\documentclass[12pt]{amsart}
\usepackage{tikz,array, verbatim}
\usepackage{amsfonts, amsmath, latexsym, epsfig, caption}
\usepackage{amssymb, color}

\usepackage{epsf}
\usepackage{array}
\usepackage{ragged2e}
\usepackage{hyperref}
\usepackage[margin=1in]{geometry}
\usepackage{longtable}

\DeclareMathOperator{\conv}{conv}

\title[On the regularity radius of Delone sets in $\R^3$]{On the regularity radius of Delone sets in $\R^3$}

\begin{document}

\author[N. Dolbilin]{Nikolay Dolbilin}
\address{Nikolay Dolbilin, Steklov Mathematical Institute, 8 Gubkina str., Moscow, 119991, Russia}
\email{dolbilin@mi-ras.ru}

\author[A. Garber]{Alexey Garber}
\address{Alexey Garber, School of Mathematical \& Statistical Sciences, The University of Texas Rio Grande Valley, 1 West University Blvd, Brownsville, TX, 78520, USA}
\email{alexeygarber@gmail.com}

\author[U. Leopold]{Undine Leopold}
\address{Undine Leopold, Department of Mathematics, Northeastern University, Boston, MA, 02115, USA}
\email{undine.leopold@googlemail.com}

\author[E. Schulte]{Egon Schulte}
\address{Egon Schulte, Department of Mathematics, Northeastern University, Boston, MA, 02115, USA}
\email{e.schulte@northeastern.edu}

\newcommand{\R}{\ensuremath{\mathbb{R}}}

\newtheorem{theorem}{Theorem}[section]
\newtheorem{proposition}[theorem]{Proposition}
\newtheorem{corollary}[theorem]{Corollary}
\newtheorem{lemma}[theorem]{Lemma}
\newtheorem{problem}[theorem]{Problem}
\newtheorem*{conjecture}{Conjecture}
\newtheorem{question}{Question}
\newtheorem{claim}{Claim}

\theoremstyle{definition}
\newtheorem{definition}[theorem]{Definition}

\theoremstyle{remark}
\newtheorem*{remark}{Remark}

\begin{abstract}
We complete the proof of the upper bound $\hat\rho_3\leq 10R$ for the regularity
radius of Delone sets in three-dimensional Euclidean space. Namely, summing up the results obtained earlier, and adding the missing cases, we show that
if all $10R$-clusters of a Delone set $X$ with parameters $(r,R)$
are equivalent, then $X$ is a regular system.
\end{abstract}

\maketitle

\section{Introduction}
\label{intro}

The standard mathematical model of an ideal crystal involves two fundamental geometric concepts:\ a uniformly distributed discrete point set (Delone set) in Euclidean $d$-space $\mathbb{R}^d$, and a crystallographic group of Euclidean isometries (space group) in $\mathbb{R}^d$ acting on this point set. An ideal crystal structure is modelled by a Delone set that can be expressed as the union of finitely many point orbits under some crystallographic group.

Understanding the appearance of a crystallographic symmetry group in an atomic structure created in a crystallization process has always been one of the fundamental problems in  crystallography. Physicists tend to explain the emergence of the space group by the fact that, in crystalline matter, local arrangements of atoms of the same kind tend to be identical (congruent). The global regularity of the crystal structure then somehow is the result of the local interaction of the atoms, as captured by the following quote from Chapter 30 of Feynman Lectures in Physics \cite{fey}:\ ``When the atoms of matter are not moving around very much, they get stuck together and arrange themselves in a configuration with as low an energy as possible. If the atoms in a certain place have found a pattern which seems to be of low energy, then the atoms somewhere else will probably make the same arrangement. For these reasons, we have in a solid material a repetitive pattern of atoms.'' Thus the emergence of space-group symmetry and periodicity in a crystal structure can be seen as resulting from the congruence of the local atomic arrangements throughout the entire crystal. 

Historically, the mathematical theory describing the relationship between the local and global structure of a crystal was initiated by Delone, Dolbilin, Shtogrin, and Galiulin in \cite{local}. The main goal of this theory is to understand in mathematical terms how congruence of local atomic arrangements forces global structural regularity or periodicity. The mathematical analysis is carried out in terms of Delone sets $X$ with parameters $(r,R)$, where (usually) $2r$ is the smallest interpoint distance of $X$, and $R$ is the radius of a largest ``empty ball'' that can be inserted into the interstices of $X$ in $\mathbb{R}^d$. There are two types of Delone sets that account for regularity or periodicity, respectively, namely a regular system (orbit of a single point under a crystallographic group) or a multi-regular system (union of finitely many point orbits under a crystallographic group). An ideal crystal is a multi-regular system. The local theory proposed in~\cite{local} aims at finding sufficient local conditions for a Delone set $X$ to be a regular system or multi-regular system. In other words, the local theory searches for local conditions on Delone sets $X$ that guarantee the emergence of a crystallographic group of symmetries producing $X$ as an orbit set consisting of a single point orbit or finitely many point orbits, respectively. 

The property whether or not a Delone set in Euclidean space is a regular system or a multi-regular system is invariant under scaling. Thus, beginning with Section~\ref{ltrr}, we will focus entirely on Delone sets with parameters $\frac12$ and $R$ (that is, with $r=\frac12$); these are the sets with distance between points at least $1$.

The {\em regularity radius\/} $\hat\rho_d$ is the smallest positive number $\rho$ with the property that each Delone set $X$ (with parameters $(r,R)$) in $\mathbb{R}^d$ with mutually equivalent point clusters (point neighborhoods) of radius $\rho$ is a regular point system. Here equivalence of clusters means congruence under a center preserving isometry. (For precise definitions see Section~\ref{ltrr}.) Thus $\hat{\rho}_d$ is defined by two properties:\ first, each Delone set $X$ with mutually equivalent point clusters of radius $\hat{\rho}_d$ is a regular system; and second, for any radius $\rho$ smaller than $\hat{\rho}_d$ there exists a Delone set with mutually equivalent clusters of radius $\rho$ which is not a regular system. A priori it is not at all obvious from the definition that this number $\hat{\rho}_d$ exists (but it does!), and how it would depend on the dimension $d$ and the parameters $(r,R)$ associated with Delone sets. The celebrated ``Local Criterion for Regular Systems'' of Delone, Dolbilin, Shtogrin \& Galiulin~\cite{local} (our Theorem~\ref{thm:local} below) implies the existence of an upper bound for $\hat{\rho_d}$; in fact, even the existence of an upper bound of the form ${\hat\rho}_d\leq c(d,R/r)R$, with some constant $c(d,R/r)$ depending on $d$ and $R/r$ (for details see also \cite{Dol2018a}). A more explicit upper bound was established in Dolbilin, Lagarias \& Senechal~\cite{DLS1998}, namely
\[ \hat{\rho_d} \leq 2R(d^2+1) \log_2(2R/r+2).\]

A main goal is to find good upper and lower estimates for the regularity radius $\hat{\rho}_d$.  For dimensions~$1$ and $2$ the exact values are known: ${\hat \rho}_1=2R$ and ${\hat\rho}_2=4R$. For $d=1$ the verification is straightforward, and for $d=2$ the proof follows from the Local Criterion as well as the general lower bound ${\hat \rho }_d\geq 4R$, $d\geq 2$,  described in \cite{Dol2015, Dol2018a, LowerBound} (or its generalization ${\hat\rho }_d\geq 2dR$, $d\geq 2$, proved in~\cite{LowerBound}).

The purpose of the present paper is to complete the proof of the new upper bound ${\hat\rho}_3\leq 10R$ in dimension~3. The long and somewhat involved history of the estimate $4R\leq {\hat\rho}_3\leq 10R$ is described in more detail in Section~\ref{hist}. The search for an upper bound started off with a simple but crucial observation by M.~Shtogrin in the 1970's, which was only published quite recently~\cite{Sto2010}. This observation limits the possible choices of finite groups of isometries (finite subgroups of the orthogonal group $\mathrm{O}(3)$) that can occur as symmetry groups of clusters of radius $2R$ ($2R$-cluster groups) in a Delone set in $\mathbb{R}^3$ with mutually equivalent $2R$-clusters. In fact, in any such group, the order of any rotation axis cannot exceed 6. Shtogrin's simple necessary condition reduces the possibilities for $2R$-cluster groups from an infinite list of finite subgroups of $\rm{O}(3)$ to a finite list which we denote by $L_6$. Quite recently, a more general result on $2R$-cluster groups in arbitrary Delone sets in $\mathbb{R}^3$ was found in~\cite{Dol2019} (Theorem~\ref{thm:rotation2} below), which immediately implies Shtogrin's result. 

In light of the Local Criterion of Theorem~\ref{thm:local}, and the fact that the tower height of each group $G$ (the number of subgroups of the longest chain of strictly nested subgroups of $G$) from $L_6$ is bounded by 6, the tower bound criterion of Theorem~\ref{thm:tower} immediately establishes the upper estimate ${\hat\rho}_3\leq 14R$. For the groups $G$ from $L_6$ with tower height at most 4, the regularity of a Delone set $X$ of the desired kind follows from the Local Criterion. In order to lower this upper bound from $14R$ to $10R$ by means of the tower bound criterion, one needs to analyze the groups $G$ from $L_6$ with tower height larger than 4. For each such group $G$, it must be proved that either $G$ cannot occur as a $2R$-cluster group in a Delone set in $\mathbb{R}^3$ with mutually equivalent $2R$-clusters, or that otherwise the mutual equivalence of $10R$-clusters in any such Delone set with $2R$-cluster group $G$ forces the regularity of $X$. 

In Section~\ref{ltrr} we review basics about Delone sets, as well as the Local Criterion and the Tower Bound. Section~\ref{2Rregcond} revisits known $2R$-regularity conditions and gives a proof of a generalization of Shtogrin's condition. In Section~\ref{S8D4d}, two particular groups are rejected as possible $2R$-cluster groups. This then enables us in Section~\ref{10rbound} and Appendix~\ref{appendix} to complete the proof. 

\subsection*{Acknowledgment}
We would like to thank the American Institute for Mathematics (AIM) for hosting a weeklong workshop in 2016 on ``Soft Packings, Nested Clusters, and Condensed Matter'', as well as the authors' ongoing SQuaRe research project on ``Delaunay Sets: Local Rules in Crystalline Structures'' that grew out of it. The present paper resulted from the discussions at the first SQuaRE meeting at AIM in November 2018. We greatly appreciated the opportunity to meet at AIM and are grateful to AIM for its hospitality. The work of E.~S. was partially supported by Simons Foundation award no.~420718. 

\section{History and status of the proof}
\label{hist}

The list $L_6$ of theoretically possible $2R$-cluster groups consists precisely of the finite subgroups of $\rm{O}(3)$ which do not contain rotations of order exceeding $6$. A detailed investigation of the groups $G$ from $L_6$ with tower height larger than 4 was carried out independently by M.~Shtogrin and N.~Dolbilin over many years, and led each, independently, to the conclusion that ${\hat\rho}_3\leq 10R$ should hold. However a complete proof of this upper estimate remained unpublished. The most significant attempt at a complete proof was made in \cite{Dol2018}. As our paper critically uses several results of \cite{Dol2018}, we briefly review the approach described there. 

In~\cite{Dol2018}, the list $L_6$ was split into the following four sublists, to some degree  overlapping.

{\it Sublist~1.\ Groups of $L_6$ which contain the central symmetry.}\  By Theorem~\ref{thm:antipodal} below (see  \cite{Dol2015,Dol2018a,DM2015}), for each such group $G$, if $X$ is any Delone set with mutually equivalent $2R$-clusters and with $2R$-cluster group $G$, then $X$ is a regular system.

{\it Sublist~2.\ Groups of $L_6$ with a rotation axis of order 6.}\  It was announced in~\cite{Sto2010} that, for each such group $G$, if $X$ is any Delone set with mutually equivalent $2R$-clusters and with $2R$-cluster group $G$, then $X$ is a regular system. A full proof is given in this paper (see Theorem~\ref{thm:n=6}).

{\it Sublist~3. Groups of $L_6$ without the central symmetry and with tower height $5$ or $6$.}\  Sublist 3 has been of particular interest, as the Local Criterion applied to these groups gives only ${\hat \rho}_3\leq 14R$ and the methods for groups containing the central symmetry do not work in this case. Sublist~3 contains, for instance, the rotational symmetry groups of the regular tetrahedron, cube, and icosahedron (of orders 12, 24, or 60), denoted $T$, $O$, or $I$, respectively, as well as the full symmetry group of the regular tetrahedron (of order 24), denoted $T$. In \cite{Dol2018}, various geometric arguments were used to deal with all groups on Sublist 3, save one (see below). Again it was shown for all these groups $G$ (save the exceptional one) that, if $X$ is a Delone set with mutually equivalent $2R$-clusters and with $2R$-cluster group $G$, then $X$ is a regular system. The discussion of the groups on Sublist~3  is the most important result of \cite{Dol2018}. From \cite{LowerBound} we know that in order to conclude regularity of a Delone set for some $2R$-cluster groups $G$, one needs to require the equivalence of at least the $6R$-clusters, even if the group has a relatively small tower height. Results of \cite{Dol2018} are of particular interest to us, as for some groups $G$ on list $L_6$, regardless of their large tower heights 5 or 6, the equivalence of $2R$-clusters already suffices to conclude regularity.     

{\it Sublist~4. Groups of $L_6$ not on Sublists 1, 2 or 3.}\ These groups do not contain the central symmetry and have no rotation axis of order 6. As their tower height is at most 4, they do not require particular attention in the proof of the $10R$-bound and hence were not considered in \cite{Dol2018}.

The exceptional group of Sublist 3 not covered by~\cite{Dol2018} is the symmetry group of the ``regular'' antiprism over the square, $D_{4d}$ (in Sch\"onflies notation -- see Section~\ref{10rbound}), which has order 16 and tower height 5. By accident this group was omitted from the analysis in~\cite{Dol2018}, as was recently noticed by A.~Garber. Thus the hoped-for completion of the proof of ${\hat\rho}_3\leq 10R$ unfortunately was not quite accomplished in~\cite{Dol2018}. 

The main goal and result of this paper is to deal with the case $D_{4d}$ and complete the proof of the estimate ${\hat\rho}_3\leq 10R$. In particular, we prove that there is no Delone set $X$ in $\mathbb{R}^3$ with mutually equivalent $2R$-clusters and with $2R$-cluster group $D_{4d}$. This closes the gap in \cite{Dol2018} and completes the proof of the $10R$ estimate. We also present a proof of the regularity of a Delone set with mutually equivalent $2R$-clusters and with a $2R$-cluster group from Sublist~2, as no full proof of this fact is available in the literature. Our approach will also settle the case of the cyclic subgroup $S_8$ of $D_{4d}$ of order 8 generated by a rotatory reflection, which has tower height $4$ and is on Sublist~4. As for $D_{4d}$, there is no Delone set $X$ in $\mathbb{R}^3$ with mutually equivalent $2R$-clusters and with $S_8$ as $2R$-cluster group.

\section{Local theory and the regularity radius}
\label{ltrr}

Points, or vectors, in $\mathbb{R}^d$ are denoted by bold-faced letters. For $\mathbf{x},\mathbf{y}\in\mathbb{R}^d$ we set $\mathbf{xy}:=\mathbf{y}-\mathbf{x}$ and write $|\mathbf{xy}|$ for the length of $\mathbf{xy}$. The line passing through two distinct points $\mathbf{x},\mathbf{y}\in\mathbb{R}^d$ is denoted by $\overline{\mathbf{xy}}$, and the line segment between $\mathbf{x}$ and $\mathbf{y}$ by $[\mathbf{x},\mathbf{y}]$. If ${\mathbf x}\in\mathbb R^d$ and $\rho\geq 0$, we let $B_{\mathbf x}(\rho)$ denote the closed $d$-ball in $\mathbb R^d$ of radius $\rho$ centered at ${\mathbf x}$.

In this section, we introduce the main concepts of the local theory of regular systems and survey the main results about the regularity radius. Although we are primarily interested in the case $d=3$, our discussion in this section is for arbitrary dimension $d$. 

\begin{definition}
\label{def1}
Let $r$ and $R$ be positive real numbers with $r\leq R$. A subset $X$ of $\mathbb{R}^d$ is called a \emph{Delone set with parameters $(r,R)$} if the following two conditions hold:
\\
(1) each open $d$-ball of radius $r$ contains at most one point of $X$;
\newline
(2) each closed $d$-ball of radius $R$ contains at least one point of $X$.
\end{definition}

In addition, we adopt the convention that in designating the parameters $(r,R)$ to a Delone set $X$ we choose the largest possible value of $r$ and the smallest possible value of $R$ that satisfy the properties of Definition~\ref{def1}. 

\begin{definition}\label{def:deloneset} A Delone set $X$ in $\mathbb{R}^d$ is called a {\em regular system\/} if its symmetry group $S(X)$ acts transitively on $X$, i.e., for any pair of points ${\mathbf x}, {\mathbf x}'$ in $X$ there exists an isometry $g\in S(X)$ such that $g({\mathbf x})={\mathbf x}'$. 
\end{definition}

Thus a regular system $X$ in $\mathbb{R}^d$ coincides with the orbit of any one of its points under its symmetry group $S(X)$. 

The property of whether or not a Delone set is a regular system is invariant under similarity transformations of $\mathbb{R}^d$. More explicitly, if $X$ is a Delone set with parameters $(r,R)$ which is a regular system, and if $g$ is a transformation of $\mathbb{R}^d$ of the form $g({\mathbf x})=\lambda\sigma({\mathbf x})+{\mathbf t}$ with $\lambda\neq 0$, $\sigma\in \mathrm O(d)$, and ${\mathbf t}\in\mathbb{R}^d$, then $g(X)$ is a Delone set with parameters $(\lambda r,\lambda R)$ which is also a regular system. Thus for the purpose of finding sufficient conditions that guarantee regularity, it suffices to consider only Delone sets with~$r=\frac12$, and we will do this from now on. 

Accordingly we have the following revised definition of a Delone set.

\begin{definition}Let $R\geq\frac12$. A subset $X$ of $\mathbb{R}^d$ is called a Delone set if the following two conditions hold:\\
(1) each open $d$-ball of radius $\frac12$ contains at most one point of $X$;
\newline
(2) each closed $d$-ball of radius $R$ contains at least one point of $X$.
\end{definition}

We shall primarily work with the second definition. In particular, unless said otherwise, we will always assume that $r=\frac12$. We also adopt a similar convention as above for the choice of the parameter $R$ (and the parameter $\frac12$). 

Given a Delone set $X$ in $\mathbb{R}^d$, a closed ball $B$ in $\mathbb{R}^d$ is called an {\em empty ball\/} of $X$ if no point of $X$ lies in the interior of $B$. Note that an empty ball of $X$ may have points of $X$ on its boundary. By our conventions, if $X$ is a Delone, then the parameter $R$ is the radius of the largest empty ball of $X$, and distances between pairs of  points of $X$ are at least $1$ and can get arbitrarily close to $1$.

In the local theory of regular systems, the key concepts are that of a cluster and its symmetry group. For a point ${\mathbf x}$ of a Delone set $X$ and for $\rho\geq 0$, we call 
\[C_{\mathbf x}(\rho):=X\cap B_{\mathbf x}(\rho)\] 
the {\em cluster of radius $\rho$} of $X$ with \emph{center} ${\mathbf x}$, or simply the {\em $\rho$-cluster of $X$ at ${\mathbf x}$}, and 
\[H_{\mathbf x}(\rho):=X\cap \partial B_{\mathbf x}(\rho)\] 
the $\rho$-{\it shell} of $X$ with \emph{center} ${\mathbf x}$. Clusters and shells of Delone sets are finite point sets. The $\rho$-clusters $C_{\mathbf x}(\rho)$ and $C_{{\mathbf x}'}(\rho)$ at two points ${\mathbf x},{\mathbf x}'$ of $X$ are said to be \emph{equivalent} if there exists an isometry $g$ of $\mathbb{R}^d$ such that $g({\mathbf x})={\mathbf x}'$ and $g(C_{\mathbf x}(\rho))=C_{{\mathbf x}'}(\rho)$. Note that equivalence of clusters is stronger than mere congruence of clusters, as the isometry $g$ furnishing the equivalence must map the center $\mathbf x$ of $C_{\mathbf x}(\rho)$ to the center ${\mathbf x}'$ of $C_{{\mathbf x}'}(\rho)$. 

Given a Delone set $X$ and $\rho\geq 0$, the set of all $\rho$-clusters of $X$ is partitioned into classes of equivalent $\rho$-clusters. If the set of equivalence classes of $\rho$-clusters is finite for every $\rho>0$, then $X$ is said to be of \emph{finite type}. It is known that if the number of equivalence classes of $2R$-clusters in a Delone set $X$ is finite, then $X$ is of finite type. For a Delone set $X$ of finite type, we let $N(\rho)$ denote the number of equivalence classes of $\rho$-clusters, and call $N$ the {\em cluster counting function\/} of $X$. Then $N$ is a non-decreasing function. Clearly, $N(\rho)=1$ whenever $\rho<1$, since then $C_{\mathbf x}(\rho)=\{\mathbf x\}$ for each ${\mathbf x}\in X$. It follows from the Local Regularity Criterion, Theorem \ref{thm:local} below, the regularity property of a Delone set $X$ can also be described in terms of its cluster counting function:\ $X$~is a regular system if and only if $N(\rho)= 1$ for each $\rho\geq 0$. 

The first major question in the local theory asks if the regularity of a Delone set can be recognized on clusters of bounded radius:\ does there exist a positive number $\rho$ with the property that each Delone set $X$ with mutually equivalent $\rho$-clusters (that is, $N(\rho) = 1$) must necessarily be a regular system. A positive answer is provided by the Local Criterion of Theorem~\ref{thm:local} and its consequences. The smallest such number $\rho$, denoted $\hat{\rho_d}=\hat{\rho_d}(R)$, is called the {\em regularity radius\/} and a priori depends on the dimension $d$ and the parameter $R$.

The Local Criterion requires the notion of a cluster group. The \emph{cluster group\/} $S_{\mathbf x}(\rho)$ of a $\rho$-cluster $C_{\mathbf x}(\rho)$, or simply the \emph{$\rho$-cluster group\/} $S_{\mathbf x}(\rho)$ at $\mathbf x$, in a Delone set $X$ is defined as the stabilizer of $\mathbf x$ in the full symmetry group of $C_{\mathbf x}(\rho)$. Thus $S_{\mathbf x}(\rho)$ consists of all isometries $g$ of $\mathbb{R}^d$ such that $g({\mathbf x})={\mathbf x}$ and $g(C_{\mathbf x}(\rho))=C_{\mathbf x}(\rho)$. Clearly, if $\rho< 1$, then $C_{\mathbf x}(\rho)=\{\mathbf x\}$ and thus $S_{\mathbf x}(\rho)$ is isomorphic to the full orthogonal group $\rm{O}(d)$ of~$\mathbb{R}^d$. More generally, if the affine hull of a cluster has dimension at most $d-2$, then its cluster group (consisting of $d$-dimensional isometries) is necessarily infinite. However, if the affine hull of a cluster has dimension at least $d-1$, then the cluster group is a finite group, since clusters are finite. As shown in \cite{local}, in a Delone set $X$, the $\rho$-clusters for $\rho\geq 2R$ are $d$-dimensional and thus have finite cluster groups. The cluster groups for a given $X$ are non-increasing as the radius $\rho$ grows, that is, $S_{\mathbf x}(\rho)\supseteq S_{\mathbf x}(\rho')$ whenever $\rho\leq\rho'$, and so in particular, $S_{\mathbf x}(\rho)$ is a subgroup of the finite group $S_{\mathbf x}(2R)$ if $\rho\geq 2R$. 

Observe also that the cluster groups of any two equivalent $\rho$-clusters are conjugate subgroups of $\rm{Iso}(d)$, the group of all isometries of $\mathbb{R}^d$. In the presence of mutual $\rho$-cluster equivalence we often de-emphasize the center $\mathbf x$ in the notation for a cluster and simply write $S_X(\rho)$ instead of $S_{\mathbf x}(\rho)$ for the cluster group, with the understanding that $S_X(\rho)$ is only defined up to conjugacy in $\rm{Iso}(d)$.

\begin{theorem}
[Local Regularity Criterion, \cite{local}]\label{thm:local} 
A Delone set $X$ in $\mathbb{R}^d$ is a regular system if and only if $X$ satisfies the following two conditions for some $\rho_0>0$ and some ${\mathbf x}_0\in X$:
$$N(\rho_0 + 2R) = 1, \eqno(2.1)$$
$$S_{{\mathbf x}_{0}}(\rho_0) = S_{{\mathbf x}_{0}}(\rho_0 + 2R). \eqno(2.2)$$
Moreover, in this situation, $S_{\mathbf x}(\rho_0) = S_{\mathbf x}(\rho_0 + 2R)$ for all ${\mathbf x}\in X$; the cluster groups stabilize at radius $\rho_0$, that is, $S_{\mathbf x}(\rho) = S_{\mathbf x}(\rho_0)$ for all ${\mathbf x}\in X$ and all $\rho\geq\rho_0$; and the regular system $X$ is uniquely determined by the cluster $C_{{\mathbf x}_{0}}(\rho_0 + 2R)$.
\end{theorem}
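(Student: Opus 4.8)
The plan is to prove the two implications separately; necessity is routine and the substance of sufficiency is concentrated in a single ``propagation'' lemma. For necessity, if $X$ is a regular system then any $g\in S(X)$ with $g(\mathbf{x})=\mathbf{x}'$ carries $C_{\mathbf{x}}(\rho)=X\cap B_{\mathbf{x}}(\rho)$ onto $C_{\mathbf{x}'}(\rho)$, so $N(\rho)=1$ for all $\rho$; and since clusters of radius $\ge 2R$ are $d$-dimensional, the groups $S_{\mathbf{x}_0}(\rho)$ are finite and non-increasing in $\rho$, hence constant for $\rho$ beyond some $\rho_0\ge 2R$, which makes $(\rho_0,\mathbf{x}_0)$ witness (2.1)--(2.2). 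For sufficiency, assume (2.1)--(2.2) at $(\rho_0,\mathbf{x}_0)$. Since $N(\rho_0)\le N(\rho_0+2R)=1$ and (2.2) forces $S_{\mathbf{x}_0}(\rho_0)$ to be finite (a lower-dimensional $\rho_0$-cluster would have infinite cluster group, but $S_{\mathbf{x}_0}(\rho_0+2R)$ is finite), conjugacy of cluster groups at equivalent clusters propagates (2.2) to all points: $S_{\mathbf{x}}(\rho_0)=S_{\mathbf{x}}(\rho_0+2R)$ for every $\mathbf{x}\in X$ (this already gives the first ``moreover'' assertion). I would then record the Extension Lemma: if $\mathbf{a},\mathbf{b}\in X$ and an isometry $g$ has $g(\mathbf{a})=\mathbf{b}$ and $g(C_{\mathbf{a}}(\rho_0))=C_{\mathbf{b}}(\rho_0)$, then automatically $g(C_{\mathbf{a}}(\rho_0+2R))=C_{\mathbf{b}}(\rho_0+2R)$ --- choose by (2.1) some $\phi$ with $\phi(\mathbf{a})=\mathbf{b}$ and $\phi(C_{\mathbf{a}}(\rho_0+2R))=C_{\mathbf{b}}(\rho_0+2R)$, note $\phi^{-1}g\in S_{\mathbf{a}}(\rho_0)=S_{\mathbf{a}}(\rho_0+2R)$, and conclude. (Call an isometry with $g(\mathbf{a})=\mathbf{b}$ and $g(C_{\mathbf{a}}(\rho))=C_{\mathbf{b}}(\rho)$ a $\rho$-match from $\mathbf{a}$ to $\mathbf{b}$; the Extension Lemma says a $\rho_0$-match is automatically a $(\rho_0+2R)$-match.)

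The crux is the Propagation Lemma: any $(\rho_0+2R)$-match $g$ from some $\mathbf{a}\in X$ to some $\mathbf{b}\in X$ satisfies $g(X)=X$. I would fix an arbitrary $\mathbf{u}\in X$ and a chain $\mathbf{a}=\mathbf{v}_0,\mathbf{v}_1,\dots,\mathbf{v}_k=\mathbf{u}$ in $X$ with $|\mathbf{v}_j\mathbf{v}_{j+1}|\le 2R$ for all $j$ --- such a chain exists because the graph on $X$ joining points at distance $\le 2R$ is connected, an elementary property of Delone sets with parameter $R$ --- and prove by induction on $j$ that $g(\mathbf{v}_j)\in X$ and $g$ is a $(\rho_0+2R)$-match from $\mathbf{v}_j$ to $g(\mathbf{v}_j)$. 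For the step, $|\mathbf{v}_j\mathbf{v}_{j+1}|\le 2R\le\rho_0+2R$ puts $\mathbf{v}_{j+1}$ in $C_{\mathbf{v}_j}(\rho_0+2R)$, so $g(\mathbf{v}_{j+1})\in C_{g(\mathbf{v}_j)}(\rho_0+2R)\subseteq X$; and $|\mathbf{v}_j\mathbf{v}_{j+1}|\le 2R$ also gives $C_{\mathbf{v}_{j+1}}(\rho_0)\subseteq C_{\mathbf{v}_j}(\rho_0+2R)$, so $g$ carries $C_{\mathbf{v}_{j+1}}(\rho_0)$ into, and (comparing cardinalities via $N(\rho_0)=1$) onto, $C_{g(\mathbf{v}_{j+1})}(\rho_0)$; the Extension Lemma centered at $\mathbf{v}_{j+1}$ then improves this $\rho_0$-match to a $(\rho_0+2R)$-match, closing the induction. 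Hence $g(\mathbf{u})\in X$ for every $\mathbf{u}\in X$, so $g(X)\subseteq X$; applying the same to $g^{-1}$, a $(\rho_0+2R)$-match from $\mathbf{b}$ to $\mathbf{a}$, gives $g(X)=X$, i.e.\ $g\in S(X)$. Now for any $\mathbf{x},\mathbf{y}\in X$, (2.1) supplies a $(\rho_0+2R)$-match $g$ from $\mathbf{x}$ to $\mathbf{y}$; by the lemma $g\in S(X)$, so $S(X)$ acts transitively and $X$ is a regular system.

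The remaining ``moreover'' statements follow from the same machinery. The Propagation Lemma with $\mathbf{a}=\mathbf{b}=\mathbf{x}$ shows that $S_{\mathbf{x}}(\rho_0+2R)$ equals the stabilizer of $\mathbf{x}$ in $S(X)$, so with (2.2) and the monotonicity $S_{\mathbf{x}}(\rho)\supseteq S_{\mathbf{x}}(\rho')$ a sandwich gives $S_{\mathbf{x}}(\rho)=S_{\mathbf{x}}(\rho_0)$ for all $\mathbf{x}\in X$ and all $\rho\ge\rho_0$. For uniqueness, one runs the inductive reconstruction of the Propagation Lemma starting from the data $C_{\mathbf{x}_0}(\rho_0+2R)$, using (2.2) to check that for $\mathbf{b}\in C_{\mathbf{a}}(2R)$ the cluster $C_{\mathbf{b}}(\rho_0+2R)$ is the \emph{unique} set equivalent (center to center) to $C_{\mathbf{x}_0}(\rho_0+2R)$ that agrees with $C_{\mathbf{a}}(\rho_0+2R)$ on $B_{\mathbf{b}}(\rho_0)$; thus each new cluster, and hence all of $X$, is forced by $C_{\mathbf{x}_0}(\rho_0+2R)$. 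The main obstacle is the Propagation Lemma, and specifically the temptation to think one must control $g$ on clusters of radius larger than $\rho_0+2R$ about distant points: the point is that one never does, because a step of length $\le 2R$ makes $C_{\mathbf{v}_{j+1}}(\rho_0)$ lie inside the already-controlled $C_{\mathbf{v}_j}(\rho_0+2R)$, and the Extension Lemma --- which is exactly where hypothesis (2.2) enters --- then refreshes the resulting $\rho_0$-match back up to a $(\rho_0+2R)$-match at the new vertex. (A minor subtlety, already dealt with above: (2.2) is also what rules out $\rho_0$-clusters that are lower-dimensional and have infinite cluster group.)
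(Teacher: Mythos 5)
The paper does not prove this theorem at all: it is quoted from Delone--Dolbilin--Shtogrin--Galiulin \cite{local} as a known result, so there is no in-paper argument to compare against. Your proof is a correct, self-contained reconstruction of the classical argument behind the Local Criterion: necessity via stabilization of the non-increasing chain of finite cluster groups, and sufficiency via the extension step (where hypothesis (2.2) converts a $\rho_0$-match into a $(\rho_0+2R)$-match) propagated along $2R$-chains to show that every $(\rho_0+2R)$-match is a global symmetry. The only ingredient you invoke without proof is the connectivity of the graph on $X$ joining points at distance at most $2R$; this is indeed a standard fact (it follows from the connectedness of $\mathbb{R}^d$, since otherwise the closed $R$-neighborhoods of the two parts would be disjoint nonempty closed sets covering $\mathbb{R}^d$), and citing it is reasonable. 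The remaining details --- the conjugacy argument transporting (2.2) from $\mathbf{x}_0$ to every point, the sandwich for stabilization of cluster groups at radius $\rho_0$, and the uniqueness of the reconstruction forced by $S_{\mathbf{x}_0}(\rho_0)=S_{\mathbf{x}_0}(\rho_0+2R)$ --- all check out.
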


Theorem~\ref{thm:local} immediately implies the following Theorem~\ref{thm:asym} which deals with a special case.  Historically, however, Theorem~\ref{thm:asym} has preceded Theorem~\ref{thm:local} and has inspired it. Call a Delone set $X$ {\em locally asymmetric\/} if the cluster group $S_{\mathbf x}(2R)$ is trivial for each ${\mathbf x}\in X$. 

\begin{theorem}\label{thm:asym} 
If $X$ is a locally asymmetric Delone set in $\mathbb{R}^d$ with mutually equivalent $4R$-clusters, then $X$ is a regular system.
\end{theorem}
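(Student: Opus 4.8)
The plan is to derive Theorem~\ref{thm:asym} as a direct consequence of the Local Regularity Criterion (Theorem~\ref{thm:local}), by choosing the right value of the free parameter $\rho_0$. Since $X$ is locally asymmetric, the cluster group $S_{\mathbf x}(2R)$ is trivial for every ${\mathbf x}\in X$. The idea is to apply Theorem~\ref{thm:local} with $\rho_0=2R$ and any fixed ${\mathbf x}_0\in X$: then condition (2.1) becomes $N(4R)=1$, which holds precisely because $X$ has mutually equivalent $4R$-clusters; and condition (2.2) becomes $S_{{\mathbf x}_0}(2R)=S_{{\mathbf x}_0}(4R)$.

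First I would observe that the cluster groups are nested and non-increasing in the radius: $S_{{\mathbf x}_0}(2R)\supseteq S_{{\mathbf x}_0}(4R)$, as recorded in the excerpt. Combined with the hypothesis that $S_{{\mathbf x}_0}(2R)$ is trivial, this forces $S_{{\mathbf x}_0}(4R)$ to be trivial as well, and hence $S_{{\mathbf x}_0}(2R)=S_{{\mathbf x}_0}(4R)$, so (2.2) holds automatically. At this point both hypotheses of Theorem~\ref{thm:local} are satisfied with $\rho_0=2R$ and $\rho_0+2R=4R$, and we conclude that $X$ is a regular system.

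One small point worth spelling out is why $N(4R)=1$ is exactly the stated hypothesis. The phrase ``mutually equivalent $4R$-clusters'' means all $4R$-clusters $C_{\mathbf x}(4R)$, ${\mathbf x}\in X$, lie in a single equivalence class under center-preserving isometries, which is the definition of $N(4R)=1$. I would also note, though it is not strictly needed, that $X$ is automatically of finite type here: $N(4R)=1$ is finite, and as recalled in the excerpt, finiteness of the number of equivalence classes of $2R$-clusters (a fortiori $4R$-clusters, since $N$ is non-decreasing and $N(4R)=1$ gives $N(2R)=1$) implies $X$ is of finite type, so the cluster counting function is well-defined.

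There is really no main obstacle here: the entire content is packaged in Theorem~\ref{thm:local}, and the locally asymmetric hypothesis is tailored precisely so that the second condition (2.2) of the Local Criterion is free. If anything, the only thing to be careful about is that the choice $\rho_0=2R$ is legitimate, i.e. that $2R>0$ and that the cluster $C_{{\mathbf x}_0}(\rho_0)$ is already full-dimensional so that $S_{{\mathbf x}_0}(\rho_0)$ is finite --- but this is guaranteed by the remark in the excerpt that $\rho$-clusters for $\rho\geq 2R$ are $d$-dimensional with finite cluster groups. Hence the proof is a two-line deduction from Theorem~\ref{thm:local}.
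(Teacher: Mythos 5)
Your proof is correct and is exactly the argument the paper intends: the paper states that Theorem~\ref{thm:asym} follows immediately from Theorem~\ref{thm:local}, and your choice of $\rho_0=2R$, with (2.2) holding automatically because the trivial group $S_{{\mathbf x}_0}(2R)$ forces $S_{{\mathbf x}_0}(4R)$ to be trivial as well, is precisely that deduction.
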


For a finite group $G$, a finite chain of distinct nested subgroups,
$$G:= G_1\supset G_2\supset\ldots\supset G_{m-1}\supset G_m = \{1\},$$
is called a {\em tower\/} of $G$. Note that towers of $G$ begin with $G$ and end with the trivial group. The number $m$ of subgroups in the tower is called the (\emph{tower}) \emph{height} of the tower. By $m(G)$ we denote the maximal height of a tower of $G$, and note that $G$ may have several towers of height $m(G)$. It is clear that $m(G)\leq \Omega +1\leq \log_2 2|G|$, where $\Omega=\Omega(|G|)$ denotes the total number of prime factors of $|G|$ (counted with multiplicities). 

The next theorem is a reformulation of \cite[Prop. 2.1]{Dol2018}. It expresses the regularity of a Delone set in terms of the prime factorization of the order of its $2R$-cluster group, and thus provides an important tool for finding bounds for the regularity radius.

\begin{theorem}
[Tower Bound]\label{thm:tower}
Let $X$ be a Delone set in $\mathbb{R}^d$ with mutually equivalent $2R$-clusters, and suppose $\Omega$ is the number of prime factors of the order $|S_{X}(2R)|$ (counted with multiplicities). If all $2(\Omega +2)R$-clusters of $X$ are mutually equivalent, then $X$ is a regular system.
\end{theorem}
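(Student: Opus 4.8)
The plan is to derive the Tower Bound from the Local Criterion (Theorem~\ref{thm:local}) by locating a radius $\rho_0$ at which the cluster groups stabilize, using the finiteness of $S_X(2R)$ together with the assumed equivalence of clusters of radius $2(\Omega+2)R$. The key observation is that, for $X$ with mutually equivalent $2R$-clusters, passing from radius $\rho$ to $\rho+2R$ can only cause the cluster group to shrink: we have the descending chain
\[
S_X(2R)\supseteq S_X(4R)\supseteq S_X(6R)\supseteq\cdots\supseteq S_X(2kR)\supseteq\cdots,
\]
where $S_X(2R)$ is finite. I would first note that this chain cannot strictly decrease more than $m(S_X(2R))-1$ times before it stabilizes, and that $m(S_X(2R))-1\le\Omega$, since the height of any tower of a finite group $G$ is at most $\Omega(|G|)+1$. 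Hence among the first $\Omega+1$ groups in the list, i.e.\ $S_X(2R),S_X(4R),\ldots,S_X(2(\Omega+1)R)$, there must be two consecutive equal ones: there exists $k\in\{1,\ldots,\Omega\}$ with $S_X(2kR)=S_X(2(k+1)R)=S_X(2(k+1)R+2R\cdot 0)$—more precisely $S_X(2kR)=S_X(2kR+2R)$.

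Next I would set $\rho_0:=2kR$ for this value of $k$. Then condition (2.2) of Theorem~\ref{thm:local} holds at $\mathbf{x}_0$ for any chosen $\mathbf x_0\in X$ (note $S_X(\rho)$ here is the cluster group at a fixed center, well defined up to conjugacy, and the equality of the two groups is preserved at each actual center once $X$ has mutually equivalent $2R$-clusters). For condition (2.1) I need $N(\rho_0+2R)=N(2(k+1)R)=1$; since $k\le\Omega$ we have $\rho_0+2R=2(k+1)R\le 2(\Omega+1)R\le 2(\Omega+2)R$, and because the cluster counting function $N$ is non-decreasing and $N(2(\Omega+2)R)=1$ by hypothesis, it follows that $N(2(k+1)R)=1$ as well. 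Thus both hypotheses of the Local Criterion are satisfied with $\rho_0=2kR$, and Theorem~\ref{thm:local} yields that $X$ is a regular system.

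The one genuinely delicate point — and the step I would expect to require the most care — is the claim that once $X$ has mutually equivalent $2R$-clusters, the single equality $S_{\mathbf x_0}(\rho_0)=S_{\mathbf x_0}(\rho_0+2R)$ at one center (together with $N(\rho_0+2R)=1$) really does propagate, so that the hypothesis "$S_X(\rho)$'' is meaningful as a conjugacy class and the descending-chain argument is legitimate. This is exactly what the ``Moreover'' clause of Theorem~\ref{thm:local} guarantees a posteriori, but to run the counting argument beforehand one should observe that equivalence of all $2\rho$-clusters forces all the cluster groups $S_{\mathbf x}(\rho)$ to be mutually conjugate for each fixed $\rho\le \Omega+2)\cdot 2R$ (here I abuse notation; I mean for each $\rho$ with $\rho+2R\le 2(\Omega+2)R$, using that equivalence of $2(\Omega+2)R$-clusters implies equivalence of all smaller clusters), so that ``$|S_X(\rho)|$'' and the inclusion relations between consecutive groups are well defined and independent of the center. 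Granting that, the remainder is the elementary tower/prime-factor bookkeeping sketched above, and the result follows. I would also remark that the bound $\Omega+2$ (rather than $\Omega+1$) in the radius is what provides the ``slack'' ensuring $\rho_0+2R$ stays within the range where $N=1$; this is the reformulation of \cite[Prop.~2.1]{Dol2018}.
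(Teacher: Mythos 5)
Your overall strategy is exactly the paper's: form the descending chain of cluster groups at radii $2R, 4R, 6R, \ldots$, use the prime factorization of $|S_X(2R)|$ to force a coincidence of two consecutive groups, and feed that coincidence into the Local Criterion. However, there is a genuine off-by-one error in the pigeonhole step. You claim that among the first $\Omega+1$ groups $S_X(2R), S_X(4R), \ldots, S_X(2(\Omega+1)R)$ two consecutive ones must coincide, i.e.\ that $k\le\Omega$. This does not follow from your own bound: those $\Omega+1$ groups contain only $\Omega$ consecutive pairs, and a chain with $\Omega$ strict inclusions is perfectly compatible with $\Omega(|S_X(2R)|)=\Omega$ (each strict inclusion need only contribute one prime to the index). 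Concretely, if $S_X(2R)\cong C_2$ so that $\Omega=1$, nothing prevents $S_X(2R)\supset S_X(4R)=\{1\}$, in which case no coincidence occurs among the first $\Omega+1=2$ groups and the first guaranteed equality is $S_X(4R)=S_X(6R)$, i.e.\ $j_0=\Omega+1$. Note also that if your claim were correct, your argument would prove the stronger statement that $N(2(\Omega+1)R)=1$ already implies regularity, which contradicts your own (correct) closing remark that the $\Omega+2$ in the hypothesis is the slack that makes the argument close.

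The repair is immediate and is what the paper does: extend the chain one more step to include $S_X(2(\Omega+2)R)$, giving $\Omega+2$ groups and hence $\Omega+1$ consecutive pairs; if all were strict, $|S_X(2R)|$ would have at least $\Omega+1$ prime factors, a contradiction. So some $j_0\in\{1,\ldots,\Omega+1\}$ satisfies $S_{\mathbf x}(2j_0R)=S_{\mathbf x}(2(j_0+1)R)$, and since $2(j_0+1)R\le 2(\Omega+2)R$ the hypothesis still gives $N(2(j_0+1)R)=1$, so the Local Criterion applies with $\rho_0=2j_0R$. Your remaining points (that equivalence of $2(\Omega+2)R$-clusters implies equivalence of all smaller clusters, so the groups $S_{\mathbf x}(\rho)$ are well defined up to conjugacy and the chain can be run at a single fixed center) are correct and are handled the same way in the paper, which simply fixes one ${\mathbf x}\in X$ throughout.
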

\begin{remark}
In terms of the highest tower for $G=S_X(2R)$, the theorem can be reformulated as follows. If all $2(m(G)+1)R$-clusters of $X$ are mutually equivalent, then $X$ is a regular set.
\end{remark}

\begin{proof}
Suppose all $2(\Omega+2)R$-clusters of $X$ are mutually equivalent, and let ${\mathbf x}\in X$. Then $S_X(\rho)=S_{\mathbf x}(\rho)$ for all $\rho\leq 2(\Omega +2)R$. Consider the chain of embedded clusters, 
$$C_{\mathbf x}(2R)\subset C_{\mathbf x}(4R)\subset C_{\mathbf x}(6R)\subset \ldots \subset C_{\mathbf x}(2(\Omega+1)R)\subset C_{\mathbf x}(2(\Omega+2)R),$$
as well as the corresponding chain of cluster groups,
$$S_{\mathbf x}(2R)\supseteq S_{\mathbf x}(4R)\supseteq S_{\mathbf x}(6R)\supseteq \ldots \supseteq S_{\mathbf x}(2(\Omega+1)R)\supseteq S_{\mathbf x}(2(\Omega+2)R),$$
where here some pairs of consecutive groups may coincide. Now, if all cluster groups in this chain are distinct, then the index of each cluster group in the preceding cluster group is strictly larger than~1 and therefore $|S_{\mathbf x}(2R)|$ is a product of at least $\Omega+1$ prime numbers, contrary to our assumption. Thus at least one pair of consecutive cluster groups in the chain must coincide, that is, $S_{\mathbf x}(2j_0R)= S_{\mathbf x}(2(j_0+1)R)$ for some $j_0$. But then Theorem~\ref{thm:local}, applied with $\rho_{0}=2j_0R$, shows that $X$ is regular. 
\end{proof} 

Further, we require the following theorem on locally antipodal sets which is interesting and important in its own right. We call $X$ \textit{locally antipodal} if, for each ${\mathbf x}\in X$, 
the $2R$-cluster $C_{\mathbf x}(2R)$ is centrally symmetric with respect to ${\mathbf x}$, i.e. the cluster group $S_{\mathbf x}(2R)$ contains the central symmetry with respect to ${\mathbf x}$.

\begin{theorem}[\cite{Dol2015}, see also \cite{DM2015}]\label{thm:antipodal}
If $X$ is a locally antipodal Delone set in $\mathbb{R}^d$, then $X$ is a regular system.
\end{theorem}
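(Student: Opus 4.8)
The plan is to reduce the claim to the Local Regularity Criterion (Theorem~\ref{thm:local}). What makes this delicate, compared with the way that criterion gets used in the rest of the paper, is that here we are \emph{not} handed the mutual equivalence of clusters; all we have is the purely local datum that, writing $\iota_{\mathbf{x}}\colon \mathbf{y}\mapsto 2\mathbf{x}-\mathbf{y}$ for the point reflection in $\mathbf{x}$, the isometry $\iota_{\mathbf{x}}$ maps the single cluster $C_{\mathbf{x}}(2R)$ onto itself, for every $\mathbf{x}\in X$. The first and principal step is to promote this to a global statement: each $\iota_{\mathbf{x}}$ is a symmetry of all of $X$. Since $X$ has covering radius $R$, any two of its points are joined by a \emph{$2R$-chain}, a sequence $\mathbf{x}=\mathbf{w}_0,\mathbf{w}_1,\dots,\mathbf{w}_k=\mathbf{z}$ of points of $X$ with $|\mathbf{w}_i\mathbf{w}_{i+1}|\le 2R$ for each $i$. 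I would show $\iota_{\mathbf{x}}(\mathbf{z})\in X$ by induction on $k$: the case $k\le 1$ is exactly local antipodality at $\mathbf{x}$, and in the inductive step I would exploit that $\mathbf{w}_1$ also carries a centrally symmetric $2R$-cluster and that $\iota_{\mathbf{x}}\iota_{\mathbf{w}_1}$ is a translation, so as to carry the antipodal relation already established on $C_{\mathbf{w}_1}(2R)$ one $2R$-shell farther out from $\mathbf{x}$, and then iterate along the chain. Making this propagation rigorous — in particular controlling precisely which points lie inside the clusters on which central symmetry is known, and handling the borderline configurations where distances equal $2R$ — is the technical heart of the proof and, I expect, the main obstacle.

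Granting this, the group $\Gamma:=\langle\,\iota_{\mathbf{x}}:\mathbf{x}\in X\,\rangle$ lies inside $S(X)$; being generated by point reflections, every element of $\Gamma$ is a translation or a point reflection, and $\Gamma$ contains all translations by $2(\mathbf{x}-\mathbf{y})$ with $\mathbf{x},\mathbf{y}\in X$. Since $X$ is relatively dense and uniformly discrete, these translations span a full-rank lattice $L$, so $\Gamma$ is a crystallographic group and $X$ is a crystal. Moreover each cluster $C_{\mathbf{x}}(\rho)$ is a union of spheres centered at $\mathbf{x}$, hence invariant under $\iota_{\mathbf{x}}$, so $\{1,\iota_{\mathbf{x}}\}\subseteq S_{\mathbf{x}}(\rho)$ for \emph{every} $\rho$; consequently the nonincreasing chain $S_{\mathbf{x}}(2R)\supseteq S_{\mathbf{x}}(4R)\supseteq\cdots$ stabilizes from some radius $\rho_0$ onward, i.e.\ $S_{\mathbf{x}}(\rho)=S_{\mathbf{x}}(\rho_0)$ for all $\rho\ge\rho_0$.

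It then remains to secure condition $(2.1)$ of Theorem~\ref{thm:local}, namely $N(\rho_0+2R)=1$. I would obtain this by combining the propagation of the first step with the homogeneity that $\Gamma$ imposes on $X$: transporting clusters by suitable compositions of point reflections along $2R$-chains matches $C_{\mathbf{x}}(\rho)$ with $C_{\mathbf{x}'}(\rho)$ for arbitrary $\mathbf{x},\mathbf{x}'\in X$ and all $\rho$, so that $N(\rho)=1$ identically. With $N(\rho_0+2R)=1$ and $S_{\mathbf{x}_0}(\rho_0)=S_{\mathbf{x}_0}(\rho_0+2R)$ in hand, Theorem~\ref{thm:local} forces $X$ to be a regular system. (I would further expect that a quantitative form of the propagation bounds $\rho_0$ in terms of $R$ alone, which fits the picture of the Tower Bound, Theorem~\ref{thm:tower}.)
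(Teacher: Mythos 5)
The paper itself gives no proof of this theorem---it is imported from \cite{Dol2015,DM2015}---so your proposal has to stand on its own, and it does not. The first problem is incompleteness: the step you single out as ``the technical heart'', namely promoting the hypothesis that $\iota_{\mathbf x}$ preserves the single cluster $C_{\mathbf x}(2R)$ to the statement that $\iota_{\mathbf x}$ preserves all of $X$, is exactly the main theorem of \cite{DM2015}, and you do not prove it. The induction you sketch is not routine: at the moment you wish to use $\iota_{\mathbf x}\iota_{\mathbf w_1}$, the reflection $\iota_{\mathbf w_1}$ is only known to preserve $C_{\mathbf w_1}(2R)$, so this composite is a translation not yet known to preserve anything relevant, and the bookkeeping of exactly which points are certified to have antipodes is where all the work lies. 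Deferring that step is deferring the proof.

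The second problem is a genuine error in your last step. Even granting that every $\iota_{\mathbf y}$ with $\mathbf y\in X$ is a symmetry of $X$, the group $\Gamma$ they generate has translation subgroup $L=\langle 2(\mathbf u-\mathbf v):\mathbf u,\mathbf v\in X\rangle$, and every point reflection in $\Gamma$ has its center in $\mathbf y_0+\tfrac12L$ for any fixed $\mathbf y_0\in X$; hence the $\Gamma$-orbit of $\mathbf x$ is exactly $\mathbf x+L$, which is in general a proper subset of $X$ (already for $X=\mathbb Z$ the orbit of $0$ is $2\mathbb Z$). So ``transporting clusters by compositions of point reflections'' cannot match $C_{\mathbf x}(\rho)$ with $C_{\mathbf x'}(\rho)$ for arbitrary $\mathbf x'\in X$, and $N(\rho)=1$ does not follow. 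Moreover, no repair is possible without invoking an equivalence hypothesis on the $2R$-clusters: the set $X=10\mathbb Z^2\cup\bigl(10\mathbb Z^2+(5,0)\bigr)\cup\bigl(10\mathbb Z^2+(0,5)\bigr)$ is a Delone set that is centrally symmetric about each of its points (hence locally antipodal for every radius), yet it is not a regular system, since a point of $10\mathbb Z^2$ has four neighbors at the minimal distance $5$ while a point of $10\mathbb Z^2+(5,0)$ has only two. In other words, the theorem requires---and in \cite{DM2015}, as well as in every application made in this paper, actually has---the additional hypothesis $N(2R)=1$, and any correct proof must use it; your argument never does.
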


\section{$2R$-regularity conditions for dimension $3$}
\label{2Rregcond}

In this section, we survey some known results about local conditions for the regularity of a Delone set in $\mathbb{R}^3$. Throughout, we assume that $X$ is a Delone set in $\mathbb{R}^3$ with mutually equivalent $2R$-clusters, i.e. $N(2R)=1$. Then any two $2R$-cluster groups are conjugate in $\rm{Iso}(3)$, and each $2R$-cluster group is conjugate in $\rm{Iso}(3)$ to a finite subgroup of $\rm{O}(3)$. The finite subgroups of $\rm{O}(3)$ are well-known and are described in Section~\ref{10rbound} using Sch\"onflies notation.  
\medskip

The following Theorem~\ref{thm:rotation} selects, from the infinite list of finite subgroups of $\rm{O}(3)$, a finite sublist, denoted $L_6$, of possible groups which can occur as $2R$-cluster groups in a Delone set with mutually equivalent $2R$-clusters. Then, in conjunction with Theorem~\ref{thm:tower}, inspection of the groups on $L_6$ immediately gives the upper estimate ${\hat \rho}_3\leq 14R$. Our goal is to reduce the upper bound to $10R$. 

\begin{theorem}[Shtogrin]
\label{thm:rotation} 
Suppose that $X$ is a Delone set in $\mathbb{R}^3$ with $N(2R)=1$. If $n$ is the order of a rotation in $S_{X}(2R)$, then $n\leq 6$. 
\end{theorem}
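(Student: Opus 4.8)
The plan is to show that if $X$ is a Delone set in $\mathbb{R}^3$ with $N(2R)=1$ and some $2R$-cluster group $S_{\mathbf x}(2R)$ contains a rotation $g$ of order $n$, then $n\le 6$. The key object is the axis $\ell$ of $g$ through $\mathbf x$. Since $g$ fixes $\ell$ pointwise and permutes the points of $X$ on any sphere $\partial B_{\mathbf x}(\rho)$, it permutes the points of the cluster $C_{\mathbf x}(2R)$; in particular, the orbit of any point $\mathbf y\in C_{\mathbf x}(2R)$ not on $\ell$ consists of $n$ points lying on a circle perpendicular to $\ell$, at the vertices of a regular $n$-gon. Because interpoint distances in $X$ are at least $1$, the side length of such an $n$-gon is at least $1$, which bounds the radius of the circle from below by $\frac{1}{2\sin(\pi/n)}$; this is large when $n$ is large, but by itself only forces the orbit to be far from $\ell$, not a contradiction. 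So the real work is to produce a point of $X$ close to $\ell$ whose $g$-orbit is then forced to be too dense.

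First I would locate a point of $X$ near the axis. Consider a point $\mathbf z$ on $\ell$ at distance exactly $R$ from $\mathbf x$ (so $\mathbf z\in B_{\mathbf x}(2R)$, in fact well inside). By the covering property (2) of a Delone set, the closed ball $B_{\mathbf z}(R)$ contains a point $\mathbf w\in X$; then $|\mathbf x\mathbf w|\le |\mathbf x\mathbf z|+|\mathbf z\mathbf w|\le 2R$, so $\mathbf w\in C_{\mathbf x}(2R)$, and moreover the distance from $\mathbf w$ to the axis $\ell$ is at most $|\mathbf z\mathbf w|\le R$. If $\mathbf w$ happens to lie on $\ell$ we repeat the argument with a different point on $\ell$ (using that the points of $X$ on $\ell$ are discrete with gaps $\ge 1$, we can always find an empty sub-ball of radius close to $R$ centered at a point of $\ell$ whose covering point is off the axis). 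Thus we may assume $\mathbf w\in C_{\mathbf x}(2R)$ with $0<\operatorname{dist}(\mathbf w,\ell)=:d\le R$.

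Now apply $g$: the orbit $\mathbf w, g\mathbf w,\dots,g^{n-1}\mathbf w$ consists of $n$ points on a circle of radius $d\le R$ in a plane orthogonal to $\ell$, forming a regular $n$-gon. Consecutive vertices are at distance $2d\sin(\pi/n)\le 2R\sin(\pi/n)$. Since all these points lie in $X$, this distance must be at least $1$, giving $2R\sin(\pi/n)\ge 1$. This is an inequality relating $n$ and $R/r$ (here $r=\tfrac12$), but it does not immediately cap $n$ at $6$ for all $R$ — I expect the genuine argument must instead compare two nested shells. The sharper route: pick $\mathbf w$ as above but now exploit that $R$ is the radius of the \emph{largest} empty ball, i.e. property used with the correct constant, OR alternatively produce \emph{two} orbit-circles at different heights along $\ell$ and force a short distance between points on consecutive circles. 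The cleanest version is to take $\mathbf z$ on $\ell$ at the right distance so that the covering point $\mathbf w$ has $\operatorname{dist}(\mathbf w,\ell)\le R$ and also $|\mathbf w\mathbf w'|<1$ for two orbit points unless $n\le 6$; tracking the geometry of the regular $n$-gon of circumradius $\le R$ inscribed in an empty-ball configuration yields $2R\sin(\pi/n)<1$, equivalently $\sin(\pi/n)<\tfrac{1}{2R}$, and combined with the definition of $R$ (which for a set with minimal distance $1$ forces $R<1/\sqrt{3}$ near an $n$-fold axis when $n\ge 7$, via a packing/covering estimate in the plane orthogonal to $\ell$) this gives the contradiction.

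The main obstacle, and the step I would spend the most care on, is precisely pinning down the right geometric configuration: one needs a point $\mathbf w\in X$ with $\operatorname{dist}(\mathbf w,\ell)$ small enough that its $g$-orbit of circumradius at most that distance has a pair of points closer than $1$, while simultaneously respecting that $R$ is the covering radius (so empty balls of radius larger than $R$ cannot exist). Balancing the lower bound on the orbit radius (forced by the $\ge 1$ spacing of the $n$-gon) against the upper bound (forced by the covering property applied cleverly along $\ell$) is where $n=7$ first becomes impossible; for $n\le 6$ the two constraints are compatible. I would carry this out by choosing the auxiliary point on $\ell$ to optimize the trade-off, then reduce to the elementary inequality $2\sin(\pi/n)\ge 1$ fails for $n\ge 7$, i.e. $\pi/n<\pi/6$, which is the desired bound $n\le 6$.
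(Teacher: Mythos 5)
Your proposal has a genuine gap. The single inequality you actually derive, $2R\sin(\pi/n)\geq 1$, only bounds $n$ in terms of $R$, and since Delone sets with minimum distance $1$ exist for arbitrarily large $R$, this cannot yield the absolute bound $n\leq 6$. You acknowledge this yourself, but the repairs you sketch are not workable: there is no reason that ``$R<1/\sqrt{3}$ near an $n$-fold axis when $n\geq 7$'' --- $R$ is a global covering parameter and is not constrained by local symmetry in this way --- and the reduction to ``$2\sin(\pi/n)\geq 1$'' would require $R\leq 1$, which is not given. More fundamentally, your strategy of using the covering property to force a point of $X$ \emph{close to the axis} points in the wrong direction: as you note, the spacing condition forces orbits to be \emph{far} from the axis, and no choice of auxiliary point on $\ell$ resolves this tension into a contradiction for all $R$.

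The idea you are missing is a descent argument based on the distance from the \emph{center}, not from the axis. Let $\mathbf{y}$ be the point of $C_{\mathbf x}(2R)$ off the axis $\ell$ that is nearest to $\mathbf x$, at distance $r^*$ (such a point exists and $r^*\leq 2R$ since $2R$-clusters are full-dimensional). Its orbit under the order-$n$ rotation is a regular $n$-gon whose circumradius is at most $r^*$, so if $n\geq 7$ its side length satisfies $a\leq 2r^*\sin(\pi/7)<0.87\,r^*$. The two $n$-gon neighbors of $\mathbf{y}$ are points of $X$ at distance $a$ from $\mathbf{y}$, and since the three points are not collinear, at least one neighbor lies off the rotation axis at $\mathbf{y}$. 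Hence the nearest off-axis distance measured at $\mathbf{y}$ is at most $a<0.87\,r^*$. Under your hypothesis $N(2R)=1$ this quantity is a cluster invariant, equal to $r^*$, giving the immediate contradiction $r^*<0.87\,r^*$. (The paper proves the stronger statement that \emph{some} point of any Delone set has rotation order at most $6$, without assuming $N(2R)=1$, by iterating this step to produce interpoint distances shrinking geometrically below $1$.) Your write-up contains the raw ingredients --- the regular $n$-gon orbit and the minimum-distance constraint --- but never compares the $n$-gon's side length to the distance from the cluster center, which is the step that makes $n=7$ impossible independently of $R$.
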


Although the important Theorem~\ref{thm:rotation} was already discovered by Shtogrin in the late 1970's, it was only published in 2010 in an Abstract for a conference in honor of Delone (see \cite{Sto2010}). Dolbilin~\cite{Dol2019} recently observed that Theorem~\ref{thm:rotation} follows from a more general result, Theorem~\ref{thm:rotation2} below, which can be proved for arbitrary Delone sets, and in particular without any assumption on $N(2R)$ whatsoever (including finiteness). Here we present a proof of Theorem~\ref{thm:rotation2} and then derive Theorem~\ref{thm:rotation} as a consequence. 

For a point $\mathbf x$ of a Delone set $X$ in $\mathbb{R}^3$, we let $n({\mathbf x})$ denote the maximal order of a rotation in the $2R$-cluster group $S_{\mathbf x}(2R)$, that is, the maximal order of a rotation axis through $\mathbf x$ for the cluster $C_{\mathbf x}(2R)$.  Clearly, $S_{\mathbf x}(2R)$ may have many rotation axes of order $n({\mathbf x})$.

\begin{theorem}\label{thm:rotation2}
For each Delone set $X$ in $\mathbb{R}^3$ there exists a point ${\mathbf x}\in X$ such that $n({\mathbf x})\leq 6$.	
\end{theorem}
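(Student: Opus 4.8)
The plan is to argue by contradiction: suppose $n(\mathbf x)\geq 7$ for every point $\mathbf x\in X$, and derive a contradiction with the fact that $X$ is a Delone set with parameter $R$ (in particular, with the lower bound $1$ on interpoint distances and the upper bound $R$ on the empty-ball radius). The key geometric mechanism I would exploit is that a rotation axis of large order $n$ through $\mathbf x$ forces many points of the $2R$-cluster $C_{\mathbf x}(2R)$ to lie on small circles around that axis: any point $\mathbf y\in C_{\mathbf x}(2R)$ not on the axis has a full orbit of $n$ points under the cyclic group, all at the same distance from $\mathbf x$ and all on a circle of radius $\rho\sin\theta$ (where $\rho=|\mathbf{xy}|\leq 2R$ and $\theta$ is the angle to the axis). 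For these $n$ orbit points to be pairwise at distance $\geq 1$, the circle must be long enough, which already forces $n$ to be bounded in terms of how close $\mathbf y$ can be to $\mathbf x$ and to the axis.

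First I would set up the standard ``closest point'' or ``closest axis'' extremal argument. Since $X$ is Delone, pick $\mathbf x\in X$ and look at a nearest neighbor $\mathbf y$ of $\mathbf x$; then $1\leq|\mathbf{xy}|\leq 2R$ (the upper bound because some point lies within $2R$, in fact within a ball related to $R$). Under a rotation axis $\ell$ of order $n(\mathbf x)$ through $\mathbf x$, the orbit of $\mathbf y$ consists of $n(\mathbf x)$ points of $C_{\mathbf x}(2R)$ on a circle; consecutive orbit points are at distance $2|\mathbf{xy}|\sin\theta\,\sin(\pi/n(\mathbf x))$, which must be $\geq 1$ unless $\mathbf y$ lies on $\ell$. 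This gives $\sin(\pi/n(\mathbf x))\geq 1/(2|\mathbf{xy}|\sin\theta)\geq 1/(4R\sin\theta)$, so to get $n(\mathbf x)$ large one needs $\mathbf y$ to be nearly on the axis. The crux, then, is a mutual-constraint argument: if at every point of $X$ there is an axis of order $\geq 7$, one can follow a chain of points each nearly on the axis of the previous one and derive that points accumulate or that an empty ball of radius $>R$ appears, contradicting the Delone property. I would formalize this by choosing, among all points of $X$ and all their high-order axes, the configuration minimizing the distance from a point to its own nearest neighbor, or maximizing the order; the rotated copies of $\mathbf x$'s own nearest neighbor around a \emph{different} point's axis then produce a point strictly closer than the assumed minimum, the desired contradiction.

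The main obstacle I expect is controlling the interaction between axes at different points — a single-point argument only bounds $n(\mathbf x)$ very weakly (or not at all, since $\mathbf y$ could genuinely be close to the axis), so the whole force of the theorem comes from combining the local conditions at many points simultaneously via the Delone constraints. The delicate part is choosing the right extremal quantity and showing the rotated images genuinely violate it (rather than coinciding with existing points): one must track which points of $C_{\mathbf x}(2R)$ are pinned near the axis, use that they themselves must carry order-$\geq 7$ axes, and show the two axes cannot be too misaligned, so that iterating shrinks distances below $1$. I would also need the elementary fact (from \cite{local}) that $2R$-clusters are $3$-dimensional, so the cluster group is finite and the axis really has a well-defined finite order $n(\mathbf x)$; and I would keep the argument free of any assumption on $N(2R)$, since that is exactly the generality Theorem~\ref{thm:rotation2} claims over Theorem~\ref{thm:rotation}.
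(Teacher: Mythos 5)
Your overall strategy --- contradiction from $n(\mathbf x)\geq 7$ everywhere, combined with the observation that the orbit of a near-axis point under a rotation of order $\geq 7$ is a regular polygon whose side is strictly shorter than its circumradius --- is the right one, and it is the one the paper uses. But the step you yourself flag as delicate is exactly where your plan breaks down, in two ways. First, you propose to control the passage from one point to the next by ``showing the two axes cannot be too misaligned.'' No such alignment control is available or needed: the high-order axes at consecutive points can be arbitrarily misaligned. The paper's resolution is purely combinatorial. If $\mathbf x_1^*$ is the point of $C_{\mathbf x_0^*}(2R)$ nearest to $\mathbf x_0^*$ \emph{off the axis} $l_{\mathbf x_0^*}$, at distance $r_1^*$, then its orbit is a regular $n$-gon ($n\geq 7$) of circumradius at most $r_1^*$, so the two polygon-neighbors of $\mathbf x_1^*$ are at distance $a_1\leq 2r_1^*\sin(\pi/7)<0.87\,r_1^*$ from it; since $\mathbf x_1^*$ and these two neighbors form a non-collinear triple, at least one neighbor lies off \emph{whatever} line $l_{\mathbf x_1^*}$ turns out to be. Hence the nearest off-axis distance at $\mathbf x_1^*$ is less than $0.87\,r_1^*$, with no reference whatsoever to the relative position of the two axes.

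Second, your extremal formulation (``choose the configuration minimizing the distance from a point to its own nearest neighbor'') is problematic: over an infinite Delone set that infimum need not be attained, so producing a strictly closer pair from a putative minimizer is not by itself a contradiction; and the relevant quantity is in any case the nearest \emph{off-axis} distance, not the nearest-neighbor distance (the true nearest neighbor may lie on the axis, where the orbit argument says nothing). The paper sidesteps both issues by running an infinite descent with the fixed contraction factor $0.87>2\sin(\pi/7)$: the resulting sequence satisfies $r_{i+1}^*<0.87\,r_i^*$ and so eventually drops below $1$, contradicting the minimal interpoint distance. With these two repairs --- the non-collinearity trick in place of axis-alignment control, and the quantitative descent in place of the extremal choice --- your outline becomes the paper's proof.
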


\begin{proof} 
First recall that the $2R$-clusters $C_{\mathbf x}(2R)$ of points $\mathbf x$ in $X$ are full-dimensional and in particular contain points off the rotation axis of any rotation belonging to the $2R$-cluster group $S_{\mathbf x}(2R)$. 

Assume to the contrary that $n({\mathbf x})\geq 7$ for all $\mathbf x\in X$, and choose an arbitrary point $\mathbf x_0^*$ of~$X$. Our goal is to construct an infinite sequence of points  of $X$ such that the corresponding sequence of successive interpoint distances monotonically tends to $0$. More specifically, in this sequence we have  
$|{\mathbf x}_{i+1}^*{\mathbf x}_{i+2}^*|<0.87 |{\mathbf x}_i^*{\mathbf x}_{i+1}^*|$ 
for each $i\geq 0$. Clearly this phenomenon cannot occur in a Delone set, since interpoint distances in $X$ cannot be smaller than $2r(=1)$. 

We begin with the chosen point ${\mathbf x}_{0}^{*}$ and construct ${\mathbf x}_{1}^{*}$. Let $l_{{\mathbf x}_{0}^{*}}$ denote the rotation axis of any rotation in $S_{{\mathbf x}_{0}^{*}}(2R)$  of order $n({\mathbf x}_{0}^{*})$, and let ${\mathbf x}_{1}^{*}$ be a point of $C_{{\mathbf x}_{0}^{*}}(2R)$ lying off $l_{{\mathbf x}_{0}^{*}}$, such that ${\mathbf x}_{1}^{*}$ has the smallest distance from ${\mathbf x}_{0}^{*}$  among all points of $C_{{\mathbf x}_{0}^{*}}(2R)$ lying off~$l_{{\mathbf x}_{0}^{*}}$. Set $r_1^*:=|{\mathbf x}_{0}^*{\mathbf x}_{1}^{*}|$. 

Now observe that the shell $H_{{\mathbf x}_{0}^*}(r_1^*)$ contains the vertices of a convex regular $n_{{\mathbf x}_{0}^*}$-gon~$P_1$ with one vertex at ${\mathbf x}_{1}^{*}$ and with center on $l_{{\mathbf x}_{0}^{*}}$. As the circumradius of $P_1$ does not exceed $r_1^*$, and $n_{{\mathbf x}_{0}^*}\geq 7$, the sidelength $a_1$ of $P_1$ must satisfy the inequality
\begin{equation}
\label{eqfora1}
a_1\leq 2r^*_1\sin \frac{\pi }{n_{{\mathbf x}_{0}^*}}\leq 2r^*_1\sin \frac{\pi}{7}< 0.87~ r^*_1. 
\end{equation}  
Thus $a_1< 0.87\,r^*_1$. Clearly, the two vertices of $P_1$ adjacent to ${\mathbf x}_1^*$ have distance $a_1$ from ${\mathbf x}_1^*$, and together with ${\mathbf x}_1^*$ form a non-collinear triple of points.

Now consider the $2R$-cluster centered at ${\mathbf x }^*_1$, as well as the rotation axis $l_{{\mathbf x}_{1}^{*}}$ of any rotation in $S_{{\mathbf x}_{1}^{*}}(2R)$ of order $n({\mathbf x}_{1}^{*})$. Note that there is no requirement of mutual equivalence of $2R$-clusters among the assumptions of our theorem, so $C_{{\mathbf x}^*_1}(2R)$ and $C_{{\mathbf x}^*_0}(2R)$ may not be congruent, and in particular, $n({\mathbf x}_{1}^{*})$ and $n({\mathbf x}_{0}^{*})$ may be different. By what was said above, at least one of the two neighboring vertices of ${\mathbf x}_1^*$ in $P_1$, denoted ${\mathbf x}_1'$, cannot lie on the axis~$l_{{\mathbf x}_{1}^{*}}$. 

Next choose a point ${\mathbf x}_{2}^{*}$ of $C_{{\mathbf x}_{1}^{*}}(2R)$ not contained in $l_{{\mathbf x}_{1}^{*}}$, such that ${\mathbf x}_{2}^{*}$ has the smallest distance from ${\mathbf x}_{1}^{*}$  among all points of $C_{{\mathbf x}_{1}^{*}}(2R)$ lying off $l_{{\mathbf x}_{1}^{*}}$. Define $r^*_2:=|{{\mathbf x}^*_1}{{\mathbf x}^*_2}|$. Then, since ${\mathbf x}_1'$ lies in $C_{{\mathbf x}_{1}^{*}}(2R)$, it is clear that 
\begin{equation}
\label{eqforr2star}
r_2^*\leq |{\mathbf x}_1^*{\mathbf x}_1'|=a_1< 0.87\,r_1^*.
\end{equation}                 
Further, the point ${\mathbf x}^*_2$ of $C_{{\mathbf x}_1^*}$ determines a regular $n_{{\mathbf x}^*_2}$-gon $P_2$ with one vertex at ${\mathbf x}_{2}^{*}$ and with center on $l_{{\mathbf x}_{1}^{*}}$. Let $a_2$ denote the edge length of $P_2$. Then the two vertices of $P_2$ adjacent to vertex ${\mathbf x}_{2}^{*}$ are at distance $a_2$ from ${\mathbf x}_{2}^{*}$, and together with ${\mathbf x}_{2}^{*}$ form a non-collinear triple.

We now proceed in the same fashion with ${\mathbf x}_{1}^{*}$ and $l_{{\mathbf x}_{1}^{*}}$ replaced by ${\mathbf x}_{2}^{*}$ and $l_{{\mathbf x}_{2}^{*}}$, respectively, and construct a point ${\mathbf x}_{3}^{*}$ nearest to ${\mathbf x}_{2}^{*}$ among all points of $C_{{\mathbf x}_2^*}$ not lying on $l_{{\mathbf x}_{2}^{*}}$, and so on. As $n_{{\mathbf x}^*_2}\geq 7$ this then gives
\begin{equation}
\label{eqforr3star}
r_3^*\leq a_2 < 0.87\,r^*_2 <  (0.87)^2\,r^*_1.
\end{equation}

Hence, continuing in this manner we arrive at the desired infinite sequence of points $\big({\mathbf x}_i^*\big)_{i\geq 0}$, as well as the corresponding sequence of interpoint distances $\big(r_{i+1}^*\big)_{i\geq 0}$ satisfying $r_{i+1}^{*}<0.87 r_{i}^{*}$ for each $i\geq 1$. Thus, for large enough $i$, the interpoint distance $r_{i}^{*}$ is positive but less than $1$. This is impossible in a Delone set. 

Thus there must be a point $\mathbf x$ in $X$ with $n({\mathbf x})\leq 6$. This completes the proof.
\end{proof}

If the $2R$-clusters of a Delone set $X$ are mutually equivalent, that is, $N(2R)=1$, then the maximal rotation orders $n_{\mathbf x}$ at points ${\mathbf x}$ of $X$ must all coincide.  
Thus Theorem~\ref{thm:rotation} follows directly from Theorem~\ref{thm:rotation2}.  

\medskip


Now we prove a theorem that establishes the regularity of Delone sets with mutually equivalent $2R$-clusters, whose groups contain 6-fold rotations. This result was announced in~\cite{Sto2010} without proof.

\begin{theorem}[Shtogrin, \cite{Sto2010}] 
\label{thm:n=6} 
Let $X$ be a Delone set in ${\mathbb R}^3$ with mutually equivalent $2R$-clusters, and suppose the $2R$-cluster group $S_{X}(2R)$ has a rotation axis of order 6. Then $X$ is a regular system. Moreover, $X$ is (a translate of) either a lattice $\Gamma$ or a bi-lattice $\Gamma\cup(\Gamma+\mathbf t)$, ${\mathbf t}\not\in\Gamma$, where in either case the underlying lattice $\Gamma$ is spanned by a hexagonal lattice in a plane and a vector  orthogonal to the plane (that is, $\Gamma$ has a basis with a Gram matrix of the form 
\[
\begin{bmatrix} 
\lambda   & \lambda /2 &0 \\
\lambda /2 & \lambda & 0 \\
0 & 0 & \mu   	
\end{bmatrix}
\]
with $\lambda,\mu>0$).
\end{theorem}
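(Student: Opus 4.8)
The plan is to start from a point $\mathbf x\in X$ whose $2R$-cluster group $G=S_{\mathbf x}(2R)$ contains a rotation $\varphi$ of order $6$ about an axis $l$ through $\mathbf x$, and to analyze the shell structure of $C_{\mathbf x}(2R)$ in planes perpendicular to $l$. First I would observe that, by the argument already used in the proof of Theorem~\ref{thm:rotation2}, the nearest point $\mathbf y$ of $X$ to $\mathbf x$ that lies off $l$ generates, under the $6$-fold rotation, a regular hexagon $P$ on the shell $H_{\mathbf x}(r)$ with $r=|\mathbf{xy}|$; since the circumradius of this hexagon equals its sidelength, the six vertices of $P$ are at mutual distance $r$ from their neighbors, and in particular $P$ together with $\mathbf x$ (if $\mathbf x$ is coplanar with $P$) or the projection of $\mathbf x$ exhibits a piece of the standard triangular lattice. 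The key quantitative input is that $r$ cannot be too large: because $\mathbf x$ is a deep hole of radius $R$ and the hexagon has circumradius $r\le 2R$, while the edge length is exactly $r\ge 1$, one gets tight control, and in fact one wants to show $r$ is so small that the hexagon, its center, and the analogous hexagons in neighboring perpendicular planes are \emph{forced}.

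Next I would pin down the position of the hexagon's center $\mathbf c$ (the foot of $\mathbf x$ on $l$) relative to $\mathbf x$. Either $\mathbf x$ lies in the plane of $P$, in which case $\mathbf x=\mathbf c$ and $\mathbf x$ is the center of a regular hexagon of points of $X$ at distance $r$, forcing a full triangular-lattice layer through $\mathbf x$ once one argues (using the Delone property and $r\le 2R$) that no further points intrude and that $r=1$ forces the minimal layer; or $\mathbf x$ is off the plane of $P$, and then I would look at the two adjacent perpendicular planes (above and below $\mathbf x$ along $l$) and show that each must again carry a $6$-fold-symmetric configuration, the combined shell $C_{\mathbf x}(2R)$ then being squeezed between the empty-ball constraint at $\mathbf x$ and the $1$-separation constraint among the hexagon vertices. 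The upshot of this local analysis is that the $2R$-cluster is, up to isometry, a specific finite configuration: a stack of triangular-lattice layers (possibly with $\mathbf x$ between two layers).

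Having identified $C_{\mathbf x}(2R)$ completely, I would invoke the Local Criterion (Theorem~\ref{thm:local}) in the following way. The cluster group $S_{\mathbf x}(2R)$ is now one of the explicit subgroups of $\mathrm O(3)$ compatible with this stacked-triangular-lattice cluster — these have hexagonal or dihedral type and small tower height — and I would check that $S_{\mathbf x}(2R)=S_{\mathbf x}(4R)$, i.e. no symmetry of the $2R$-cluster is broken when passing to the $4R$-cluster, because the $4R$-cluster is determined by propagating the lattice-layer structure and inherits the same stabilizer. Combined with $N(2R)=1$ (so that $N(4R)=1$ would need to be established, or rather one uses that the whole configuration is rigidly propagated), Theorem~\ref{thm:local} with $\rho_0=2R$ yields that $X$ is a regular system and is uniquely determined by $C_{\mathbf x}(4R)$. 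Reading off that unique determination shows $X$ is a translate of a lattice $\Gamma$ or a bi-lattice $\Gamma\cup(\Gamma+\mathbf t)$ with $\Gamma$ having the stated hexagonal-prism Gram matrix: the in-plane part is the triangular lattice of the layers (Gram matrix $\begin{bmatrix}\lambda&\lambda/2\\ \lambda/2&\lambda\end{bmatrix}$ after scaling), and the stacking along $l$ is by a single orthogonal vector (contributing the $\mu$ entry), with the bi-lattice case arising exactly when $\mathbf x$ sits strictly between two layers so that $X$ splits into two interpenetrating copies.

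\textbf{Main obstacle.} The hard part will be the rigidity step: showing that the $6$-fold symmetry together with the Delone parameters $(\tfrac12,R)$ leaves \emph{no freedom} in the $2R$-cluster — ruling out extra shells, intermediate points, or non-lattice stackings, and in particular controlling how the perpendicular layers through and near $\mathbf x$ must align with one another. This is where a careful case analysis on the radius $r$ of the first off-axis point and on whether $\mathbf x$ is coplanar with its hexagon is needed, and where the interplay between the upper bound $r\le 2R$ (from deep-hole-ness) and the lower bound $1$ on interpoint distances has to be exploited to collapse all possibilities down to the stacked triangular lattice.
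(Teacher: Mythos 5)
Your outline has the right skeleton (hexagonal shell generated by the $6$-fold axis, triangular-lattice layers, stacking along the axis), but the steps you defer or gesture at are exactly the content of the proof, and the mechanism you propose for forcing them --- a quantitative squeeze between the empty-ball bound $r\le 2R$ and the separation bound $1$ --- is not the one that works. The engine of the actual argument is that the minimal off-axis distance $r_1=|\mathbf{xy}|$ is an \emph{invariant} of the equivalence class of $2R$-clusters. This invariance immediately kills your second case ($\mathbf x$ off the plane of $P$): if $[\mathbf x,\mathbf y]$ is not perpendicular to $l_{\mathbf x}$, the hexagon's side length $a$ is strictly less than $r_1$, and at least one of the two hexagon neighbours of $\mathbf y$ lies off the axis $l_{\mathbf y}$ (the three points are non-collinear, so at most one neighbour can be on that axis) at distance $a<r_1$ from $\mathbf y$ --- contradicting the invariance of $r_1$. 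Hence $\mathbf x$ is always the center of its hexagon and always lies \emph{in} a layer; your picture of the bi-lattice as arising ``when $\mathbf x$ sits strictly between two layers'' is not correct --- it arises when the nearest layers above and below are at unequal distances $\tilde h\neq\hat h$. The same invariance argument (not a metric squeeze) is what shows that the $6$-fold axes at all six hexagon vertices are perpendicular to the common plane --- a step you omit entirely, and without which the single hexagon does not propagate to a full triangular-lattice layer --- and what pins the nearest point of $X$ in each open half-space to the axis $l_{\mathbf x}$ (via the planar Voronoi cell of $\mathbf x$, of circumradius $r_1/\sqrt3<r_1$). One must further exclude the subcase in which the $6$-fold axis at that nearest point is perpendicular to $l_{\mathbf x}$, which you do not mention.

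The closing appeal to the Local Criterion with $\rho_0=2R$ is also problematic: Theorem~\ref{thm:local} requires $N(4R)=1$, whereas the hypothesis only gives $N(2R)=1$. Establishing $N(4R)=1$ (or that ``the whole configuration is rigidly propagated,'' as you put it) is precisely the hard part; and once the layer-by-layer rigidity analysis is done, one has already exhibited $X$ explicitly as a lattice or bi-lattice and can read off regularity directly. This is what the paper does --- its proof of this theorem never invokes the Local Criterion. So the proposal is a reasonable plan, but as written the decisive steps are missing and the forcing mechanism you name would not carry them out.
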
  

\begin{proof}  
For a point ${\mathbf x}$ of $X$, let $l_{\mathbf x}$ denote the 6-fold rotation axis of the $2R$-cluster group $S_{\mathbf x}(2R)$ at $x$. It follows from the complete list of finite subgroups of $\mathrm{O}(3)$ (see Section \ref{10rbound}) that $S_{\mathbf x}(2R)$ can only have one $6$-fold rotation axis. 

Choose an arbitrary point ${\mathbf x}$ of $X$, and keep it fixed. Let ${\mathbf y}_1$ denote the closest point to ${\mathbf x}$ among all points of $X$ off $l_{\mathbf x}$. Set $r_1:=|{\mathbf x}{\mathbf y}_1|$. Since the $2R$-cluster $S_{\mathbf x}(2R)$ is full-dimensional, it is clear that $r_1\leq 2R$. Therefore the shell $H_{\mathbf x}(r_1)$ contains the vertices ${\mathbf y}_1,{\mathbf y}_{2},\ldots,{\mathbf y}_{6}$ of a regular hexagon $P$ with side length $a:=|{\mathbf y}_{1}{\mathbf y}_{2}|$, which is centered on $l_{\mathbf x}$ and contained in a plane perpendicular to $l_{\mathbf x}$. We claim that this plane must pass through ${\mathbf x}$ itself, so that ${\mathbf x}$ becomes the center of $P$. In fact, since the $2R$-clusters at ${\mathbf x}$ and ${\mathbf  y}_1$ are equivalent, the closest point to ${\mathbf y}_1$ among all points of $C_{{\mathbf y}_1}(2R)$ off $l_{{\mathbf y}_1}$ must have distance $r_{1}$ from ${\mathbf y}_1$. Hence, if the line segment $[{\mathbf x},{\mathbf  y}_1]$ is not perpendicular to $l_{\mathbf x}$, then $r_1>a$ and therefore at least one of the points ${\mathbf y}_{2}$ or ${\mathbf y}_{6}$ is a point of $C_{{\mathbf y}_1}(2R)$ lying off $l_{{\mathbf y}_1}$, with a smaller distance from ${\mathbf y}_1$ than $r_{1}$, namely $a$. Thus $P$ lies in a plane through $\mathbf x$ perpendicular to $l_{\mathbf x}$, and we denote this plane by $\Pi:=\Pi_{\mathbf x}$ from now on (see  Fig.  \ref{fig:n=6}).

\begin{figure}[!ht]
\begin{center}
\includegraphics[width=0.7\textwidth]{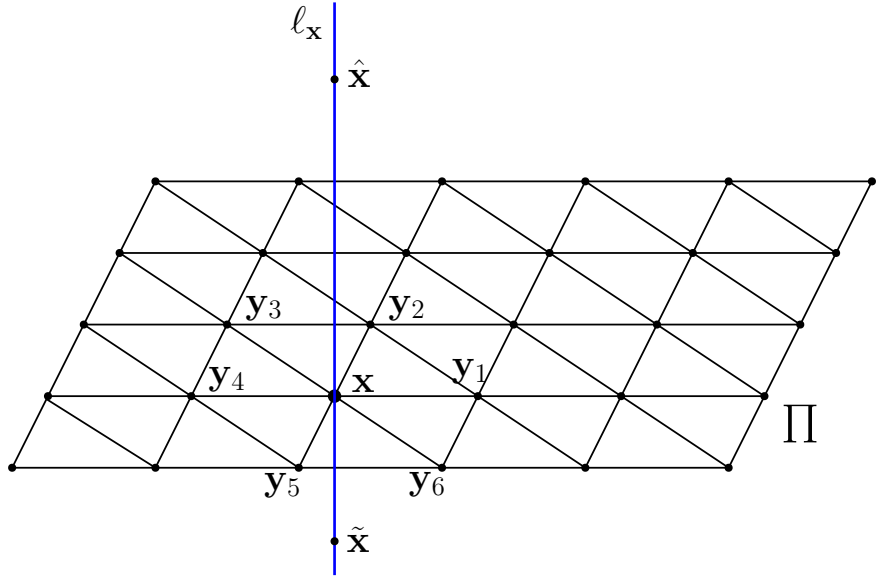}
\caption{The hexagonal lattice $\Lambda$ lying  in the plane $\Pi$. The 6-fold axis $\ell_{\mathbf x}$ is perpendicular  to $\Pi$, and  $\hat { \mathbf x}$ and $\tilde{\mathbf x}$ are points of $X$ on $\ell_{\mathbf x}$ on different sides of $\Pi$ which are  closest to $\mathbf x$ among all points of $X$ in the open half-spaces defined by $\Pi$.}
\label{fig:n=6}
\end{center}
\end{figure}

Now consider the vertex ${\mathbf y}_1$ of the regular hexagon $P$ as well as its two adjacent vertices ${\mathbf y}_2$  and ${\mathbf y}_6$. The $r_1$-shell $H_{{\mathbf y}_1}(r_1)$ at ${\mathbf y}_1$ contains the three points $\mathbf x$, ${\mathbf y}_2$, ${\mathbf y}_6$ from $C_{\mathbf x}(r_1)$.  By arguments similar to those above, each of the three line segments  $[{\mathbf y}_1,{\mathbf x}]$, $[{\mathbf y}_1,{\mathbf y}_2]$, and $[{\mathbf y}_1,{\mathbf y}_6]$ of length $r_1$ must either lie on the axis $l_{{\mathbf y}_1}$ or be perpendicular to this axis. It is clear that this can hold only if the axis $l_{{\mathbf y}_1}$ is perpendicular to each segment and thus to $\Pi$. 

Thus all six vertices of the regular hexagon $P$ centered at the original point $\mathbf x$ have the same property as $\mathbf x$ itself, namely that the 6-fold axes of their $2R$-cluster groups are perpendicular to $\Pi$. This can be rephrased as follows:\ if $P$ is subdivided into six regular triangles by joining each vertex to the center, the resulting tiling of $P$ by triangles has the property that the 6-fold axes at the vertices of each triangle are perpendicular to $\Pi$. 
Now replacing the original point $\mathbf x$ by any of the six vertices of $P$ and applying the same arguments as above then shows that there is a regular hexagon congruent to $P$ centered at each of the six vertices. When subdivided into six regular triangles, each of these regular hexagons overlaps with $P$ on two regular triangles. The collection of regular triangles derived from these hexagons as well as from $P$, forms a patch of a tiling of $\Pi$ by regular triangles. Repeating this process indefinitely using the vertices on the boundary of the patch already constructed, will result in a tiling of $\Pi$ by regular triangles in which the 6-fold axes at the vertices are perpendicular to $\Pi$ . 

Thus $X$ contains the vertex set $\Lambda:=\Lambda_{\mathbf x}$ of a tiling $\mathcal{T}$ of $\Pi$ by regular triangles of side length $r_{1}$, and the 6-fold axes for the $2R$-cluster groups at the points of $\Lambda$ are all perpendicular to $\Pi$. Up to translation (by $\mathbf x$), $\Lambda$ coincides with the hexagonal lattice $\Lambda'$ (say) spanned by the vectors ${\mathbf x}{\mathbf y}_1$ and ${\mathbf x}{\mathbf y}_2$. Note in particular that $\Lambda=X\cap\Pi$. In fact, for any point $\mathbf y\in\Pi$ outside of $\Lambda$ there is a point ${\mathbf x}'\in\Lambda $ at distance less than $r_1$, and clearly $\mathbf y$ does not lie on the $6$-fold axis~$l_{{\mathbf x}'}$; however, this is a contradiction to the definition of $r_{1}$ applied to the $2R$-cluster $C_{{\mathbf x}'}(2R)$. 

Our next goal is to prove that $X$ is (a translate of) either a lattice $\Gamma$ or a bi-lattice 
\[\Gamma\cup(\Gamma+\mathbf t),\;\, {\mathbf t}\not\in\Gamma,\,\mathbf t\perp \Pi,\] 
where in either case the underlying lattice $\Gamma$ is generated by the hexagonal lattice $\Lambda'$ and a vector orthogonal to $\Pi$. Our strategy is to build $X$ layer by layer from its plane section $X\cap\Pi=\Lambda$.
       
We begin by analyzing the shortest distances of points of $X$ in $\Pi$ from points of $X$ in the two open half-spaces of $\mathbb{R}^3$ determined by $\Pi$ and denoted $\tilde\Pi$ and $\hat\Pi$. Let $\tilde{\mathbf x}$ and $\hat{\mathbf x}$ denote points of $X$ closest to ${\mathbf x}$ in the open half-spaces $\tilde\Pi$ and $\hat\Pi$, respectively. Set $\tilde h:= |{\mathbf x}{\tilde {\mathbf x}}|$ and $\hat h:=|{\mathbf x}\hat{\mathbf x}|$, and note that $0<{\tilde h},{\hat h}\leq 2R$ and $\tilde{\mathbf x},\hat{\mathbf x}\in C_{\mathbf x}(2R)$, since otherwise we could find an empty ball of $X$ of radius larger than $R$. Note that, since any two $2R$-clusters of $X$ are equivalent, ${\tilde h}$ and ${\hat h}$ could just as well have been defined relative to any point of $X$ in $\Pi$ in place of $\mathbf x$; in other words, ${\tilde h}$ and ${\hat h}$ are just the shortest distances of points of $X$ in $\Pi$ from points of $X$ in $\tilde\Pi$ and $\hat\Pi$, respectively. In fact, more generally, if $\mathbf y$ is any point of $X$ and $\Pi_{\mathbf y}$ is the plane through $\mathbf y$ perpendicular to $l_{\mathbf y}$, then the shortest distances of $\mathbf y$ from points of $X$ in the two open half-spaces determined by $\Pi_{\mathbf y}$ are precisely ${\tilde h}$ and ${\hat h}$, respectively. 

Without loss of generality we will assume that ${\tilde h}\leq{\hat h}$. We consider the two cases $\tilde h<\hat h$ and $\tilde h= \hat h$ separately, and begin with the more complicated case when $\tilde h < \hat h$.
\vskip.03in
  
\noindent 
{\bf Case 1.}\ $\tilde h < \hat h$ 
   
We will show in this case that $X$ is (a translate of) a bi-lattice. Our first step is to prove that the point $\tilde{\mathbf x}$ of $\tilde\Pi$ associated with $\mathbf x$ lies on the axis $l_{\mathbf x}$ through $\mathbf x$.

First observe that the orthogonal projection of $\tilde{\mathbf x}$ onto $\Pi$ must be a point located in the 2-dimensional Voronoi domain of $\mathbf x$, taken in $\Pi$ with respect to $\Lambda$. Otherwise $\tilde{\mathbf x}$ would be at a distance less than $\tilde h$ from a point ${\mathbf y}\in\Lambda$ distinct from $\mathbf x$, which is impossible by the equivalence of $C_{\mathbf y}(2R)$ and $C_{\mathbf x}(2R)$. Note that the 2-dimensional Voronoi domains in $\Pi$ of points of ${\Lambda}$ are regular hexagons with circumradius $\frac{r_1}{\sqrt 3}<r_1$.     
      
Now suppose that ${\tilde{\mathbf x}}$ does not lie on $l_{\mathbf x}$. Then, by the 6-fold rotational symmetry of $C_{\mathbf x}(2R)$ about $l_{\mathbf x}$, there is a regular hexagon with one vertex at ${\tilde{\mathbf x}}$ and with center on $l_{\mathbf x}$. The circumradius $b$ of this hexagon, and thus its side-length, cannot exceed $\frac{r_1}{\sqrt 3}$, as the projection of $\tilde{\mathbf x}$ onto $\Pi$ lies in the 2-dimensional Voronoi domain of $\mathbf x$. The two vertices of the hexagon adjacent to vertex ${\tilde{\mathbf x}}$ cannot both lie on $l_{\tilde {\mathbf x}}$, and are at distance $b$ from $\tilde {\mathbf x}$. Hence at least one of these vertices, ${\mathbf y}$ (say), 
must satisfy $|{\tilde{\mathbf x}}{\mathbf y}|=b\leq \frac{r_1}{\sqrt 3} <r_1$, which contradicts the definition of $r_{1}$ as the smallest distance between the center and any off-axis point in a $2R$-cluster. 
Bear in mind that the $2R$-clusters of $X$ are mutually equivalent. Thus $\tilde{\mathbf x}$ lies on $\ell_{\mathbf x}$.

We now proceed by investigating the relative positions of the axes $l_{\tilde{\mathbf x}}$ and $l_{\mathbf x}$ both of which pass through $\tilde{\mathbf x}$. Let $\Pi_{\tilde{\mathbf x}}$ denote the plane through $\tilde{\mathbf x}$ perpendicular to~$l_{\tilde {\mathbf x}}$. We now consider two scenarios.  \vskip.03in

\noindent 
{\bf Subcase 1a.}\ The axes $l_{\tilde {\mathbf x}}$ and $l_{\mathbf x}$ are not perpendicular.

We prove that $X$ must be (a translate of) a bi-lattice in this case. 

Our first step is to show that the two axes must coincide, that is, $l_{\tilde{\mathbf x}}=l_{\mathbf x}$. In fact, since $l_{\tilde {\mathbf x}}$ and $l_{\mathbf x}$ are not perpendicular, ${\mathbf x}$ is a point off $\Pi_{\tilde{\mathbf x}}$ and therefore has smallest distance from ${\tilde{\mathbf x}}$, namely $\tilde h$, among all points of $X\setminus\Pi_{\tilde{\mathbf x}}$. By arguments similar to those above, any such point closest to ${\tilde{\mathbf x}}$ must lie on the axis $l_{\tilde{\mathbf x}}$. Hence the axes $l_{\tilde {\mathbf x}}$ and $l_{\mathbf x}$ must actually coincide, and the planes $\Pi_{\tilde{\mathbf x}}$ and $\Pi$ must be parallel. We will refer to the segment $[{\mathbf x},{\tilde{\mathbf x}}]$ as the $\tilde h$-{\it segment\/} of ${\mathbf x}$, as well as of ${\tilde{\mathbf x}}$, and note that the $\tilde h$-segment lies perpendicular to both $\Pi$ and $\Pi_{\tilde{\mathbf x}}$. By the mutual equivalence of the $2R$-clusters of $X$, there exists an $\tilde h$-segment at each point of $X$, and as $\tilde h < \hat h$, this is unique.

Appealing again to the mutual equivalence of $2R$-clusters in $X$, we observe that the plane $\Pi_{\tilde{\mathbf x}}$ through $\tilde{\mathbf x}$ must contain a congruent copy $\Lambda_{\tilde{\mathbf x}}$ of the vertex-set $\Lambda$ of the tessellation $\mathcal{T}$ in $\Pi$. We show that $\Lambda_{\tilde {\mathbf x}}$ is a translate of $\Lambda$, and in particular that  
\[\Lambda_{\tilde {\mathbf x}}=\Lambda+{\mathbf t},\;\,{\mathbf t}:={{\mathbf x}{\tilde{\mathbf x}}}.\]

The $\tilde h$-segment $[{\mathbf y},{\tilde{\mathbf y}}]$ at any point $\mathbf y$ in $\Lambda$, with $\mathbf y$ removed, must lie in one of the open half-spaces $\tilde\Pi$ or $\hat\Pi$. Assign a plus $(+)$ or minus $(-)$ sign to $\mathbf y$ according as its $\tilde h$-segment, with $\mathbf y$ removed, lies in $\tilde\Pi$ or $\hat\Pi$. Clearly, for any triangle of $\mathcal{T}$,  two of its vertices must have the same sign, and we may assume without loss of generality that $\mathbf x$ and ${\mathbf y}_{1}$ are two such vertices and are assigned a plus sign. Thus the endpoints $\tilde{\mathbf x}$ and $\tilde{\mathbf y}_{1}$ of the $\tilde h$-segments at $\mathbf x$ and ${\mathbf y}_{1}$ must lie in $\Lambda_{\tilde{\mathbf x}}$, and 
\[\tilde{\mathbf x}={\mathbf x}+{\mathbf t},\;\,
\tilde{\mathbf y}_{1}={\mathbf y}_{1}+{\mathbf t}.\]
As $l_{\tilde{\mathbf x}}$ is a 6-fold rotation axis for $C_{\tilde{\mathbf x}}(2R)$, the congruent copy $\Lambda_{\tilde{\mathbf x}}$ of $\Lambda$ in $\Pi_{\tilde{\mathbf x}}$ must contain a regular hexagon with one vertex at $\tilde{\mathbf y}_{1}$ and center at $\tilde{\mathbf x}$, which is a translate by $\mathbf t$ of the corresponding hexagon in $\Pi$ with one vertex at ${\mathbf y}_{1}$ and center at ${\mathbf x}$. It follows that $\Lambda_{\tilde{\mathbf x}}$ is actually a translate of $\Lambda$ by $\mathbf t$, and that $\Lambda_{\tilde{\mathbf x}}$ consists of the endpoints of the $\tilde h$-segments of the points in $\Lambda$. 
   \vskip 1cm

We next consider the longer segments $[{\mathbf y},\hat{\mathbf y}]$ of length $\hat h$, called $\hat h$-{\it segments\/}, associated with points $\mathbf y$ of $X$. Then, since the $\tilde h$-segments of the points in $\Lambda$ all lie in $\tilde\Pi$, the $\hat h$-segments of the points of $\Lambda$ must necessarily all lie in $\hat\Pi$. We claim that the $\hat h$-segments also lie perpendicular to $\Pi$. By the mutual equivalence of $2R$-clusters, it is  sufficient to verify this for the $\hat h$-segment at ${\mathbf x}$. 

We prove that $\hat{\mathbf x}$ lies on $l_{\mathbf x}$. The argument is similar as for $\tilde{\mathbf x}$. The projection of $\hat{\mathbf x}$ onto $\Pi$ falls into the Voronoi domain of $\mathbf x$ relative to $\Lambda$, since otherwise the distance of $\hat{\mathbf x}$ from a point $\mathbf y$ of $\Lambda$ distinct from $\mathbf x$ would be smaller than $\hat h$; this is impossible, by the definition of $\hat h$, taken relative to the $2R$-cluster of $\mathbf y$. Now,  if $\hat{\mathbf x}$ does not lie on $l_{\mathbf x}$, then a regular hexagon with one vertex at $\hat{\mathbf x}$ and center on $l_{\mathbf x}$ must appear and its side-length cannot exceed $\frac{r_1}{\sqrt 3}$. This is impossible, for the same reason as in the proof for $\tilde{\mathbf x}$.

Thus the $\hat h$-segments $[{\mathbf y},\hat{\mathbf y}]$ at points $\mathbf y$ of $\Lambda$ also lie perpendicular to $\Pi$, and their endpoints $\hat{\mathbf y}$ form a translate $\Lambda_{\hat{\mathbf x}}$ of $\Lambda$ given by $\Lambda_{\hat{\mathbf x}}=\Lambda+ {\mathbf x}{\hat{\mathbf x}}$. At this point we have constructed three layers of $X$ in parallel planes: $\Lambda$ in $\Pi$; $\Lambda_{\tilde{\mathbf x}}={\mathbf x}{\tilde{\mathbf x}}$ in ${\tilde\Pi}$, at distance $\tilde h$ from $\Pi$; and $\Lambda_{\hat{\mathbf x}}={\mathbf x}{\hat{\mathbf x}}$ in ${\hat\Pi}$, on the other side of $\Pi$, at distance $\hat h$ from $\Pi$.

Finally, the entire Delone set $X$ can be constructed layer by layer proceeding in either direction, beginning with $\Lambda_{\tilde{\mathbf x}}$ to obtain $\Lambda_{\mathbf u}$ with ${\mathbf u}:={\,\widehat{\tilde{\mathbf x}}}$, and similarly with $\Lambda_{\widehat{\mathbf x}}$ to obtain $\Lambda_{\mathbf w}$ with ${\mathbf w}:=\widetilde{\hat{\mathbf x}}$, and so on. 
Thus $X$ consists of parallel layers of (translates of) hexagonal lattices spaced at alternating distances $\tilde h$ and $\hat h$. In particular, up to translation, $X$ is a bi-lattice $\Gamma\cup(\Gamma +{\mathbf t})$, with $\mathbf t$ as above and with underlying 3-dimensional lattice $\Gamma$ spanned by the hexagonal lattice $\Lambda'$ and the vector ${\mathbf x}{\mathbf u}$ (of length ${\tilde h}+{\hat h}$). Thus $X$ is a regular system, and the proof for  Subcase 1a is complete. 
\vskip.03in

\noindent 
{\bf Subcase 1b.}\ The axes $l_{\tilde {\mathbf x}}$ and $l_{\mathbf x}$ are perpendicular.

We claim that this case cannot actually occur. In other words, there is no Delone set which meets this condition.

Suppose there exists a Delone set $X$ such that $l_{\tilde{\mathbf x}}$ and $l_{\mathbf x}$ are perpendicular. Then the plane $\Pi_{\tilde{\mathbf x}}$ at ${\tilde{\mathbf x}}$ must contain $l_{\mathbf x}$ and thus $\mathbf x$ itself. By the mutual equivalence of $2R$-clusters, $\Pi_{\tilde{\mathbf x}}$ must also contain a congruent copy $\Lambda_{\tilde{\mathbf x}}$ of $\Lambda$ with $X\cap \Pi_{\tilde{\mathbf x}}=\Lambda_{\tilde{\mathbf x}}$, and $\Lambda_{\tilde{\mathbf x}}$ must contain the two points $\mathbf x$ and ${\tilde{\mathbf x}}$ at distance ${\tilde h}\leq 2R$. By the 6-fold rotational symmetry of the cluster $C_{\tilde{\mathbf x}}(2R)$, there exists a regular hexagon in $\Lambda_{\tilde{\mathbf x}}$ with one vertex at $\mathbf x$ and with center at $\tilde{\mathbf x}$ obtained by rotation of $\mathbf x$ about $\tilde{\mathbf x}$. Let $\mathbf x '$ be one of two vertices of this hexagon adjacent to vertex $\mathbf x$. Then  $\mathbf x'$ is at distance $\frac{\tilde{h}}{2}$ from $\Pi$. There is point $\mathbf y$ of $\Lambda$ at distance at most $\frac{r_1}{\sqrt 3}$ from the orthogonal projection of $\mathbf x'$ on $\Pi$. Hence 
$$|\mathbf{yx}'|\leq \sqrt{\left(\frac{\tilde{h}}{2}\right)^2+\left(\frac{r_1}{\sqrt{3}}\right)^2}<\tilde{h}.$$
The last inequlity holds because $r_1\leq \tilde{h}$  due to the fact that a copy $\Lambda_{\tilde{x}}$ of $\Lambda$ with smallest interpoint distance $r_1$ contains two points $\mathbf x$ and $\tilde{\mathbf x}$ at distance $\tilde{h}$. Thus, we found points $\mathbf x'\notin \Lambda$ and $\mathbf y\in \Lambda$ at distance less than $\tilde{h}$, which is a contradiction.

It follows that the Delone set $X$ cannot exist, as claimed. This completes the proofs for Subcase 1b as well as Case 1.
\vskip.03in

\noindent 
{\bf Case 2.}\ $\tilde h =\hat h$ 
                  
The arguments in this case are similar to what we saw in Case 1, but the proof is quite a bit simpler and shorter. In particular we show that $X$ is (a translate of) a lattice. 

Set $h:=\tilde h=\hat h$. By arguments similar to those in Case 1, the points $\tilde{\mathbf x}$ and $\hat{\mathbf x}$ closest to $\mathbf x$ among all points of $X$ in $\tilde\Pi$ and $\hat\Pi$, respectively, must lie on $l_{\mathbf x}$ at distance $h$ from $\mathbf x$. Similarly, by the mutual equivalence of $2R$-clusters of $X$, the points $\tilde{\mathbf y}$ and $\hat{\mathbf y}$ associated with any point $\mathbf y$ in $\Lambda$ must lie on $l_{\mathbf y}$ at distance $h$ from $\mathbf y$. But as we saw earlier, the 6-fold axes $l_{\mathbf y}$ at the points $\mathbf y$ of $\Lambda$ all lie perpendicular to $\Pi$. 
Thus the set of endpoints ${\tilde{\mathbf y}}$ of the $\tilde h$-segments $[\mathbf y,{\tilde{\mathbf y}}]$ with $\mathbf y\in\Lambda$ forms a translate $\Lambda_{\tilde{\mathbf x}}$ of $\Lambda$, namely 
$\Lambda_{\tilde{\mathbf x}}=\Lambda +{{\mathbf x}{\tilde{\mathbf x}}}$. Similarly, the set of points ${\hat{\mathbf y}}$ with $\mathbf y\in\Lambda$ forms a translate $\Lambda_{\hat{\mathbf x}}$ of $\Lambda$, namely
$\Lambda_{\hat{\mathbf x}}=\Lambda +{{\mathbf x}{\hat{\mathbf x}}}$.
But $\tilde h=\hat h$, so ${{\mathbf x}{\hat{\mathbf x}}}=-{{\mathbf x}{\hat{\mathbf x}}}$. 

The entire Delone set $X$ can again be constructed layer by layer proceeding in either direction, much in the same way as in Subcase 1a. In this case $X$ is a lattice of the desired kind, up to translation, and thus $X$ is a regular system. This completes the proof for Case 2, and concludes the proof of the theorem.
\end{proof} 
\medskip

The next three theorems, all proved by Dolbilin, coincide with Theorems 3.1, 4.1 and 5.1 of~\cite{Dol2018}, respectively. We follow Sch\"onflies notation for the groups involved.

\begin{theorem}\label{thm:icosahedron}
The $2R$-cluster group of a Delone set in $\mathbb{R}^3$ with $N(2R)=1$ cannot  contain the rotation subgroup $I$ of a regular icosahedron.
\end{theorem}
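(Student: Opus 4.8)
The plan is to derive a contradiction from the assumption that some $2R$-cluster group $S_{\mathbf x}(2R)$ contains the rotation group $I$ of the regular icosahedron, by exploiting the fact that $I$ forces too many short interpoint distances in $X$. First I would fix a point $\mathbf x\in X$ and let $\mathbf y_1$ be a point of $C_{\mathbf x}(2R)$ off all rotation axes of $I$ that is closest to $\mathbf x$; set $r_1:=|\mathbf x\mathbf y_1|\le 2R$. The orbit of $\mathbf y_1$ under $I$ has $60$ points lying on the shell $H_{\mathbf x}(r_1)$ (a sphere of radius $r_1$), arranged as the vertices of a semiregular polyhedron (a "snub dodecahedron"-type configuration, or in degenerate positions a truncated icosahedron / icosidodecahedron / dodecahedron / icosahedron). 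The key quantitative point is that on the unit sphere, $60$ points invariant under $I$ always contain two points at spherical distance bounded away from the maximum; concretely, the minimal distance between two points of such an orbit is strictly less than $r_1$, in fact comfortably less. I would extract the explicit constant: for any $I$-orbit of size $60$ on a sphere of radius $r_1$, there exist two orbit points at Euclidean distance $< r_1$ — this is the analogue of inequality~\eqref{eqfora1} from Theorem~\ref{thm:rotation2}, and it is a finite spherical-geometry computation over the one-parameter family of generic orbits plus the finitely many special ones.

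Given that, the argument parallels Theorem~\ref{thm:rotation2} but uses $N(2R)=1$ to gain much more. Since all $2R$-clusters are equivalent, the closest off-(all-axes) point to $\mathbf y_1$ in $C_{\mathbf y_1}(2R)$ is again at distance exactly $r_1$. But $\mathbf y_1$ together with a suitable pair of nearby orbit points forms a triangle with a side of length $<r_1$, and (as in the proof of Theorem~\ref{thm:rotation2}) at least one such neighbor does not lie on the relevant axis through $\mathbf y_1$; this already contradicts minimality of $r_1$ at $\mathbf y_1$. To make this airtight I would argue that, because $I$ acts on the sphere $H_{\mathbf x}(r_1)$, the whole orbit of $\mathbf y_1$ lies in $C_{\mathbf x}(2R)\subseteq X$, so these are genuine points of $X$; then pick $\mathbf y'$ among the orbit points nearest to $\mathbf y_1$, note $|\mathbf y_1\mathbf y'|<r_1$, and observe $\mathbf y'\in C_{\mathbf y_1}(2R)$. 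Since $I\le S_{\mathbf y_1}(2R)$ as well (by equivalence of clusters the $2R$-cluster group at $\mathbf y_1$ is conjugate to that at $\mathbf x$, hence also contains a copy of $I$), the same off-axis/minimality reasoning applies, and $\mathbf y'$ being off the chosen axis through $\mathbf y_1$ contradicts the definition of $r_1$ — unless every nearest orbit point lies on that axis, which is impossible for a $60$-point $I$-orbit.

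The main obstacle I anticipate is the spherical-geometry lemma: verifying that \emph{every} $I$-invariant $60$-point configuration on a sphere has minimal chordal distance strictly below the radius, uniformly, and handling the degenerate orbits (where $\mathbf y_1$ drifts onto a $2$-, $3$-, or $5$-fold axis, shrinking the orbit to $30$, $20$, or $12$ points respectively). For those degenerate cases the orbit is a regular icosahedron, dodecahedron, icosidodecahedron, etc., and one checks directly that the nearest-neighbor distance is a small multiple of $r_1$ (e.g.\ for the icosahedron the edge is $\approx 1.05\,\rho$ where $\rho$ is the circumradius, which is \emph{not} below $\rho$ — so this degenerate case needs separate care: there one instead uses that the icosahedron's $12$ vertices still leave room, or that $\mathbf y_1$ was chosen off \emph{all} axes so this degeneracy does not occur for the minimal choice). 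I would therefore be careful to define $\mathbf y_1$ as the closest point lying off the union of all rotation axes of $I$, guaranteeing the generic $60$-point orbit, for which the minimal chord is genuinely $<r_1$; then the descent argument above runs, contradicting the Delone property (distances $\ge 1$) exactly as in Theorem~\ref{thm:rotation2}, or more directly contradicting the minimality of $r_1$ under $N(2R)=1$.
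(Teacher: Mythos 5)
First, note that the paper itself does not prove this statement; Theorem~\ref{thm:icosahedron} is imported verbatim from \cite[Theorem~3.1]{Dol2018}, so your argument has to stand on its own. Your key quantitative tool is correct and is indeed the right one: since the kissing number in $\mathbb{R}^3$ is $12$, any $13$ or more points on a sphere of radius $\rho$ contain two at distance $<\rho$, so a generic ($60$-point) $I$-orbit on $H_{\mathbf x}(r_1)$ certainly contains two points closer than $r_1$. The problem is what you do with those two points. Your $r_1$ is the minimal distance from the \emph{center} to an off-axis point of the cluster; two orbit points being mutually closer than $r_1$ contradicts nothing by itself. To contradict the definition of $r_1$ at $\mathbf y_1$ you need the close orbit point $\mathbf y'$ to avoid \emph{all $31$ rotation axes of the copy of $I$ sitting at $\mathbf y_1$} --- a different group from $I_{\mathbf x}$, with a different axis star. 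Your remark that this is ``impossible for a $60$-point $I$-orbit'' concerns the orbit structure under $I_{\mathbf x}$ and says nothing about the position of $\mathbf y'$ relative to the axes through $\mathbf y_1$; a single point (indeed even the pair $g\mathbf y_1,g^{-1}\mathbf y_1$ of nearest orbit points) can perfectly well land on that union of $31$ lines. A second, smaller gap: you never justify that $C_{\mathbf x}(2R)$ contains a point off all $31$ axes at all; full-dimensionality gives four affinely independent points, which can still all lie on the axis star. Finally, your fallback descent ``as in Theorem~\ref{thm:rotation2}'' stalls exactly where you flag trouble: an orbit on a $5$-fold axis is a regular icosahedron whose edge is $\approx 1.05$ times its circumradius, so no shrinking factor $<1$ is produced, and this is precisely the configuration that survives all of your filters.

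The way through is to aim the kissing-number bound at the \emph{global} nearest-neighbor distance rather than at the nearest off-axis point. Because $N(2R)=1$, every point of $X$ has the same distance $\delta\leq 2R$ to its nearest neighbor, so the minimum distance of $X$ is attained and equals $\delta$. The $I$-orbit of a nearest neighbor of $\mathbf x$ lies on $H_{\mathbf x}(\delta)$ with pairwise distances $\geq\delta$, hence has at most $12$ points; among $I$-orbits (sizes $12$, $20$, $30$, $60$) only the $12$-point orbit on the $5$-fold axes qualifies, and one checks the $\delta$-shell of $\mathbf x$ is then exactly the vertex set of a regular icosahedron $Q$ of circumradius $\delta$. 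By cluster equivalence the same holds at a vertex $\mathbf y$ of $Q$, giving an icosahedron $Q'$ centered at $\mathbf y$ with $\mathbf x$ as a vertex. The five vertices of $Q$ adjacent to $\mathbf y$ and the five vertices of $Q'$ adjacent to $\mathbf x$ lie on two congruent circles of radius $(2/\sqrt5)\delta$ in parallel planes only about $0.106\,\delta$ apart, both perpendicular to $\overline{\mathbf x\mathbf y}$; whatever the relative rotation, some vertex of one pentagon is within roughly $0.57\,\delta<\delta$ of a vertex of the other, contradicting the minimality of $\delta$. Your proposal never reaches this interlocking-icosahedra configuration, which is the actual crux of the theorem.
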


\begin{theorem}\label{thm:cube}
Let $X$ be a Delone set in $\mathbb{R}^3$ with $N(2R)=1$.
If the cluster group $S_X(2R)$ contains the rotation subgroup $O$ of a cube, then $X$ is a regular system and $S_X(2R)$ also contains the full symmetry group $O_h$ of a cube.
\end{theorem}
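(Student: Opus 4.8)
The plan is to prove that $X$ is, up to a translation, a scaled copy of one of the three cubic Bravais lattices --- the simple cubic, the body-centered cubic, or the face-centered cubic lattice. Both assertions then follow immediately: such an $X$ is a lattice and hence a regular system, and the site-symmetry group of any point in the symmetry group $S(X)$ of a cubic lattice is the full cubic point group $O_h$; since this site-symmetry group is contained in $S_X(2R)$, we obtain $O_h\subseteq S_X(2R)$ (in fact equality, as $S_X(2R)$ --- after fixing coordinates --- is a finite subgroup of $\mathrm{O}(3)$ containing $O$, and $O,O_h$ are the only such subgroups). Throughout we use $N(2R)=1$: all $2R$-cluster groups are conjugate, hence finite (being conjugate to subgroups of $\mathrm{O}(3)$ containing a copy of $O$), and the minimal interpoint distance $r_1$ of $X$ is attained at every point. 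We fix a point $\mathbf x\in X$ and coordinates so that $S_{\mathbf x}(2R)\supseteq O$ in standard position.

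The first step is to pin down the \emph{first shell} of $\mathbf x$, i.e.\ the set of points of $X$ at distance $r_1$ from $\mathbf x$. Since $C_{\mathbf x}(2R)$ is full-dimensional we have $r_1\le 2R$, so the first shell lies in $C_{\mathbf x}(2R)$ and is invariant under $S_{\mathbf x}(2R)\supseteq O$. Its points lie on the sphere of radius $r_1$ about $\mathbf x$ and are pairwise at distance $\ge r_1$; equivalently, the balls of radius $r_1/2$ about them are pairwise non-overlapping and all touch $B_{\mathbf x}(r_1/2)$, so the first shell has at most $12$ points by the solution of the kissing number problem in $\R^3$. As every $O$-orbit of a point $\ne\mathbf x$ has cardinality $6$, $8$, $12$, or $24$ (according as the point lies on a $4$-fold, $3$-fold, $2$-fold, or no axis of $O$), the first shell is a single such orbit of size $6$, $8$, or $12$ --- a union of two or more orbits would have to be two orbits of size $6$, but the $6$-point orbit at radius $r_1$ is unique. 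These three possibilities are exactly the vertex sets of a regular octahedron, cube, or cuboctahedron circumscribed about a sphere of radius $r_1$ at $\mathbf x$, with edge lengths $r_1\sqrt2$, $2r_1/\sqrt3$, $r_1$ respectively --- precisely the nearest-neighbor configurations of the simple cubic, body-centered cubic, and face-centered cubic lattices scaled to nearest-neighbor distance $r_1$.

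The heart of the proof, and the step I expect to be the main obstacle, is a \emph{propagation lemma}: in each of the three cases the first-shell figure at a point, together with the mutual equivalence of $2R$-clusters, forces the first-shell figures at its nearest neighbors to be the coherently oriented copies of the same figure appearing in the corresponding cubic lattice. Indeed, if $\mathbf y$ is a nearest neighbor of $\mathbf x$, an isometry realizing the equivalence of $C_{\mathbf x}(2R)$ and $C_{\mathbf y}(2R)$ and sending $\mathbf x$ to $\mathbf y$ carries the first shell at $\mathbf x$ onto that at $\mathbf y$; since $\mathbf x$ itself is a first-shell point of $\mathbf y$, the only remaining freedom is a rotation about the line $\overline{\mathbf{xy}}$, and one must show this rotation is trivial modulo the symmetries of the figure. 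In the cuboctahedral (face-centered cubic) case the contact figure of $12$ nearest neighbors is rigid and this should be comparatively short; in the octahedral (simple cubic) and cubical (body-centered cubic) cases the first-shell figure of a single vertex is not rigid, and the minimal-distance constraint between two overlapping first shells alone does not kill the rotational freedom, so one has to feed in second-shell data and carry out a careful case-by-case geometric analysis. Granting the propagation lemma, one iterates along the nearest-neighbor graph to conclude that $X$ contains a scaled copy $L$ of the relevant cubic lattice with nearest-neighbor distance $r_1$.

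Finally I would show $X=L$ up to translation by a covering-radius argument: each of the three cubic lattices, scaled to nearest-neighbor distance $r_1$, has covering radius strictly smaller than $r_1$ --- namely $\tfrac{\sqrt3}{2}\,r_1$, $\sqrt{\tfrac{5}{12}}\,r_1$, and $\tfrac{1}{\sqrt2}\,r_1$ for the simple cubic, body-centered cubic, and face-centered cubic lattices --- so a point of $X$ lying outside $L$ would be at distance less than $r_1$ from a point of $L\subseteq X$, contradicting the minimality of $r_1$. Hence $X$ is, up to translation, a scaled cubic Bravais lattice; in particular $X$ is a lattice and thus a regular system, and, as observed above, $S_X(2R)=O_h\supseteq O$. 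This proves both statements.
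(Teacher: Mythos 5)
Your overall strategy---classify the first shell as an $O$-invariant antipodal configuration of at most $12$ points (octahedron, cube, or cuboctahedron), propagate it to build a cubic Bravais lattice $L\subseteq X$, and then conclude $X=L$ from the covering radius being smaller than $r_1$---is a reasonable and, I believe, ultimately viable route; note that the present paper does not prove Theorem~\ref{thm:cube} at all but imports it from \cite[Theorem~4.1]{Dol2018}, so there is no in-paper argument to compare against. The first-shell classification is correct (the $O$-orbits of sizes $6$, $8$, $12$ are exactly the claimed polyhedra, and the kissing-number bound excludes everything else), and the covering-radius computations at the end are right.

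The problem is that the central step---what you call the propagation lemma---is not proved; you explicitly write ``granting the propagation lemma'' and move on. This is not a minor omission: it is the entire geometric content of the theorem. You yourself observe that in the simple-cubic and body-centered cases the first-shell figures at $\mathbf x$ and at a nearest neighbor $\mathbf y$ can be rotated relative to one another about $\overline{\mathbf{xy}}$ without violating the minimal-distance constraint (indeed, for two octahedra the relevant cross-distances are $r_1\sqrt{3-2\cos\theta}\ge r_1$ for every $\theta$), so some genuinely new input from the second shell or from the full $2R$-cluster equivalence is required, and none is supplied. Even in the face-centered case, where you assert the argument ``should be comparatively short,'' no argument is given, and the cuboctahedral contact configuration is famously not rigid as a sphere packing, so the rigidity you need must come from the $O$-invariance of the neighbor's shell together with the shared-neighbor structure---this has to be checked. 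Finally, even with a correct one-step propagation, the iteration ``along the nearest-neighbor graph'' needs a word about global consistency (that the locally forced frames agree around cycles) before you may conclude $L\subseteq X$. As written, the proposal is a plan with its decisive lemma deferred, not a proof.
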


\begin{theorem}\label{thm:tetrahedron}
Let $X$ be a Delone set in $\mathbb{R}^3$ with $N(2R)=1$. If $S_X(2R)$ contains the rotation subgroup $T$ of a regular tetrahedron, then $X$ is a regular system and $S_X(2R)$ also contains the full symmetry group $T_d$ of a  regular tetrahedron.
\end{theorem}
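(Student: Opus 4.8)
The plan is to reconstruct $X$ locally, in the spirit of the proof of Theorem~\ref{thm:n=6}, and to show that the presence of the four $3$-fold and three mutually perpendicular $2$-fold axes of $T$ in $S_X(2R)$, together with the rigidity coming from $N(2R)=1$, forces $X$ to be one of a short explicit list of classical regular point systems. Observe first that none of the general tools applies directly: $|T|=12$ and $m(T)=4$, so the Tower Bound (Theorem~\ref{thm:tower}) would only yield regularity from $N(10R)=1$, whereas we are granted merely $N(2R)=1$; and $T$ does not contain the central inversion, so Theorem~\ref{thm:antipodal} is not available either. Hence a direct geometric argument is needed.

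Fix $\mathbf x\in X$ and put $G:=S_{\mathbf x}(2R)\supseteq T$. Let $r_1$ be the distance from $\mathbf x$ to its nearest neighbour(s) in $X$; since $C_{\mathbf x}(2R)$ is full-dimensional, $r_1\le 2R$, so the nearest-neighbour shell $H_{\mathbf x}(r_1)$ is $G$-invariant and in particular a union of $T$-orbits. As a point of $\mathbb R^3\setminus\{\mathbf x\}$ has $T$-stabilizer of order $3$, $2$, or $1$, each such orbit consists of either $4$ points at the vertices of a regular tetrahedron centred at $\mathbf x$ (orbit on a $3$-fold axis), $6$ points at the vertices of a regular octahedron (orbit on a $2$-fold axis), or $12$ points in a chiral arrangement. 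The first key step is the Theorem~\ref{thm:n=6}-type matching: by equivalence of the $2R$-clusters at $\mathbf x$ and at a nearest neighbour $\mathbf y_1$, the centre $\mathbf x$ is itself a nearest neighbour of $\mathbf y_1$, so the $T$-orbit of $\mathbf x$ for the conjugate copy of $T$ at $\mathbf y_1$ lies on $H_{\mathbf y_1}(r_1)$; comparing the two shell pictures forces the nearest-neighbour configurations of adjacent points to interlock, and iterating this propagates the configuration to a whole connected patch, exactly as the hexagonal layer was propagated in Theorem~\ref{thm:n=6}.

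One then runs the case analysis on $H_{\mathbf x}(r_1)$. The interlocking constraints, together with the elementary fact that no two shell points may lie closer than $r_1$, eliminate all but a few possibilities for the shell and show that the set of points reachable from $\mathbf x$ along chains of nearest-neighbour steps is (a finite union of translates of) a fixed lattice, which the definition of $r_1$ then forbids enlarging; hence $X$ equals that configuration. A single $6$-orbit (octahedron) yields the simple cubic lattice; a single $12$-orbit that survives the distance constraint (a cuboctahedron) yields the face-centred cubic lattice; two $4$-orbits forming a cube yield the body-centred cubic lattice; and a single $4$-orbit (regular tetrahedron) yields the diamond packing, the eclipsed lonsdaleite-type stacking being excluded because the \emph{whole} $2R$-cluster, not just the first shell, must be $T$-invariant. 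In every surviving case $X$ is an orbit of a point under a crystallographic group, hence a regular system, and the stabilizer of a point in the symmetry group of that system contains $T_d$ (it equals $T_d$ for diamond and $O_h$ for the three cubic lattices); by the stabilization statement of Theorem~\ref{thm:local} this gives $S_X(2R)\supseteq T_d$. Any sub-case producing extra symmetry can moreover be closed with results already at hand — if $-I\in G$ then $X$ is locally antipodal and Theorem~\ref{thm:antipodal} gives regularity, and if $O\subseteq G$ then Theorem~\ref{thm:cube} applies — while the classification just described is precisely what rules out the ``missing-reflection'' groups $G=T$ and $G=T_h$.

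\textbf{Main obstacle.} The heart of the proof, and the place I expect real difficulty, is the case analysis in the third step: above all the generic $12$-orbit case, where the first shell is least constraining and one likely has to invoke the second shell and the $3$-fold axes — in the manner of the exclusion of $I$ in Theorem~\ref{thm:icosahedron} — either to force the cuboctahedral (face-centred cubic) picture or to reach a contradiction; and, throughout, the careful verification that the propagation is globally consistent and admits no amendment, so that $X$ really is pinned down to one of the listed regular systems.
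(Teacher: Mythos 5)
The paper does not actually prove Theorem~\ref{thm:tetrahedron}: it is imported verbatim as Theorem~5.1 of~\cite{Dol2018} (together with Theorems~\ref{thm:icosahedron} and~\ref{thm:cube}), so there is no in-paper argument to compare yours against. Your framing is sound as far as it goes --- the observation that neither the Tower Bound nor Theorem~\ref{thm:antipodal} applies, the decomposition of the first shell $H_{\mathbf x}(r_1)$ into $T$-orbits of sizes $4$, $6$, $12$, and the expected answers (simple cubic, bcc, fcc, diamond, with point stabilizers containing $T_d$) are all correct and are the right skeleton. But what you have written is a plan, not a proof: every step that actually requires work is asserted rather than carried out, as you yourself acknowledge.

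Three gaps in particular would have to be closed. First, the ``interlocking'' step: unlike in Theorem~\ref{thm:n=6}, where the hexagon in the shell forces the $6$-fold axis at a neighbour to be perpendicular to the same plane by a short distance argument, here the isometry matching $C_{\mathbf x}(2R)$ to $C_{{\mathbf y}_1}(2R)$ does not a priori control the relative orientation of the two copies of $T$, and you give no mechanism that pins it down. Second, the generic $12$-orbit is a two-parameter family; eliminating everything except the cuboctahedron (and handling mixed shells such as a $6$-orbit plus a $4$-orbit, which at $10$ points is not excluded by the kissing number) is precisely the hard case analysis, and ``invoke the second shell'' is not an argument. Third, even granting that nearest-neighbour chains from $\mathbf x$ generate one of the four candidate configurations, you must still show $X$ contains nothing else; ``the definition of $r_1$ forbids enlarging'' is false as stated for the diamond configuration, which is not saturated --- extra points at distance $\ge r_1$ from all diamond points exist and must instead be excluded by showing they would destroy the $T$-invariance or the mutual equivalence of some $2R$-cluster. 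Until these are done, the proposal does not establish the theorem.
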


The tower bound criterion of Theorem~\ref{thm:tower} was used in \cite{DLS1998}  to obtain the general upper bound for the regularity radius $\hat\rho_d$ in terms of $d$ and $R$ mentioned in the introduction. However, the tower bound may not be sharp for specific groups. For example, if $d=3$ and $S_X(2R)$ is the full symmetry group $O_h$ of a  cube (of order 48), then the tower bound criterion only shows that the equivalence of $14R$-clusters implies regularity of $X$, whereas Theorem~\ref{thm:cube} assures that already the equivalence of $2R$-clusters is enough to imply regularity in this case.

\section{The groups $S_8$ and $D_{4d}$}
\label{S8D4d}

In this section, we deal with two specific finite subgroups of $\rm{O}(3)$, namely $S_8$ and $D_{4d}$ (in Sch\"onflies notation). We show that neither $S_8$ nor $D_{4d}$ occurs as the $2R$-cluster group of a Delone set $X$ in $\R^3$ with $N(2R)=1$.

The group $S_8$ is the subgroup of $\rm{O}(3)$ generated by a rotatory reflection $\sigma$,  defined as the composition of the reflection in a plane $\Pi$ and a rotation by $\frac\pi4$ about a line $\ell$ perpendicular to $\Pi$. With a proper choice of orthogonal coordinate system (so that $\Pi$ is the plane $z=0$, and $\ell$ the line $x=y=0$), and with the rotation component of $\sigma$ in counterclockwise orientation if seen from the half-space $z>0$), the group $S_8$ is given as 
$$S_8:=\left\langle \left(
\begin{array}{ccc}
\cos \frac\pi4 & -\sin\frac\pi4&0\\ 
\sin \frac\pi4 & \cos\frac\pi4&0\\
0&0&-1
\end{array}
\right) \right\rangle.$$
As an abstract group, $S_8$ is isomorphic to a cyclic group of order $8$, but as a geometric group it  differs from the group $C_8$ (again, in Sch\"onflies notation) generated by an eight-fold rotation about a line, which also is a finite subgroup of $\rm{O}(3)$ isomorphic to a cyclic group of order $8$.

The group $D_{4d}$ is generated by $S_8$ and the reflection in a plane $\Pi_1$ which is perpendicular to $\Pi$ and contains $\ell$. Then, with a proper choice of coordinates,  
$$D_{4d}:=\left\langle \left(
\begin{array}{ccc}
\cos \frac\pi4 & -\sin\frac\pi4&0\\
\sin \frac\pi4 & \cos\frac\pi4&0\\
0&0&-1
\end{array}
\right),
\left(
\begin{array}{ccc}
1&0&0\\
0&-1&0\\
0&0&1
\end{array}
\right)
 \right\rangle.$$
Geometrically, the group $D_{4d}$ can be seen as the group of symmetries of a ``regular" antiprism with a square base. (Probably the correct term would be a {\it vertex-transitive} antiprism, but we prefer to call it a regular antiprism meaning a right antiprism with congruent regular bases and a vertex-transitive symmetry group.)

\begin{theorem}\label{thm:s8}
Let $X$ be a Delone set in $\mathbb{R}^3$ with $N(2R)=1$. Then $S_X(2R)$ is not equivalent (conjugate in $\rm{Iso(3)}$) to $S_8$ or $D_{4d}$.
\end{theorem}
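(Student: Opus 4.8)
The plan is to rule out both $S_8$ and $D_{4d}$ as $2R$-cluster groups by deriving a contradiction with the lower bound on interpoint distances, exactly in the spirit of the proof of Theorem~\ref{thm:rotation2}. The key structural observation is that both groups contain the rotatory reflection $\sigma$ of order $8$, and more importantly, both contain the $4$-fold \emph{rotation} $\sigma^2$ about the axis $\ell$. So fix a point $\mathbf x\in X$ with $S_{\mathbf x}(2R)\cong D_{4d}$ (or $\cong S_8$), let $\ell=\ell_{\mathbf x}$ be this axis, and let $\Pi$ be the plane through $\mathbf x$ perpendicular to $\ell$. First I would locate the nearest point of $X$ to $\mathbf x$ off the axis $\ell$; call it $\mathbf y_1$ and set $r_1:=|\mathbf x\mathbf y_1|\le 2R$. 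Since $\sigma$ maps $C_{\mathbf x}(2R)$ to itself and $\sigma$ acts on the orbit of $\mathbf y_1$, the shell $H_{\mathbf x}(r_1)$ contains the full $\sigma$-orbit of $\mathbf y_1$: either a regular $8$-gon (if $\mathbf y_1$ lies in $\Pi$, since then $\sigma$ fixes the plane of the orbit) or, in general, a pair of squares symmetric with respect to $\Pi$ and staggered by $\pi/4$ (the vertex set of a square antiprism inscribed in a sphere of radius $r_1$).

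The next step is to bound the smallest interpoint distance inside this $\sigma$-orbit. If the orbit is a regular octagon of circumradius $\le r_1$, its side length is at most $2r_1\sin(\pi/8)<0.766\,r_1$, so a neighboring vertex $\mathbf y_1'$ of $\mathbf y_1$ in the orbit lies in $C_{\mathbf x}(2R)\subseteq$ is at distance $<0.766\,r_1<r_1$ from $\mathbf y_1$, and it does not lie on the axis through $\mathbf y_1$ perpendicular to... — wait, here one must be careful that the relevant axis is $\ell_{\mathbf y_1}$, the axis of $S_{\mathbf y_1}(2R)$, which need not be $\ell$. This is exactly where the argument of Theorem~\ref{thm:rotation2} applies: among the two (or more) orbit-neighbors of $\mathbf y_1$ at distance $<r_1$, together with $\mathbf y_1$ they form a non-collinear triple, so at least one of them lies off $\ell_{\mathbf y_1}$, giving a point of $C_{\mathbf y_1}(2R)$ off $\ell_{\mathbf y_1}$ at distance $<r_1$ from $\mathbf y_1$. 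By mutual equivalence of $2R$-clusters, $r_1$ is also the minimal off-axis distance at $\mathbf y_1$, contradiction. If instead the $\sigma$-orbit is an antiprism, one computes the minimal edge of a square antiprism inscribed in a sphere of radius $r_1$: the staggered ``diagonal'' edges and the square edges both have length a definite fraction of $r_1$ strictly less than $r_1$ — indeed both are bounded by roughly $0.9\,r_1$ at worst and can be made explicit — again forcing a non-collinear triple at distance $<r_1$ and the same contradiction. The only remaining case is that the $\sigma$-orbit of $\mathbf y_1$ is degenerate, i.e. $\mathbf y_1$ lies on $\ell$, contradicting the choice of $\mathbf y_1$ off the axis; but then the $4$-fold rotation $\sigma^2$ still produces a square orbit of $\mathbf y_1$ of circumradius $\le r_1$ with edge $r_1\sqrt2 > r_1$ — so this sub-case needs the finer comparison, and I expect the sharp estimate to come from combining $\sigma^2$ (the square) with the reflection/rotatory-reflection to relate the two interleaved squares.

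The main obstacle is making the numerical estimate on the antiprism watertight: a square antiprism inscribed in a sphere has a one-parameter family of shapes (parametrized by the ``twist height''), and for the flattest ones the square edges approach $r_1\sqrt2$, which is \emph{larger} than $r_1$, so a naive ``nearest orbit-neighbor'' bound is not automatically $<r_1$. The resolution is that the flattest antiprisms have \emph{short} staggered edges connecting the two squares — when the square edge is near $r_1\sqrt2$ the two parallel square faces nearly coincide, so consecutive points across the two faces are at distance near $0$ — and one checks that the minimum over the two edge-types of a sphere-inscribed square antiprism is always strictly below $r_1$, with a clean bound (the worst case is a cube-like configuration, edge $\approx 2r_1/\sqrt3 \approx 1.155\,r_1$ — again too big, so the real worst case must be analyzed and the claim is that \emph{one} of the two edge lengths is $<r_1$ always, attaining parity only in a configuration that is itself excluded because it forces additional symmetry, namely a $4$-fold axis perpendicular to $\ell$ or a mirror containing $\ell$ and a vertex, both of which would enlarge $S_{\mathbf y_1}(2R)$ beyond $D_{4d}$'s conjugacy class and again contradict $N(2R)=1$). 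Once the estimate is secured, the conclusion is immediate: no such $\mathbf y_1$ can exist, so there is no off-axis point within $2R$ of $\mathbf x$, contradicting full-dimensionality of $C_{\mathbf x}(2R)$. Hence $S_X(2R)$ cannot be conjugate to $S_8$ or $D_{4d}$, and since $D_{4d}\supset S_8$ the two cases can be handled by a single argument built on the common $4$-fold rotation $\sigma^2$ supplemented, where needed, by the order-$8$ element $\sigma$.
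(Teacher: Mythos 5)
There is a genuine gap, and it is exactly at the point you flag as the ``main obstacle.'' Your proposed resolution --- that for a square antiprism inscribed in a sphere of radius $r_1$, at least one of the two edge types is always shorter than $r_1$, with equality only in a degenerate configuration forcing extra symmetry --- is false. Writing the $\sigma$-orbit of $\mathbf y_1$ as $(\pm a,0,b),(0,\pm a,b),(\pm a/\sqrt2,\pm a/\sqrt2,-b)$ with $a^2+b^2=r_1^2$, the square edge is $a\sqrt2$ and the staggered edge is $\sqrt{a^2(2-\sqrt2)+4b^2}$; both are $\geq r_1$ precisely when $a^2\geq b^2$ and $a^2(1-\sqrt2)+3b^2\geq0$, which is an open two-parameter region, not an isolated case. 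For instance the \emph{uniform} square antiprism has all edges equal to about $1.216$ times its circumradius. So the single-orbit ``nearest orbit-neighbor'' contradiction simply does not materialize, and no amount of combining $\sigma$ with $\sigma^2$ within one orbit fixes this: there really do exist $8$-point $\sigma$-orbits on a sphere of radius $r_1$ with all pairwise distances $\geq r_1$.

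The paper's proof needs a genuinely different idea to get past this. It takes $r_1$ to be the \emph{global} minimum distance of $X$ (not the minimal off-axis distance), notes that the $r_1$-shell of $\mathbf x$ is a single antiprism $P_{\mathbf x}$ (by the kissing-number bound), and then uses the equivalence of $2R$-clusters to produce a \emph{second} antiprism $P_{\mathbf y}$ centered at the nearest neighbor $\mathbf y$, which must contain $\mathbf x$ as a vertex. The contradiction comes from the interaction of the two antiprisms: a constrained optimization (Lemma~\ref{lem:antiprisms}, verified numerically) shows that some vertex of $P_{\mathbf x}$ and some vertex of $P_{\mathbf y}$ are forced to be at distance $<r_1$. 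Your proposal contains no analogue of this two-cluster interaction. In addition, the cases where some or all points of the $r_1$-shell lie on the axis $\ell$ require separate arguments that your sketch does not supply: the mixed case is handled in the paper by a global counting argument on a two-colored graph of nearest-neighbor pairs, and the all-on-axis case needs a second, more delicate two-antiprism estimate (Lemma~\ref{lem:antiprisms2}) applied at the next distance $r_2$. As written, your argument would not close any of the three cases.
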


\begin{proof}
Suppose $S_X(2R)$ is equivalent to $S_8$ or $D_{4d}$. Without loss of generality we may assume that ${\mathbf x}:=o\in X$, and that $S_X(2R)=S_{\mathbf x}(2R)$ coincides with $S_8$ or $D_{4d}$, with $S_8$ and $D_{4d}$ in the above form. Thus $S_{\mathbf x}(2R)$ contains $S_{8}=\langle\sigma\rangle$, with 
$$\sigma=\left(
\begin{array}{ccc}
\cos \frac\pi4 & -\sin\frac\pi4&0\\
\sin \frac\pi4 & \cos\frac\pi4&0\\
0&0&-1
\end{array}
\right).$$
Let $\ell$ be the line $x=y=0$, and $\Pi$ the plane $z=0$. 

Suppose ${\mathbf y}\neq {\mathbf x}$ is a point of $X$ closest to $\mathbf x$. Let $r_1:=|\mathbf{xy}|$, then $r_1=1$ according to our scaling of $X$, and let $X_1:=H_{\mathbf x}(r_1)$ be the $r_1$-shell of $\mathbf x$ in $X$. Then there are three possibilities for the shell $X_1$, and in each case we can arrive at a contradiction to $r_1$ being the smallest interpoint distance of $X$.

{\bf Case 1.} {\it No point of $X_1$ belongs to $\ell$}. Then the shell $X_1$ splits into orbits under the action of the subgroup $\langle\sigma\rangle$. Since no point of $X_1$ belongs to $\ell$, each such orbit has exactly eight points. If $X_1$ contains more than one orbit, then $X_1$ has at least 16 points, and there are two points of $X_1$ at distance less than $r_1$, as the kissing number in $\R^3$ is 12 (see \cite[Thm.~1.1.1]{Bez2010} for example). This contradicts our assumption on $X$ that any two $2R$-clusters are equivalent.

Thus, $X_1$ is the orbit of a single point $\mathbf y$ under the action of $\langle \sigma \rangle$, so $X_1=\langle\sigma\rangle\cdot\mathbf y$. If a point of $X_1$ lies on $\Pi$, then $\Pi$ contains all eight points of $X_1$ and again some of these points are closer together than~$r_1$. Thus, the eight points of $X_1$ form the vertices of a regular antiprism $P_{\mathbf x}$ centered at $\mathbf x$ (see Fig. \ref{fig:antiprism}).

\begin{figure}[!ht]
\begin{center}
\includegraphics[width=0.5\textwidth]{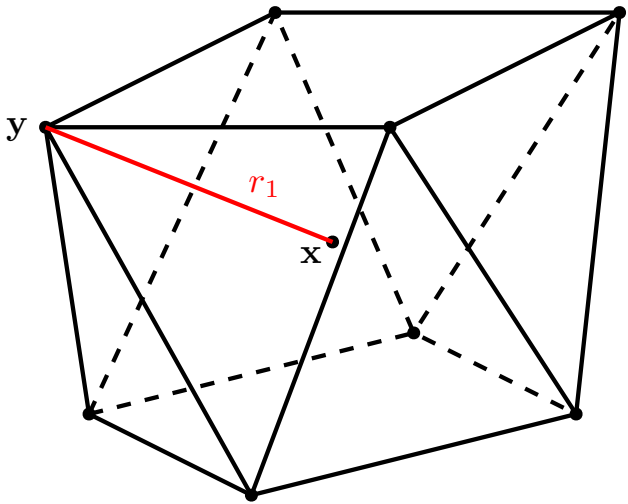}
\caption{The point $\mathbf x$ and the antiprism $P_{\mathbf x}$ generated by the point $\mathbf y$ at the distance $r_1$ from $\mathbf x$.}
\label{fig:antiprism}
\end{center}
\end{figure}

Since the $2R$-cluster of $\mathbf y$ is equivalent to that of $\mathbf x$, we can find a similar antiprism $P_{\mathbf y}$ centered at~$\mathbf y$, and since $|\mathbf{xy}|=r_1$, the point $\mathbf x$ is among the vertices of $P_{\mathbf y}$. In Lemma~\ref{lem:antiprisms} of  the Appendix we show that then there are two vertices of the antiprisms $P_{\mathbf x}$ and $P_{\mathbf y}$ that are at distance less than~$r_1$, which gives a contradiction for this case.

{\bf Case 2.} {\it There are points of $X_1$ on $\ell$, and there are points of $X_1$ not on $\ell$}. As in the previous case, the points of $X_1$ that are not on $\ell$ form the vertices of a single antiprism denoted $P_{\mathbf x}$; we call these points the red points of $X_1$. The shell $X_1$ also contains exactly two points on $\ell$ at distance $r_1$ from $X$; we call these points the blue points of $X_1$. 

We now construct an edge-colored directed graph with vertex set $X$ and with set of directed edges given by the ordered pairs of points of $X$ at distance $r_1$. From every point of $X$, draw red arrows to the eight points of its $r_1$-shell that correspond to red points of $X_1$, and draw blue arrows to the two points of its $r_1$-shell that correspond to blue points of $X_1$. Now, in a large ball, the number of red arrows is approximately four times the number of blue arrows, so there is a pair of points ${\mathbf x}'$ and ${\mathbf y}'$ of $X$ at distance $r_1$ such that both arrows between these two points are red. Using Lemma~\ref{lem:antiprisms} for the antiprisms $P_{{\mathbf x}'}$ and $P_{{\mathbf y}'}$ associated with ${\mathbf x}'$ and ${\mathbf y}'$ in the same manner as $P_{\mathbf x}$ is with $\mathbf x$, we then arrive at a contradiction as before.

{\bf Case 3.} {\it All points of $X_1$ lie on $\ell$.} Without loss of generality we can assume that ${\mathbf y}=(0,0,r_1)$, so that $X_1$ consists of $\mathbf y$ and $-\mathbf{y}=(0,0,-r_1)$.

Let $\mathbf z$ be a point in $C_{\mathbf x}(2R)$ off $\ell$ which has the smallest distance from $\mathbf x$ among all the points in  $C_{\mathbf x}(2R)$ off $\ell$, and let $r_{2}:=|{\mathbf x}{\mathbf z}|$. Recall that the $2R$-clusters in $\mathbf x$ are full-dimensional, so $C_{\mathbf x}(2R)$ indeed contains points off $\ell$. 

If $\mathbf z$ lies in the plane $\Pi$, then the orbit $\langle\sigma\rangle\cdot\mathbf z$ gives a regular octagon in $\Pi$ with circumradius $r_2$. In this case $\mathbf z$, together with the two neighboring vertices in the octagon, forms a non-collinear subset of $X$. As the edge length of the octagon is an interpoint distance of $X$ smaller than the circumradius, this is a contradiction to the property that all point of $X$ closer to $\mathbf x$ than $r_2$ must lie on~$\ell$. Therefore, $\mathbf z$ cannot lie in $\Pi$, and the orbit $\langle\sigma\rangle\cdot {\mathbf z}$ is the vertex-set of a regular antiprism centered at $\mathbf x$; we denote this antiprism by $P'_{\mathbf x}$.

Without loss of generality we can assume that $\mathbf z$ lies on the same side of $\Pi$ as $\mathbf y$. If $\mathbf z$ does not lie on the perpendicular bisector of the segment $[\mathbf x,\mathbf y]$, then Lemma~\ref{lem:antiprisms2} of the Appendix implies that there exist two vertices ${\mathbf u}_1$ and ${\mathbf u}_2$ of the antiprism $P'_{\mathbf y}$ at $\mathbf y$, both in the same base, such that $|\mathbf{z}{\mathbf u}_1|<r_2$ and $|{\mathbf z}{\mathbf u}_2|<r_2$. By the case assumption, the three points $\mathbf z$, ${\mathbf u}_1$, ${\mathbf u}_2$ must be  collinear and lie on the line in the $2R$-cluster of $\mathbf z$ corresponding to $\ell$. However, this is impossible, as the line $\overline{{\mathbf u}_1{\mathbf u}_2}$ is parallel to $\Pi$ and $\mathbf z$ must lie on the same side of the perpendicular bisector of $[\mathbf x,\mathbf y]$ as~$\Pi$. Thus, the only possibility is that $\mathbf z$ is equidistant from $\mathbf x$ and $\mathbf y$ and thus lies on the bisector.

In this case, $P'_{\mathbf x}$ and $P'_{\mathbf y}$ must share a common base. Otherwise, we can use the same arguments as in Lemma \ref{lem:antiprisms2} to find two vertices ${\mathbf u}_1$ and ${\mathbf u}_2$ of one base of $P'_{\mathbf y}$ such that $|\mathbf{z}{\mathbf u}_1|<r_2$ and $|{\mathbf z}{\mathbf u}_2|<r_2$, and then arrive at a contradiction as before.

Thus $\mathbf z$ is a common vertex of $P'_{\mathbf x}$ and $P'_{\mathbf y}$. Note that the $r_2$-shell of $\mathbf z$ contains two points of $X$ at distance $r_1$ from each other, namely $\mathbf x$ and $\mathbf y$. Therefore, the $r_2$-shell of $\mathbf x$ must contain a pair of such points as well. Without loss of generality, we can assume that $\mathbf z$ is one such point in a pair of points ${\mathbf z},{\mathbf z}'$ at distance $r_1$ in the $r_2$-shell of $\mathbf x$. There are two possibilities. If ${\mathbf z}'$ lies on~$\ell$, then ${\mathbf z}'$ must lie between $\mathbf x$ and $\mathbf y$ and hence have smaller distance from $\mathbf x$ or $\mathbf y$ than $r_1$; this is impossible. On the other hand, if ${\mathbf z}'$ is a vertex of $P'_{\mathbf x}$, then another vertex of $P'_{\mathbf x}$ is also at distance $r_1$ from~$\mathbf z$, which contradicts the property that all points of $X$ at distance $r_1$ from $\mathbf z$ must lie on a line through~$\mathbf z$. 
\end{proof}

\section{New upper bound for the regularity radius in $\R^3$}
\label{10rbound}

In this section, we use the results of the previous sections in conjunction with the well-known list of finite subgroups of $\rm{O}(3)$ to obtain a new upper bound for the regularity radius in $\R^3$. We begin by listing all finite subgroups of $\rm{O}(3)$ in Sch\"onflies notation and briefly discuss their relevant properties. We give only one representative in coordinate form when applicable, and describe all subgroups geometrically as well.

\subsection*{List of finite subgroups of $\rm{O}(3)$}
\begin{enumerate}
\item Group $C_n$ is generated by an $n$-fold rotation about a line. For a certain choice of coordinate system, 
$$C_n=
\left\langle \left(
\begin{array}{ccc}
\cos \frac{2\pi}{n} & -\sin\frac{2\pi}{n}&0\\
\sin \frac{2\pi}{n} & \cos\frac{2\pi}{n}&0\\
0&0&1
\end{array}
\right) \right\rangle.$$
The order of $C_n$ is $n$. 

\item Group $S_n$ is generated by a mirror rotation of order $n$, i.e. a composition of an $n$-fold rotation about a line and a mirror reflection in the plane orthogonal to this line. For a certain choice of coordinate system, 
$$S_n=
\left\langle \left(
\begin{array}{ccc}
\cos \frac{2\pi}{n} & -\sin\frac{2\pi}{n}&0\\
\sin \frac{2\pi}{n} & \cos\frac{2\pi}{n}&0\\
0&0&-1
\end{array}
\right) \right\rangle.$$
The order of $S_n$ is $n$ for even $n$, and $2n$ for odd $n$.

\item Group $C_{nh}$ is generated by an $n$-fold rotation about a line and a mirror symmetry in the plane perpendicular to this line. For a certain choice of coordinate system,  
$$C_{nh}=
\left\langle \left(
\begin{array}{ccc}
\cos \frac{2\pi}{n} & -\sin\frac{2\pi}{n}&0\\
\sin \frac{2\pi}{n} & \cos\frac{2\pi}{n}&0\\
0&0&1
\end{array}
\right),
\left(
\begin{array}{ccc}
1 & 0&0\\
0 & 1&0\\
0&0& -1
\end{array}
\right) \right\rangle.$$
The order of $C_{nh}$ is $2n$. If $n$ is odd, then $C_{nh}=S_n$.

\item Group $C_{nv}$ is generated by an $n$-fold rotation about a line and a mirror symmetry in a plane incident to this line. For a certain choice of coordinate system,  
$$C_{nv}=
\left\langle \left(
\begin{array}{ccc}
\cos \frac{2\pi}{n} & -\sin\frac{2\pi}{n}&0\\
\sin \frac{2\pi}{n} & \cos\frac{2\pi}{n}&0\\
0&0&1
\end{array}
\right),
\left(
\begin{array}{ccc}
1 & 0&0\\
0 & -1&0\\
0&0& 1
\end{array}
\right) \right\rangle.$$
The order of $C_{nv}$ is $2n$.

\item Group $D_{n}$ is generated by an $n$-fold rotation about a line, and a reflection in a line (half-turn) which is orthogonal to the first line. For a certain choice of coordinate system, 
$$D_{n}=
\left\langle \left(
\begin{array}{ccc}
\cos \frac{2\pi}{n} & -\sin\frac{2\pi}{n}&0\\
\sin \frac{2\pi}{n} & \cos\frac{2\pi}{n}&0\\
0&0&1
\end{array}
\right),
\left(
\begin{array}{ccc}
1 & 0&0\\
0 & -1&0\\
0&0& -1
\end{array}
\right) \right\rangle.$$
The order of $D_{n}$ is $2n$. 

\item Group $D_{nh}$ is generated by the group $D_n$ and a reflection in the horizontal plane. For a certain choice of coordinate system, 
$$D_{nh}=
\left\langle \left(
\begin{array}{ccc}
\cos \frac{2\pi}{n} & -\sin\frac{2\pi}{n}&0\\
\sin \frac{2\pi}{n} & \cos\frac{2\pi}{n}&0\\
0&0&1
\end{array}
\right),
\left(
\begin{array}{ccc}
1 & 0&0\\
0 & -1&0\\
0&0& -1
\end{array}
\right),
\left(
\begin{array}{ccc}
1 & 0&0\\
0 & 1&0\\
0&0& -1
\end{array}
\right) \right\rangle.$$
The order of $D_{nh}$ is $4n$. Geometrically $D_{nh}$ is the group of all symmetries of a regular prism over an $n$-gon (which is not a cube if $n=4$).

\item Group $D_{nd}$ is generated by the group $D_n$, and a reflection in a plane which contains the axis of rotation for $D_n$ and bisects the angle between two lines of reflection of $D_n$. 
The order of $D_{nd}$ is $4n$. Geometrically, if $n\geq 4$, $D_{nd}$ is the group of all symmetries of a regular antiprism over an $n$-gon.

\item The group $T$ of rotational symmetries of a regular tetrahedron, and the group $T_d$ of all symmetries of a regular tetrahedron.

\item The group $T_h$ generated by the group $T_d$ and the central symmetry in $o$.

\item The group $O$ of rotational symmetries of a cube, and the group $O_h$ of all symmetries of a cube.

\item The group $I$ of rotational symmetries of a regular icosahedron, and the group $I_h$ of all symmetries of a regular icosahedron.
\end{enumerate}

Next, we use this classification to improve the best known upper bound for the regularity radius in $\R^3$.

\begin{theorem}
If $X$ is a Delone set in $\R^3$ with $N(10R)=1$, then $X$ is a regular set. Therefore, $\hat\rho_3\leq 10R$.
\end{theorem}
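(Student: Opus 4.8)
The plan is to combine the three ingredients already assembled in the paper—Shtogrin's rotation bound (Theorem~\ref{thm:rotation}), the Tower Bound (Theorem~\ref{thm:tower}), and the various case-by-case regularity results for specific $2R$-cluster groups—into a single case analysis over the finite list $L_6$. First I would observe that by Theorem~\ref{thm:rotation}, any $2R$-cluster group $G=S_X(2R)$ of a Delone set $X$ in $\mathbb{R}^3$ with $N(2R)=1$ lies in $L_6$, the set of finite subgroups of $\mathrm{O}(3)$ whose rotation axes all have order at most $6$. Going through the classification of finite subgroups of $\mathrm{O}(3)$ just listed, one enumerates $L_6$ explicitly: the cyclic and dihedral-type families $C_n, S_n, C_{nh}, C_{nv}, D_n, D_{nh}, D_{nd}$ for $n\le 6$ (with $S_n$ for $n\le 12$ but only those with $n\le 12$ having rotation order $\le 6$, so $n\in\{1,2,3,4,5,6\}$ effectively, plus $S_8$ and $S_{12}$ as abstract order-$8$ and $12$ groups whose underlying rotations have order $4$ and $6$), together with the polyhedral groups $T, T_d, T_h, O, O_h, I, I_h$ that survive the rotation-order restriction—i.e. all of $T, T_d, T_h, O, O_h$ but \emph{not} $I$ or $I_h$ (order-$5$ axes).

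Next I would compute, for each $G\in L_6$, the number $\Omega(|G|)$ of prime factors of $|G|$ counted with multiplicity (equivalently bound the tower height $m(G)$), and split $L_6$ into the groups with $\Omega\le 3$ and those with $\Omega\ge 4$. For the groups with $\Omega(|G|)\le 3$, the Tower Bound (Theorem~\ref{thm:tower}) immediately gives regularity from $N(2(\Omega+2)R)=1$, hence certainly from $N(10R)=1$ since $2(\Omega+2)R\le 10R$. This disposes of all groups of order at most $8$ that are not $\{1\}$-heavy, and more: one checks that the only groups in $L_6$ with $\Omega(|G|)\ge 4$, i.e. $|G|\in\{16,24,48,60,\dots\}$ with at least four prime factors, are $D_{4d}$, $D_{6d}$ (order $24$, $\Omega=4$), $D_{6h}$ (order $24$), $T$, $T_d$, $T_h$, $O$, $O_h$, together with $S_8$ which has $|S_8|=8$, $\Omega=3$ so it is actually handled by the tower bound directly but is also killed outright by Theorem~\ref{thm:s8}. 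So the residual list needing special treatment is short.

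Then I would invoke the dedicated theorems for exactly these residual groups: Theorem~\ref{thm:antipodal} (locally antipodal sets are regular) handles every $G\in L_6$ containing the central symmetry $-I$, which covers $C_{nh}$ with $n$ even, $S_n$ with $n\equiv 2\pmod 4$, $D_{nh}$ with $n$ even, $D_{nd}$ with $n$ odd, $T_h$, $O_h$, and any other central group—in particular $D_{6h}$ and $D_{6d}$; Theorem~\ref{thm:n=6} handles every $G\in L_6$ possessing a $6$-fold rotation axis, covering $C_6, C_{6h}, C_{6v}, D_6, D_{6h}, D_{6d}, S_{12}, \dots$; Theorems~\ref{thm:cube} and~\ref{thm:tetrahedron} handle $O, O_h, T, T_d$ (and $T_h$ via Theorem~\ref{thm:antipodal}); Theorem~\ref{thm:icosahedron} was already used to exclude $I$ and $I_h$ from $L_6$; and finally Theorem~\ref{thm:s8} excludes $S_8$ and $D_{4d}$ as possible $2R$-cluster groups altogether. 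In each surviving case either the group cannot occur, or regularity follows from one of these theorems under a hypothesis no stronger than $N(10R)=1$ (indeed usually $N(2R)=1$ or $N(6R)=1$ suffices). The main obstacle—and the reason the paper exists—is that this bookkeeping is only complete once $D_{4d}$ and $S_8$ are eliminated (Theorem~\ref{thm:s8}) and the $6$-fold-axis case is fully proved (Theorem~\ref{thm:n=6}); with those in hand the final theorem is a verification that the stated special results, applied to the finitely many groups of $L_6$ with large tower height, together with the Tower Bound for the rest, cover all of $L_6$, whence every Delone set in $\mathbb{R}^3$ with $N(10R)=1$ is regular and $\hat\rho_3\le 10R$. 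I would present this as a table: for each $G\in L_6$, list $|G|$, $\Omega(|G|)$, and the theorem invoked, and note the worst radius required is $10R$ (attained, e.g., for the tower-bound groups with $\Omega=3$, giving $2\cdot 5\cdot R$).
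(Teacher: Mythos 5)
Your proposal is correct and follows essentially the same route as the paper: a case analysis over the classification of finite subgroups of $\mathrm{O}(3)$, applying the Tower Bound whenever $\Omega(|G|)\le 3$ (which already yields regularity at $2(\Omega+2)R\le 10R$) and the dedicated theorems (Theorem~\ref{thm:antipodal} for central groups, Theorem~\ref{thm:n=6} for $6$-fold axes, Theorems~\ref{thm:icosahedron}--\ref{thm:tetrahedron} for the polyhedral groups, and Theorem~\ref{thm:s8} to exclude $S_8$ and $D_{4d}$) for the rest. The only quibble is bookkeeping: your list of groups with $\Omega\ge 4$ omits $D_{4h}$ (order $16$) and includes $T$ (order $12$, $\Omega=3$), but both are covered by the theorems you cite, so nothing is lost.
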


\begin{proof}
Let $S:=S_X(2R)$ be the symmetry group of a $2R$-cluster of $X$. Then $S$ is one of the groups from the list above, so it is enough to consider each case separately. All groups except for $D_{4d}$ were already properly treated in~\cite{Dol2018}, but for the sake of completeness we repeat the main arguments for each group in the list. Note that the application of the previous theorems often requires the condition $N(2R)=1$, or even $N(2jR)=1$ for $j=2$, $3$ or $4$, to hold for $X$, but all these conditions are satisfied by our assumption on $X$ that $N(10R)=1$. 

{\bf Case 1.} $S= C_n$ for some $n$. From Theorem~\ref{thm:rotation} (or \ref{thm:rotation2}) we conclude that $n\leq 6$. Moreover, if $n=6$, then $X$ is regular. The orders of the groups $C_2$, $C_3$, $C_4$, and $C_5$ are 2, 3, 4, and 5 respectively, and each is a product of at most two primes, so the tower bound of Theorem~\ref{thm:tower} implies that $X$ is regular as $N(8R)=1$. 

{\bf Case 2.} $S= S_n$ for some $n$. If $n$ is odd, then $S_n$ contains an $n$-fold rotation about a line, so $n\leq 6$ by Theorem~\ref{thm:rotation}. If $n=1$, $3$, or $5$, the group orders are 2, 6, and 10 respectively, so the tower bound of Theorem~\ref{thm:tower} again implies that $X$ is regular as $N(8R)=1$. This settles the case of odd~$n$. If $n$ is even, then $S_n$ contains an $\frac{n}{2}$-fold rotation, so $n\leq 12$. In this case, if $n$ is not a multiple of~4 (groups $S_2$, $S_6$, and $S_{10}$), then $S_n$ contains a central symmetry and thus,  by Theorem~\ref{thm:antipodal}, $X$ is regular (as $N(2R)=1$). The group $S_{12}$ contains a 6-fold rotation, so then $X$ is regular by Theorem~\ref{thm:n=6}. The group $S_8$ can be eliminated by Theorem~\ref{thm:s8}, as it cannot occur as the symmetry group of a $2R$-cluster as $N(2R)=1$. Finally, the order of the group $S_4$ is 4, so the tower bound of Theorem~\ref{thm:tower} shows again that $X$ is regular as $N(8R)=1$.

{\bf Case 3.} $S= C_{nh}$. The case of odd $n$ was covered in Case 2, since $C_{nh}=S_n$ if $n$ is odd. If $n$ is even, then $C_{nh}$ contains a central symmetry, so again $X$ is regular by Theorem~\ref{thm:antipodal}. 

{\bf Case 4.} $S=C_{nv}$. The group $C_{nv}$ contains an $n$-fold rotation about a line, so Theorem~\ref{thm:rotation} gives $n\leq 6$ and Theorem~\ref{thm:n=6} shows that $X$ is regular if $n=6$. The group orders of $C_{1v}$, $C_{2v}$, $C_{3v}$, $C_{4v}$, and $C_{5v}$ are 2, 4, 6, 8, and 10 respectively, and hence are products of at most three primes. Now the tower bound of Theorem~\ref{thm:tower} shows that $X$ is regular as $N(10R)=1$. 

{\bf Case 5.} $S= D_n$. This case is similar to Case 4. The group $D_n$ contains an $n$-fold rotation about a line, so Theorem~\ref{thm:rotation} shows that $n\leq 6$, and then Theorem~\ref{thm:n=6} that if $n=6$ then $X$ is regular. The group orders of $D_1$, $D_2$, $D_3$, $D_4$, and $D_5$ are 2, 4, 6, 8, and 10 respectively, and hence are products of at most three primes. Again the tower bound shows that $X$ is regular as  $N(10R)=1$.

{\bf Case 6.} $S= D_{nh}$. The group $D_{nh}$ contains an $n$-fold rotation about a line, so $n\leq 6$ by Theorem~\ref{thm:rotation}. If $n$ is even, then $D_{nh}$ also contains a central symmetry, so X is regular by Theorem~\ref{thm:antipodal}. If $n$ is odd, then $n$ is 1, 3, or 5, and the corresponding group orders 4, 12, and 20 are products of at most three primes. In all cases the tower bound of Theorem~\ref{thm:tower} implies that $X$ is regular as $N(10R)=1$.

{\bf Case 7.} $S= D_{nd}$. The group $D_{nd}$ also contains an $n$-fold rotation about a line, so $n\leq 6$, and if $n=6$ then $X$ is regular. If $n$ is odd, then $D_{nh}$ contains a central symmetry and therefore $X$ is regular by Theorem~\ref{thm:antipodal}. The order of $D_{2d}$ is 8, so the tower bound of Theorem~\ref{thm:tower} implies that $X$ is regular. The final group $D_{4d}$ does not occur by Theorem~\ref{thm:s8}. 

{\bf Cases 8.} If $S=T$ or $S= T_d$, then $X$ is regular by Theorem~\ref{thm:tetrahedron}.

{\bf Case 9.} If $S=T_h$, then $S$ contains a central symmetry, so $X$ is regular by Theorem~\ref{thm:antipodal}.

{\bf Case 10.} If $S=O$ or $S=O_h$, then $X$ is regular by Theorem~\ref{thm:cube}.

{\bf Case 11.} The cases of $S=I$ or $S= I_h$ are impossible by Theorem~\ref{thm:icosahedron}.

This concludes the proof of the theorem.
\end{proof}

\section{Concluding remarks}

In this section, we once again revisit the list $L_6$ of all finite subgroups of $\rm{O}(3)$ that only contain a rotation of order at most 6, and list in Table~\ref{tab:groups} the best known upper bound for the regularity radius for each group on $L_6$ that might occur as $2R$-cluster group $S:=S_X(2R)$ in a Delone set $X$ with $N(2R)=1$. We also mention for a given subgroup if it has been shown not to occur as the $2R$-cluster group of a Delone set $X$ with $N(2R)=1$.
\smallskip

\begin{center}
\begin{longtable}{| p{0.1\textwidth} | p{0.1\textwidth} | p{0.2\textwidth} | p{0.4\textwidth} |}
\hline
Group & Order & Regularity radius & Reference\\\hline
$C_1$ & 1 & $4R$ & Tower bound (Theorem \ref{thm:tower})\\ \hline
$C_2$ & 2 & $6R$ & Tower bound (Theorem \ref{thm:tower})\\ \hline
$C_3$ & 3 & $6R$ & Tower bound (Theorem \ref{thm:tower})\\ \hline
$C_4$ & 4 & $8R$ & Tower bound (Theorem \ref{thm:tower})\\ \hline
$C_5$ & 5 & $6R$ & Tower bound (Theorem \ref{thm:tower})\\ \hline
$C_6$ & 6 & $2R$ & Theorem \ref{thm:rotation}\\ \hline

$S_1$ & 2 & $6R$ & Tower bound (Theorem \ref{thm:tower})\\ \hline
$S_2$ & 2 & $2R$ & Theorem \ref{thm:antipodal}\\ \hline
$S_3$ & 6 & $8R$ & Tower bound (Theorem \ref{thm:tower})\\ \hline
$S_4$ & 4 & $8R$ & Tower bound (Theorem \ref{thm:tower})\\ \hline
$S_5$ & 10 & $8R$ & Tower bound (Theorem \ref{thm:tower})\\ \hline
$S_6$ & 6 & $2R$ & Theorem \ref{thm:antipodal}\\ \hline
$S_8$ & 8 & Impossible & Theorem \ref{thm:s8}\\ \hline
$S_{10}$ & 10 & $2R$ & Tower bound (Theorem \ref{thm:tower})\\ \hline
$S_{12}$ & 12 & $2R$ & Theorem \ref{thm:rotation}\\ \hline

$C_{1h}$ & 2 & $6R$ & This is the group $S_1$.\\ \hline
$C_{2h}$ & 4 & $2R$ & Theorem \ref{thm:antipodal}\\ \hline
$C_{3h}$ & 6 & $8R$ & This is the group $S_3$.\\ \hline
$C_{4h}$ & 8 & $2R$ & Theorem \ref{thm:antipodal}\\ \hline
$C_{5h}$ & 10 & $8R$ & This is the group $S_5$.\\ \hline
$C_{6h}$ & 12 & $2R$ & Theorem \ref{thm:antipodal}\\ \hline

$C_{1v}$ & 2 & $6R$ & Tower bound (Theorem \ref{thm:tower})\\ \hline
$C_{2v}$ & 4 & $8R$ & Tower bound (Theorem \ref{thm:tower})\\ \hline
$C_{3v}$ & 6 & $8R$ & Tower bound (Theorem \ref{thm:tower})\\ \hline
$C_{4v}$ & 8 & $10R$ & Tower bound (Theorem \ref{thm:tower})\\ \hline
$C_{5v}$ & 10 & $8R$ & Tower bound (Theorem \ref{thm:tower})\\ \hline
$C_{6v}$ & 12 & $2R$ & Theorem \ref{thm:rotation}\\ \hline

$D_{1}$ & 2 & $6R$ & Tower bound (Theorem \ref{thm:tower})\\ \hline
$D_{2}$ & 4 & $8R$ & Tower bound (Theorem \ref{thm:tower})\\ \hline
$D_{3}$ & 6 & $8R$ & Tower bound (Theorem \ref{thm:tower})\\ \hline
$D_{4}$ & 8 & $10R$ & Tower bound (Theorem \ref{thm:tower})\\ \hline
$D_{5}$ & 10 & $8R$ & Tower bound (Theorem \ref{thm:tower})\\ \hline
$D_{6}$ & 12 & $2R$ & Theorem \ref{thm:rotation}\\ \hline

$D_{1h}$ & 4 & $8R$ & Tower bound (Theorem \ref{thm:tower})\\ \hline
$D_{2h}$ & 8 & $2R$ & Theorem \ref{thm:antipodal}\\ \hline
$D_{3h}$ & 12 & $10R$ & Tower bound (Theorem \ref{thm:tower})\\ \hline
$D_{4h}$ & 16 & $2R$ & Theorem \ref{thm:antipodal}\\ \hline
$D_{5h}$ & 20 & $10R$ & Tower bound (Theorem \ref{thm:tower})\\ \hline
$D_{6h}$ & 24 & $2R$ & Theorem \ref{thm:antipodal}\\ \hline

$D_{1d}$ & 4 & $2R$ & Theorem \ref{thm:antipodal}\\ \hline
$D_{2d}$ & 8 & $10R$ & Tower bound (Theorem \ref{thm:tower})\\ \hline
$D_{3d}$ & 12 & $2R$ & Theorem \ref{thm:antipodal}\\ \hline
$D_{4d}$ & 16 & Impossible & Theorem \ref{thm:s8}\\ \hline
$D_{5d}$ & 20 & $2R$ & Theorem \ref{thm:antipodal}\\ \hline
$D_{6d}$ & 24 & $2R$ & Theorem \ref{thm:rotation}\\ \hline

$T$ & 12 & Impossible & Theorem \ref{thm:tetrahedron}\\ \hline
$T_d$ & 24 & $2R$ & Theorem \ref{thm:tetrahedron}\\ \hline
$T_h$ & 48 & $2R$ & Theorem \ref{thm:antipodal}\\ \hline
$O$ & 24 & Impossible & Theorem \ref{thm:cube}\\ \hline
$O_h$ & 48 & $2R$ & Theorem \ref{thm:antipodal}\\ \hline
$I$ & 60 & Impossible & Theorem \ref{thm:icosahedron}\\ \hline
$I_h$ & 120 & Impossible & Theorem \ref{thm:icosahedron}\\ \hline

\caption{Possible symmetry groups of $2R$-clusters ($2R$-cluster groups), with corresponding bounds for the regularity radius.} 
\label{tab:groups}
\end{longtable}
\end{center}

From Table~\ref{tab:groups} we can see that any improvement of the $10R$-bound to $8R$ by similar methods would involve an analysis of the five groups $C_{4v}$, $D_4$, $D_{3h}$, $D_{5h}$, $D_{2d}$ and their possible occurrence as $2R$-cluster groups of a Delone set $X$ with $N(X)=1$. The main difference to our treatment of $S_8$ and $D_{4d}$ would lie in the fact that some of these five groups can be realized as the $2R$-cluster groups  of a regular system $X$. For example, the group $C_{4v}$ is the $2R$-cluster group $S_X(2R)$ of the regular system  
$$X:=\left\{(x,y,z)\in\mathbb{Z}^3\mid z\not\equiv 0\bmod 3\right\}.$$
This means that for $C_{4v}$ a result similar to Theorem~\ref{thm:s8} for $S_8$ and $D_{4d}$ is impossible.  Different types of arguments, probably similar to those of the proof of Theorem~\ref{thm:cube}, are needed to deal with this case.

Another important issue is the existence of $5$-fold rotations as local or global symmetries. It is known that a regular set cannot possess a global $5$-fold rotation, but a similar {\it local\/} restriction has not yet been established. Shtogrin's Theorem~\ref{thm:rotation} says that rotations of order greater than $6$ are forbidden in a Delone set with $N(2R)=1$, but local $5$-fold symmetry still is theoretically possible. Nevertheless, we conjecture that in $\R^3$ neither a regular set nor a non-regular set with $N(2R)=1$ can have local $5$-fold symmetry.

\begin{conjecture}
There is no Delone set $X\subset \mathbb R^3$ such that $N(2R)=1$ and $S_X(2R)$ contains a rotation of order $5$.
\end{conjecture}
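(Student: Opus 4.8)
The plan is to follow the template of Theorem~\ref{thm:s8}, where $S_8$ and $D_{4d}$ were eliminated by purely local arguments using only the hypothesis $N(2R)=1$, and to produce an analogous contradiction for every group on $L_6$ containing a $5$-fold rotation, namely $C_5$, $S_5=C_{5h}$, $C_{5v}$, $D_5$, $S_{10}$, $D_{5h}$, and $D_{5d}$ (the icosahedral groups $I,I_h$ being already excluded by Theorem~\ref{thm:icosahedron}). At the outset it must be stressed why this is genuinely harder than the case $n\ge 7$ treated in Theorem~\ref{thm:rotation2}: a regular $n$-gon of circumradius $\rho$ has side length $2\rho\sin(\pi/n)$, which is strictly less than $\rho$ precisely when $n\ge 7$. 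Since $2\sin(\pi/5)>1$, the edges of a pentagonal shell of radius $r_1$ are \emph{longer} than $r_1$, so no contradiction can be read off a single cluster. Exactly as for $S_8$ (where $2\sin(\pi/8)<1$ still did not suffice, because of the antiprism's two bases), the contradiction must instead arise from the forced interaction of two overlapping clusters centered at mutually nearest points.

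First I would normalize so that the smallest interpoint distance is $r_1$ and, assuming a $5$-fold axis $\ell$ of $S_{\mathbf x}(2R)$ through a point $\mathbf x\in X$, analyze the nearest-neighbor shell $H_{\mathbf x}(r_1)$. Under the subgroup generated by the $5$-fold rotation, every off-axis orbit consists of the vertices of a regular pentagon lying in a plane perpendicular to $\ell$; together with the remaining generators of the group at hand, the off-axis nearest neighbors form a regular pentagon, a pentagonal prism, or a pentagonal antiprism centered at $\mathbf x$. The kissing-number bound $12$ in $\R^3$ (as in Case~1 of Theorem~\ref{thm:s8}) restricts the shell to at most two such pentagonal orbits plus at most two points on $\ell$. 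One then splits into the same three cases as in Theorem~\ref{thm:s8} --- all nearest neighbors off $\ell$, a mixture, or all on $\ell$ --- and in the mixed case uses the directed-graph counting argument of that proof (the off-axis ``red'' arrows outnumbering the on-axis ``blue'' arrows) to locate a pair of mutually nearest points $\mathbf x',\mathbf y'$ both carrying the full off-axis pentagonal structure.

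The heart of the argument is then a pentagonal analogue of Lemmas~\ref{lem:antiprisms} and~\ref{lem:antiprisms2}. By the equivalence of the $2R$-clusters at $\mathbf x'$ and $\mathbf y'$, the congruent pentagonal configurations $P_{\mathbf x'}$ and $P_{\mathbf y'}$ share the pair $\{\mathbf x',\mathbf y'\}$: here $\mathbf y'$ is a vertex of $P_{\mathbf x'}$ and $\mathbf x'$ a vertex of $P_{\mathbf y'}$. I would classify the relative position of the two $5$-fold axes $\ell_{\mathbf x'}$ and $\ell_{\mathbf y'}$ --- coincident, parallel, perpendicular, or oblique --- exactly as in the subcases of Theorem~\ref{thm:n=6} and Theorem~\ref{thm:s8}, and in each configuration exhibit one vertex of $P_{\mathbf x'}$ and one vertex of $P_{\mathbf y'}$ at distance strictly less than $r_1$, contradicting minimality. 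For the coincident and parallel configurations the $72^\circ$ angular offset between the two pentagons (which cannot be reconciled with any common lattice) should force such a short diagonal. The centrally symmetric groups $S_{10}$ and $D_{5d}$, which cannot be excluded by Theorem~\ref{thm:antipodal} since that theorem yields regularity rather than non-existence, are covered here as well, because they too inherit this pentagonal shell.

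The main obstacle I anticipate is the oblique-axis configuration together with the genuinely planar pentagon case, where the pentagon edges exceed $r_1$ and the decisive short distance is a \emph{cross} distance between the two pentagons rather than an internal edge. Here the estimate is delicate: one must show that over the whole range of relative orientations and offsets compatible with $r_1$ being the global minimum, some vertex of $P_{\mathbf x'}$ lies within $r_1$ of some vertex of $P_{\mathbf y'}$. Establishing a uniform lower bound of this type --- perhaps via a compactness argument on the finite-dimensional space of admissible configurations, or a rigorous interval-arithmetic computation --- is precisely the step that has so far resisted a clean proof, and is why the statement remains a conjecture. I expect that exploiting the additional half-turns and mirrors present in the larger groups ($D_5$, $D_{5h}$, $D_{5d}$) will make those cases tractable, leaving the bare cyclic groups $C_5$ and $C_{5v}$, which impose the fewest constraints on the second axis, as the truly critical ones.
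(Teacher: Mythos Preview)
The paper does not prove this statement at all: it is presented in the concluding remarks as an open \emph{conjecture}, prefaced by the sentence ``Nevertheless, we conjecture that in $\R^3$ neither a regular set nor a non-regular set with $N(2R)=1$ can have local $5$-fold symmetry.'' There is therefore no paper proof to compare against. Your proposal is, appropriately, not a proof either but a strategy sketch, and you yourself flag the decisive missing step---the uniform ``short cross-distance'' estimate in the oblique-axis configuration---as ``precisely the step that has so far resisted a clean proof, and is why the statement remains a conjecture.'' In that sense your write-up is consistent with the paper's own stance.

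A couple of remarks on the sketch itself. First, your handling of $S_{10}$ and $D_{5d}$ glosses over a shortcut: since these groups contain the central symmetry, Theorem~\ref{thm:antipodal} forces $X$ to be a regular system, and then one only needs to argue that no \emph{regular} system can have a $5$-fold rotation in its $2R$-cluster group; this is a separate (and possibly easier) question from the non-regular case, and need not be folded into the antiprism-collision machinery. Second, your kissing-number count allows exactly two pentagonal orbits plus two axial points, i.e.\ twelve nearest neighbours, which is precisely the icosahedral configuration; so the kissing bound by itself does not trim the case analysis as sharply here as it did for $S_8$, and the icosahedral shell (already covered by Theorem~\ref{thm:icosahedron}) will reappear as a boundary case. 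Finally, the heuristic that ``the $72^\circ$ angular offset cannot be reconciled with any common lattice'' is suggestive but does no work at the local level you are operating at; any genuine argument in the coincident/parallel subcases will still have to produce an explicit short distance, and that is part of the same unresolved estimate.
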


Our final remark addresses the more general question of which finite groups actually occur as $2R$-cluster groups in a three-dimensional Delone set with $N(2R)=1$. For a finite subgroup~$G$ of $\rm{O}(3)$ there are three mutually exclusive possibilities.

\begin{enumerate}
\item The group $G$ cannot occur as the $2R$-cluster group of a Delone set $X$ with $N(2R)=1$.
\item The group $G$ occurs as the $2R$-cluster group of a Delone set $X$ with $N(2R)=1$, but only for a regular set $X$.
\item The group $G$ occurs as the $2R$-cluster group of $X$ with $N(2R)=1$ for a non-regular set $X$.
\end{enumerate}
The last case can serve as a source for lower bounds for the regularity radius $\hat\rho_3$. In particular, for every such group $G$ we can ask for the supremum of all numbers $c\geq 2$ such that there is a non-regular set $X$ with $N(cR)=1$ and with $G$ as the $2R$-cluster group of $X$. The example of Engel sets from \cite{LowerBound} shows that for the group $S_1$ (the group generated by a single mirror symmetry) the supremum is $6$, which happens to give the best known lower bound $\hat\rho_3\geq 6R$.

\appendix

\section{Computational details}
\label{appendix}

\begin{lemma}\label{lem:antiprisms}
In the notation of {\bf Case 1} of the proof of Theorem~$\ref{thm:s8}$, there exist two different vertices of the antiprisms $P_{\mathbf x}$ and $P_{\mathbf y}$ that are at distance less than $r_1$.
\end{lemma}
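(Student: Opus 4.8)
The plan is to reduce the statement to an explicit geometric computation about two ``regular'' square antiprisms sharing a vertex-pair at distance $r_1$, and then extract the needed short distance.

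\medskip

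\textbf{Setup and normalization.} First I would recall the combinatorial structure from Case 1: $P_{\mathbf x}$ has vertex set $\langle\sigma\rangle\cdot\mathbf y$, which is a right square antiprism with $8$ vertices, $4$ in a top square and $4$ in a bottom square, each vertex at distance $r_1$ from the center $\mathbf x$, and with the top square rotated by $\frac\pi4$ relative to the bottom one. Since regularity (up to scaling) is the issue, I would set $r_1=1$ throughout. The key numerical fact I would establish first is that a unit-circumradius square antiprism is \emph{not} free: the ratio between the edge length $a$ of each square base and the ``slant'' edge length $s$ (between adjacent top and bottom vertices) is forced once we demand that all $8$ vertices lie on the unit sphere around $\mathbf x$; more precisely, writing the top square at height $t$ and the bottom square at height $-t$ with $0<t<1$, the base circumradius is $\sqrt{1-t^2}$, so $a=\sqrt{2}\,\sqrt{1-t^2}$, and $s^2 = (1-t^2)(2-\sqrt2) + (2t)^2$. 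Note that as a point set $P_{\mathbf x}$ still has one degree of freedom (the parameter $t$), so the lemma must hold uniformly in $t$.

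\medskip

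\textbf{Placing the second antiprism.} Because the $2R$-cluster at $\mathbf y$ is equivalent to that at $\mathbf x$, there is a congruent copy $P_{\mathbf y}$ of $P_{\mathbf x}$ centered at $\mathbf y$ (same parameter $t$, up to the sign ambiguity of which base is ``top''), and since $|\mathbf{xy}|=1=r_1$, the point $\mathbf x$ is one of the $8$ vertices of $P_{\mathbf y}$. So $P_{\mathbf y}$ is obtained from $P_{\mathbf x}$ by an isometry sending the center $\mathbf x$ to one vertex $\mathbf y$ of $P_{\mathbf x}$ and sending some vertex $\mathbf x$ of $P_{\mathbf y}$ back to $\mathbf x$. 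The plan is to exploit the rotational symmetry: I may rotate about the axis $\ell$ of $P_{\mathbf x}$ so that $\mathbf y$ is a fixed top vertex, and then $P_{\mathbf y}$ is determined by an element of $\mathrm{O}(3)$ fixing $\{\mathbf x,\mathbf y\}$-distance — i.e. by the choice of which of the $8$ vertices of $P_{\mathbf y}$ coincides with $\mathbf x$, plus a rotation of $P_{\mathbf y}$ about the line $\overline{\mathbf x\mathbf y}$. I would argue that after fixing which vertex of $P_{\mathbf y}$ maps to $\mathbf x$, there is still a one-parameter family (rotation angle $\varphi$ about $\overline{\mathbf x\mathbf y}$), plus the parameter $t$; so the claim to be proved is: for every admissible $(t,\varphi)$ and every vertex-identification, two distinct vertices among the $16$ (really $14$, since $\mathbf x$ and $\mathbf y$ are shared/counted once) are at distance $<1$.

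\medskip

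\textbf{The estimate.} The clean way to finish is to find, among the $8$ vertices of $P_{\mathbf y}$, one that is forced to be close to some vertex of $P_{\mathbf x}$ regardless of $\varphi$. The natural candidate is the vertex $\mathbf y^{*}$ of $P_{\mathbf y}$ \emph{antipodal through $\mathbf y$} to $\mathbf x$ — but $P_{\mathbf y}$ is not centrally symmetric, so instead I would track the vertex $\mathbf y'$ of $P_{\mathbf y}$ lying in the same base as $\mathbf x$ and adjacent to it: $|\mathbf y'\mathbf y|=1$ and $|\mathbf x\mathbf y'|=a$ or $s$ depending on the combinatorial position, and $\mathbf y'$ sweeps (as $\varphi$ varies) a circle of radius $\sqrt{1-t^2}$ about $\overline{\mathbf x\mathbf y}$ centered at a point near $\mathbf x$; comparing this circle's position to the locations of the near vertices of $P_{\mathbf x}$ (which also sit at controlled heights and radii relative to $\overline{\mathbf x\mathbf y}$), a triangle-inequality / law-of-cosines bound shows the minimum over $\varphi$ of the distance from $\mathbf y'$ to the nearest $P_{\mathbf x}$-vertex is $<1$. \textbf{The main obstacle} I anticipate is precisely this: the configuration genuinely depends on the continuous parameters $t\in(0,1)$ and $\varphi$, and on a finite number of discrete placement cases, so one must either (i) run a short but careful case analysis over the possible vertex-to-vertex identifications (how $\mathbf x$ sits in $P_{\mathbf y}$: in which base, and the clocking of $P_{\mathbf y}$), and within each case minimize a distance-squared function — a quadratic trigonometric polynomial in $\varphi$ — over $\varphi$ and check it stays $<1$ for all $t$; or (ii) find a slicker uniform argument, e.g. a volume/packing or averaging argument showing the $16$ points cannot all be pairwise $\ge 1$ apart given that they all lie within distance $2R$-type constraints. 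I would attempt (i) first, since it is bounded in length and self-contained, reserving (ii) only if the case analysis balloons; I expect only a handful of genuinely distinct cases survive the symmetry reduction, each reducing to ``minimize $c_0 + c_1\cos\varphi + c_2\sin\varphi$ and verify the minimum is $<1$ for all admissible $t$,'' which is elementary calculus.
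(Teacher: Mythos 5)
Your setup is essentially the paper's: place $P_{\mathbf x}$ in coordinates as the $\langle\sigma\rangle$-orbit of $\mathbf y$, use the fact that $r_1$ is the minimum interpoint distance of $X$ to constrain the shape of the antiprism, observe that $P_{\mathbf y}$ is a congruent copy with $\mathbf x$ as a vertex and hence determined up to a one-parameter rotation about $\overline{\mathbf{xy}}$, and reduce the lemma to a two-parameter (shape plus clocking) constrained optimization. Two caveats. First, make the shape constraints explicit: minimality of $r_1$ forces both the base edge and the slant edge of the unit-circumradius antiprism to be at least $1$, i.e.\ $a^2\geq b^2$ and $a^2(1-\sqrt2)+3b^2\geq 0$ in the paper's normalization $a^2+b^2=1$; without these the feasible region is wrong and the claim need not hold uniformly in your $t$. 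Second, and more seriously, the step you defer is the entire content of the lemma, and your hoped-for shortcut --- tracking one designated vertex $\mathbf y'$ of $P_{\mathbf y}$ and showing it alone is always within distance $1$ of some vertex of $P_{\mathbf x}$ --- is unlikely to work uniformly: the identity of the closest pair changes as $(t,\varphi)$ vary, so one is genuinely facing a max--min problem over all $8\times 8$ vertex pairs. The paper resolves exactly this by a numerical optimization in Mathematica (the maximum over the feasible region of the minimal inter-antiprism vertex distance, restricted to pairs at least $0.01$ apart to exclude coincidences, is $0.598<1$); a fully by-hand case analysis may be possible but is not carried out here or in the paper, so as written your argument has a gap precisely at its quantitative core.
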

\begin{proof}
Recall that ${\mathbf x}=(0,0,0)$ and $r_1=1$. Without loss of generality we can assume that ${\mathbf y}=(a,0,b)$ with $a^2+b^2=1$ and $a,b\neq 0$. Then the antiprism $P_{\mathbf x}$ can be written as the convex hull of its vertices, 
$$P_{\mathbf x} =\conv \left\{
(\pm a, 0, b),
(0, \pm a, b),
\left(\pm \frac{a}{\sqrt 2}, \pm \frac{a}{\sqrt 2}, -b\right)
\right\},$$
obtained using the action of $\langle \sigma \rangle$ on $\mathbf y$.

Next we exploit the fact that $r_1=1$ is the smallest distance between two points of $X$. Particularly, the two vertices $(a,0,b)$ and $(0,a,b)$ in the same base of $P_{\mathbf x}$ must be at least $1$ apart, and thus 
$$2a^2\geq 1,$$ or equivalently, $a^2\geq b^2$. Similarly, the distance between the two vertices $(a,0,b)$ and $\left(\frac{a}{\sqrt 2}, \frac{a}{\sqrt 2}, -b\right)$ in different bases of $P_\mathbf x$ must be at least $1$, giving $$a^2(1-\sqrt 2)+3b^2\geq 0.$$

Next we will find coordinates for the vertices of $P_\mathbf y$. We know that ${\mathbf x}=(0,0,0)$ is a vertex of $P_\mathbf y$; let $\mathbf z$ be the vertex of $P_\mathbf y$ opposite to $\mathbf x$ in the same base as $\mathbf x$. Suppose $\mathbf z=(a+x,y,b+z)$ for some numbers $x,y,z$, so $\mathbf{yz}=(x,y,z)$. The distance between $\mathbf y$ and $\mathbf z$ must be $r_1=1$, so $$x^2+y^2+z^2=1.$$
Also the scalar product of the vectors $\mathbf{yx}$ and $\mathbf{yz}$ must be equal to the scalar product of the vectors $(a,0,b)$ and $(-a,0,b)$. This turns into the equality 
$$ax+bz=a^2-b^2,$$
which completes our list of conditions for the parameters $a,b,x,y,z$ describing all possible positions of $P_{\mathbf x}$ and $P_\mathbf y$.

The remaining vertices of $P_\mathbf y$ can be computed as follows. The midpoint 
$\mathbf t=\left(\frac{a+x}{2},\frac{y}{2},\frac{b+z}{2}\right)$ 
of the segment $[\mathbf x,\mathbf z]$ is the center of the base of $P_\mathbf y$ that contains $\mathbf x$. The two remaining vertices of this base can be obtained as the endpoints of the vectors 
$$\pm \frac{\mathbf{yx}\times\mathbf{yz}}{2|\mathbf{yt}|},$$ 
placed at $\mathbf t$, where here $\times$ indicates the cross product of vectors. The length of the numerator of this fraction is twice the area of the triangle $\mathbf{xyz}$, and the denominator is twice the height of the same triangle from the vertex $\mathbf y$. Thus, the overall vectors will represent the vectors $\pm\mathbf{tx}$ rotated by $90^\circ$ about the line $\overline{\mathbf{ty}}$. We also note that the denominator is actually the distance from the center of the antiprism $P_\mathbf y$ to its base, which is equal to $|b|$. Thus, the two remaining vertices of $P_\mathbf y$ in this base can be written as 
$$\mathbf t \pm \frac{\mathbf{yx}\times\mathbf{yz}}{2b}\;=\; \left(\frac{a+x}{2},\frac{y}{2},\frac{b+z}{2}\right) \pm \frac{(by,az-bx,-ay)}{2b}.$$

Finally, if we set  
$$\mathbf u := \left(\frac{a+x}{2},\frac{y}{2},\frac{b+z}{2}\right) + \frac{(by,az-bx,-ay)}{2b},$$ 
then the four vertices of the other base of $P_\mathbf y$ can be written as 
$$\mathbf y - \mathbf{yt} + \frac{\pm\mathbf{xt}\pm\mathbf{tu}}{\sqrt2}=\left(\frac{3a-x}{2},-\frac{y}{2},\frac{3b-z}{2}\right)\pm\frac{(a+x,y,b+z)}{2\sqrt 2} \pm \frac{(by,az-bx,-ay)}{2b\sqrt 2}.$$

Now that we know all vertices of the two antiprisms, we can set up the following optimization problem.\\
\indent\indent {\tt Maximize:} minimal distance between vertices of $P_\mathbf x$ and $P_\mathbf y$ that are at least $0.01$ apart. \\
\indent\indent {\tt Under conditions:}
\begin{itemize}
\item $a^2+b^2=1$;
\item $a^2\geq b^2$;
\item $a^2(1-\sqrt 2)+3b^2\geq 0$;
\item $x^2+y^2+z^2=1$;
\item $ax+bz=a^2-b^2$.
\end{itemize}
The (numerical) computations \cite{computations} using Wolfram Mathematica \cite{WM} show that the maximal value of the corresponding function is $0.598$, which means that there is a vertex of $P_\mathbf x$ and a vertex of $P_\mathbf y$ that are at distance at least $0.01$ but are closer than $r_1=1$.
\end{proof}

\begin{lemma}\label{lem:antiprisms2}
In the notation of {\bf Case 3} of the proof of Theorem $\ref{thm:s8}$, if the point $\mathbf z$ does not lie in the perpendicular bisector of the segment $[\mathbf x,\mathbf y]$, then there exist two vertices of the antiprism $P'_{\mathbf y}$ in the same base that are closer to $\mathbf z$ than $r_2$.
\end{lemma}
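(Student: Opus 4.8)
The plan is to run a coordinate computation in the spirit of Lemma~\ref{lem:antiprisms}, but organized around a chain of geometric reductions that, in this case, confine the free parameters to a very narrow range. First I would normalize: put $\mathbf{x}=o$, $r_1=1$, let $\ell$ be the $z$-axis and $\Pi=\{z=0\}$, so $\mathbf{y}=(0,0,1)$; after rotating about $\ell$ take $\mathbf{z}=(\rho,0,c)$ with $\rho>0$, and $c>0$ since $\mathbf{z}$ lies on the side of $\Pi$ containing $\mathbf{y}$; set $r_2^2=\rho^2+c^2$. The vertices of $P'_{\mathbf{x}}$ are then $\bigl(\rho\cos\tfrac{k\pi}{4},\rho\sin\tfrac{k\pi}{4},(-1)^k c\bigr)$, $k=0,\dots,7$. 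The first observation to record is that the $2R$-cluster group at $\mathbf{y}$ again has axis $\ell$: $\mathbf{x}\in H_{\mathbf{y}}(r_1)$, so by equivalence of $2R$-clusters $\mathbf{x}$ lies on that axis, which therefore equals $\overline{\mathbf{xy}}=\ell$. Hence $P'_{\mathbf{y}}$ is a regular antiprism congruent to $P'_{\mathbf{x}}$, centered at $\mathbf{y}$ with axis $\ell$, so its two square bases lie in the planes $z=1-c$ and $z=1+c$; only the rotational phase of $P'_{\mathbf{y}}$ about $\ell$ is undetermined, and the argument must be phase-independent. Write $B$ for the base of $P'_{\mathbf{y}}$ in the plane $z=1-c$.

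The core of the argument is three reductions. (i) If $c>\tfrac12$ then $|\mathbf{zy}|^2=\rho^2+(1-c)^2<\rho^2+c^2=r_2^2\le (2R)^2$, so $\mathbf{z}\in C_{\mathbf{y}}(2R)$ would be an off-axis point at distance $<r_2$ from $\mathbf{y}$, contradicting that the minimal distance from $\mathbf{y}$ to an off-axis point of its $2R$-cluster equals $r_2$; as $\mathbf{z}$ is not on the bisector, $0<c<\tfrac12$. (ii) The two vertices $\mathbf{w}_1,\mathbf{w}_2$ of the lower base of $P'_{\mathbf{x}}$ adjacent to $\mathbf{z}$ both lie at distance $\sqrt{(2-\sqrt2)\rho^2+4c^2}$ from $\mathbf{z}$ and form a non-degenerate triangle with $\mathbf{z}$, so they cannot both lie on the axis of the cluster at $\mathbf{z}$; since all points of $X$ nearer to $\mathbf{z}$ than $r_2$ lie on that axis, this distance is $\ge r_2$, i.e. $\rho^2\le 3(\sqrt2+1)c^2$. (iii) Every vertex $\mathbf{u}$ of $B$ is a point of $X$ distinct from $\mathbf{z}$ (their heights differ because $c\neq\tfrac12$), so $|\mathbf{zu}|\ge 1$; writing $\theta$ for the azimuthal angle between $\mathbf{z}$ and $\mathbf{u}$, this gives $2\rho^2(1-\cos\theta)+(1-2c)^2\ge 1$, i.e. $\cos\theta\le\kappa:=1-\frac{2c(1-c)}{\rho^2}$. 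The four vertices of $B$ are $90^\circ$ apart, so the one closest in azimuth to $\mathbf{z}$ has $\theta\le 45^\circ$, whence $\tfrac{1}{\sqrt2}\le\kappa$ and therefore $\rho^2\ge (4+2\sqrt2)\,c(1-c)$. Combining (ii) and (iii) forces $c\ge\frac{4+2\sqrt2}{7+5\sqrt2}>0.48$, so $c\in[0.48,\tfrac12)$ and $\rho^2$ lies in the correspondingly thin band $\bigl[(4+2\sqrt2)c(1-c),\,3(\sqrt2+1)c^2\bigr]$. I expect step (iii) to be the main obstacle to locate: the point that forcing all of $B$ to stay at distance $\ge r_1$ from $\mathbf{z}$ both constrains the phase and yields the essential lower bound on $\rho^2$ is what makes the whole thing go through; the rest is bookkeeping.

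Finally I would conclude using $B$. Let $\mathbf{u}_1,\mathbf{u}_2$ be the two vertices of $B$ closest in azimuth to $\mathbf{z}$, with azimuthal angles $\theta_1\le 45^\circ$ and $\theta_2=90^\circ-\theta_1$. One must check $|\mathbf{zu}_i|^2=2\rho^2(1-\cos\theta_i)+(1-2c)^2<\rho^2+c^2$, equivalently $\cos\theta_i>\tfrac12-\frac{(3c-1)(1-c)}{2\rho^2}$, whose right-hand side is strictly below $\tfrac12$ since $3c>1$ on the range just obtained. For $i=1$ this is immediate from $\cos\theta_1\ge\tfrac{1}{\sqrt2}>\tfrac12$. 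For $i=2$, step (iii) gives $\theta_1\ge\arccos\kappa$, hence $\theta_2\le\arcsin\kappa$ and $\cos\theta_2\ge\sqrt{1-\kappa^2}$; as $\sqrt{1-\kappa^2}$ is decreasing and $\tfrac12-\frac{(3c-1)(1-c)}{2\rho^2}$ increasing in $\rho^2$, it suffices to verify the inequality at $\rho^2=3(\sqrt2+1)c^2$ for $c\in[0.48,\tfrac12)$, a short numerical check (the left side stays near $0.70$, the right side near $0.43$). This exhibits two vertices $\mathbf{u}_1,\mathbf{u}_2$ of the base $B$ of $P'_{\mathbf{y}}$ at distance less than $r_2$ from $\mathbf{z}$, which is the assertion of the lemma. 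As a fallback, exactly as in Lemma~\ref{lem:antiprisms}, one can instead encode the constraints (i)--(iii) and the target inequality and confirm it by a single numerical optimization.
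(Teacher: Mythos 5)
Your argument is correct, and it reaches the conclusion by a genuinely different route than the paper. The paper keeps the full parameter set free (subject only to the distance constraints inside $P'_{\mathbf x}$ and the congruence of $P'_{\mathbf y}$), takes the two vertices of the near base of $P'_{\mathbf y}$ bracketing the azimuth of $\mathbf z$, and runs one numerical optimization in five variables showing $|\mathbf{zu}_1|+|\mathbf{zu}_2|<r_1+r_2$; since each distance is at least $r_1$, both must be below $r_2$ — an elegant trick that avoids deciding which vertex is closer and never uses your step (iii). You instead import the extra constraint that \emph{every} vertex of $B$ is at distance at least $r_1$ from $\mathbf z$, which both pins the azimuthal phase ($\cos\theta_1\le\kappa$) and yields the lower bound $\rho^2\ge(4+2\sqrt2)c(1-c)$; together with the upper bound $\rho^2\le 3(\sqrt2+1)c^2$ coming from the antiprism at $\mathbf x$ (the same constraint the paper records as $a^2(1-\sqrt2)+3b^2\ge 0$), this squeezes $c$ into $[6\sqrt2-8,\tfrac12)$ and reduces the verification to a one-variable check backed by explicit monotonicity in $\rho^2$. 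I checked the reductions: $\frac{4+2\sqrt2}{7+5\sqrt2}=6\sqrt2-8\approx 0.485$, and at the worst case $\rho^2=3(\sqrt2+1)c^2$ one gets $\sqrt{1-\kappa^2}$ between roughly $0.69$ and $0.71$ against a right-hand side near $0.43$ across the whole range, so both $\mathbf u_1$ and $\mathbf u_2$ land strictly inside the $r_2$-ball. What the paper's route buys is brevity and phase-freeness via the sum trick; what yours buys is transparency — the final numerical step is a one-variable inequality rather than a black-box constrained optimization, and the structural reason the lemma holds (the thinness of the admissible band for $(c,\rho^2)$) becomes visible. Both proofs ultimately lean on a numerical verification, so neither is more rigorous than the other on that score.
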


\begin{proof}
Recall that $\mathbf {\mathbf x}=(0,0,0)$ and $r_1=1$. Without loss of generality we can assume $\mathbf y=(0,0,1)$. We can also assume that $\mathbf z=(a,0,b)$ with $r_2^2=a^2+b^2>1$ and $a\geq 0$, and that $b>0$, because the base of $P'_{\mathbf x}$ that contains $\mathbf z$ is closer to $\mathbf y$ than the other base of $P'_{\mathbf x}$. The point $\mathbf z$ is not on the perpendicular bisector of $[\mathbf{x},\mathbf{y}]$, so $b<\frac 12$; otherwise $r_2$ is not the shortest distance from $\mathbf x$ among all points of $C_{\mathbf x}(2R)$ off $\ell$.
 
The distance between the two vertices $\mathbf z=(a,0,b)$ and $(0,a,b)$ of the same base of $P'_\mathbf {\mathbf x}$ should be at least $r_2$, as otherwise the sides of the base meeting at $\mathbf z$ would give a non-collinear triple of points in $C_{\mathbf z}(2R)$ in which two points are at distance $r_1$ from $\mathbf z$, thus contradicting our case assumption. For similar reasons, the two vertices $\mathbf z=(a,0,b)$ and $\left(\frac{a}{\sqrt 2},\frac{a}{\sqrt 2},-b\right)$ of $P'_{\mathbf x}$ in different bases must also be at least $r_2$ apart. These two conditions give the inequalities
$$a^2\geq b^2,\quad a^2(1-\sqrt2)+3b^2\geq 0.$$

Next we look on the $2R$-cluster of $\mathbf y=(0,0,1)$. The point $\mathbf x$ is at distance $r_1=1$ from $\mathbf y$, so the line corresponding to $\ell$ in $C_{\mathbf y}(2R)$ must coincide with the line $\overline{\mathbf{yx}}=\ell$. Therefore, the line $\ell$ is also the axis of rotation of the antiprism $P'_\mathbf y$ and the two bases of this antiprism lie in the planes $z=1-b$ and $z=1+b$. Without loss of generality we can assume that one of the vertices of $P'_\mathbf y$ is a point $\mathbf u_1:=(x,y,1-b)$ with $x,y\geq 0$. Then $\mathbf u_2:=(y,-x,1-b)$ is a vertex of $P'_\mathbf y$ as well. Note that the distance of $\mathbf u_1$ from $\ell$ is equal to the distance of $\mathbf z$ from $\ell$, so $x^2+y^2=a^2$. 

We claim that 
$$|\mathbf{zu}_1|+|\mathbf{zu}_2|<1+\sqrt{a^2+b^2}=r_1+r_2.$$  
In order to establish this inequality we set up the following optimization problem.\\[.02in]
\indent\indent {\tt Maximize:} $|\mathbf{zu}_1|+|\mathbf{zu}_2|-1-\sqrt{a^2+b^2}$ \\
\indent\indent {\tt Under conditions:}
\begin{itemize}
\item $a^2+b^2>1$;
\item $a\geq 0$;
\item $b>0$;
\item $b<\frac 12$;
\item $a^2\geq b^2$;
\item $a^2(1-\sqrt 2)+3b^2\geq 0$;
\item $x^2+y^2=a^2$;
\item $x\geq 0$;
\item $y\geq 0$.
\end{itemize}
The (numerical) computations \cite{computations} using Wolfram Mathematica \cite{WM} show that the maximal value of the corresponding function (the difference of the two sides of the inequality) is $-0.3367$, which means that for all possible situations,
$$|\mathbf{zu}_1|+|\mathbf{zu}_2|<1+\sqrt{a^2+b^2}=r_1+r_2.$$ 
Since the distance between any two points of $X$ is at least $r_1$, both distances $|\mathbf{zu}_1|$ and $|\mathbf{zu}_2|$ are less than $r_2$. Thus $\mathbf u_1$ and $\mathbf u_2$ are vertices of the antiprism $P'_{\mathbf y}$ in the same base that are closer to $\mathbf z$ than $r_2$.
\end{proof}

\end{document}